\newtheorem{theorem}{Theorem}
\newtheorem{lemma}{Lemma}
\newtheorem{remark}{Remark}
\newcommand{\Beta}{\rm{B}}
\newcommand{\bx}{{\bf x}}
\def\be{\begin{equation}}
\def\ee{\end{equation}}
\def\beq{\begin{eqnarray*}}
\def\eeq{\end{eqnarray*}}
\def\bea{\begin{eqnarray}}
\def\eea{\end{eqnarray}}
\def\Var{{\rm Var}}
\def\var{{\rm Var}}
\def\Cov{\rm Cov}
\newcommand{\reals}{{\mathbb R}}
\newcommand{\bbr}{\reals}
\newcommand{\bbz}{{\mathbb Z}}
\newcommand{\bbn}{{\mathbb N}}
\newcommand{\eid}{\overset{d}{=}}
\newcommand{\vep}{\varepsilon}
\newcommand{\one}{{\bf 1}}
\begin{document}

\begin{frontmatter}

%
%
%
\title{Clustering of large deviations in moving average processes: the long memory regime}
\runtitle{Clustering of large deviations}


\author{\fnms{Arijit} \snm{Chakrabarty}\ead[label=e1]{arijit.isi@gmail.com}}
\address{Theoretical Statistics and Mathematics Unit \\
  Indian Statistical Institute,  Kolkata \\
\printead{e1}}
\and
\author{\fnms{Gennady} \snm{Samorodnitsky} \thanksref{t1} \ead[label=e2]{gs18@cornell.edu}}
  \thankstext{t1}{$^*$The corresponding author.  Research partially 
   supported by NSF grant  DMS-2015242 at Cornell University.  Part of
   this work was performed when GS was visiting Department of
   Mathematics of National University of Singapore, whose hospitality
   is gratefully acknowledged.} 
 \address{School of Operations Research\\
   and Information Engineering\\
  Cornell University\\
\printead{e2}}
\affiliation{Indian Statistical Institute, Kolkata and Cornell University}

\runauthor{Chakrabarty and Samorodnitsky}
  
\numberwithin{equation}{section}
\numberwithin{theorem}{section}


\begin{abstract}
 We investigate how  large deviations events cluster in the framework of an
infinite moving average process with light-tailed noise and long
memory. The long memory makes clusters larger, and the asymptotic
behaviour of the size of the cluster turns out to be described by the
first hitting time of a randomly shifted fractional Brownian motion
with drift.  
\end{abstract}

\begin{keyword}[class=AMS]
\kwd[Primary ]{60F10}
\end{keyword}

\begin{keyword}
  \kwd{large deviations}
   \kwd{clustering}
   \kwd{infinite moving average}
   \kwd{long memory}
 \end{keyword}

\end{frontmatter}

%
%

\section{Introduction} \label {sec:intro}

We consider an infinite moving average process of the form 
\begin{equation} \label{e:MA}
X_n=\sum_{i=0}^\infty a_i Z_{n-i}\,,\ n\ge0\,,
\end{equation}
where the noise variables $(Z_n:n\in\bbz)$ are assumed to bef i.i.d.\
non-degenerate random variables. The noise distribution $F_Z$ is
assumed have finite exponential moments: 
\begin{equation}  \label{eq.exp}
  \int_\bbr e^{tz}\, F_Z(dz)<\infty\text{ for all }t\in\bbr\,.
\end{equation}
Furthermore, assuming that the noise is centred: 
\begin{equation} \label{ima.eq1}
  \int_\bbr z\, F_Z(dz) =0\,, 
\end{equation}
the series defining the process in \eqref{e:MA} converges  if and
only if the coefficients $a_0,a_1,a_2\ldots$ satisfy 
\begin{equation}   \label{eq.sqsum}
  \sum_{j=0}^\infty a_j^2<\infty\,.
\end{equation}
In this case $(X_n)$ is 
 a zero mean stationary ergodic 
process. For $\vep>0$ we consider the sequence of large deviation
events 
\begin{equation} \label{e:E.j}
E_j(n,\vep)=\left\{\frac1{n}\sum_{i=j}^{n+j-1}X_i\ge \vep\right\}, \
j\geq 0. 
\end{equation}
By stationarity, each event $E_j(n,\vep)$ is equally rare,
and we are interested in the cluster of these events that occur given
that the event $E_0(n,\vep)$ occurs.

In \cite{chakrabarty:samorodnitsky:2022} the short memory case was
considered. In this context, ``short memory'' corresponds to the case
\begin{equation} \label{e:short.m}
\sum_{n=0}^\infty |a_n|<\infty \ \text{and} \ \sum_{n=0}^\infty
a_n\not= 0.
\end{equation}
In this short memory case the conditional on $E_0(n,\vep) $ law of the
sequence  $\bigl( \one(  E_j(n,\vep), \,j=1,2,\ldots)$ converges
weakly, as $n\to\infty$, to the law of a sequence with a.s. finitely
many non-zero entries. the total number $D_\vep$ of the non-zero
entries turns out to scale as $\vep^{-2}$, and $\vep^2D_\vep$ has an
interesting weak limit as $\vep\to 0$. We refer the reader to
\cite{chakrabarty:samorodnitsky:2022} for details, and a minor
technical condition required for the above statements.

In the present paper we are interested in the long memory case. For
the moving average processes \eqref{e:MA} ``long memory'' refers to
the case when the coefficients $(a_j)$ satisfy the square summability
assumption \eqref{eq.sqsum} but not the absolute summability
assumption in \eqref{e:short.m}. A typical assumption in this is
\begin{equation} \label{e:regvar.a}
(a_n) \ \text{is regularly varying with exponent} \ -\alpha, \
1/2<\alpha<1;
\end{equation}
see \cite{samorodnitsky:2016}. It turns out that, in this case (under
certain technical assumptions, an example of which is below), the
conditional  on $E_0(n,\vep) $ law of the
sequence  $\bigl( \one(  E_j(n,\vep), \,j=1,2,\ldots)$ converges
weakly, as $n\to\infty$, to the degenerate probability measure
$\delta_{(1,1,\ldots)}$. That is, once the event $E_0(n,\vep) $
occurs, the events $(E_j(n,\vep))$ become very likely. In order to
understand their structure we concentrate on the random variables 
\begin{equation} \label{e:I.n}
I_n(\vep)=\inf\left\{j\ge1:E_j(n,\vep)\text{ does not
    occur}\right\},\, n\ge1
\end{equation} 
and establish a weak limit theorem for this sequence under a proper
scaling. Interestingly, the limit turns out to be the law of the first hitting
time of a randomly shifted fractional Brownian motion with drift.  


The main result containing the above limit theorem and the technical
assumptions it requires are in Section \ref{sec:main}. The proof of
the main theorem requires a long sequence of preliminary results, all
of which are presented in that section. Finally, some useful facts
needed for the proofs are collected in Section \ref{sec:useful.f}.

\section{The assumptions and the main result} \label{sec:main} 

Our result on clustering of large deviation events in the long memory
case will require a number of assumptions that we state next. First of
all, we will replace the assumption of regular variation
\eqref{e:regvar.a} by the asymptotic power function assumption 
\begin{equation} \label{e:power.a}
a_n\sim n^{-\alpha}, \ 
1/2<\alpha<1, \  \text{and is eventually monotone}.
\end{equation}
There is no doubt that the results of the paper hold under the more
general regular variation assumption as well. The extra generality
will, however, require making an already highly technical argument
even more so. The potentially resulting lack of clarity makes the
added generality less valuable. The same is true about the eventual
monotonicity assumption. 

We will need additional assumptions on the distribution of the noise
variables. We will assume that some $\theta_0>0$,
\begin{equation}\label{eq.chf}
\sup_{|\theta|\le\theta_0}\int_{-\infty}^\infty
t^2\left|\int_{-\infty}^\infty e^{(i t+\theta)z}\, F_Z(dz)\right|
dt<\infty\,. 
\end{equation}

Next, let 
\begin{equation} \label{e:noise.v}
\sigma_Z^2=  \int_\bbr z^2\, F_Z(dz) 
\end{equation}
be the variance of the noise. Denote
\begin{equation}   \label{eq.kappa}
  \kappa = \text{the smallest integer} >\frac{4\alpha-1}{2-2\alpha}. 
 \end{equation}
In other words, $\kappa=\bigl[ (1+2\alpha)/(2-2\alpha)\bigr]$. We
assume that a generic noise variable $Z$ satisfies 
\begin{equation} \label{e:moment.eq}
  EZ^i = EG^i \ \text{for} \ 1\le i\le\kappa,
\end{equation} 
where $G\sim N(0,\sigma_Z^2)$.

\begin{remark} \label{rk:cond}
  {\rm
It is standard to verify that \eqref{eq.chf} implies that the noise
distribution has a twice continuously differentiable density
$f_Z$. One the other hand, a sufficient condition for \eqref{eq.chf} is that
the noise distribution has a four times continuously differentiable density  $f_Z$ such that
\[
 \int_{-\infty}^\infty e^{\theta_0
   |x|}\left|\frac{d^i}{dx^i}f_Z(x)\right|dx<\infty\ \text{for} \ i=1,2,3,4.
\]

The moment equality assumption \eqref{e:moment.eq} restricts how far
the the noise distribution can be from a normal distribution. Note
that in the range $1/2<\alpha<5/8$ we have $\kappa=2$,  in which case
the assumption is vacuous. Since $\kappa\ge2$ for all
$\alpha\in(1/2,1)$, \eqref{ima.eq1} is implied by
\eqref{e:moment.eq}. 
}
\end{remark}

To state our main result, we need to introduce several key
quantities. Let
\begin{equation} \label{e:beta}
\beta=\frac{4-4\alpha}{3-2\alpha}\in (0,1)
\end{equation}
and 
\begin{equation}
\label{lpm.eq.defH}H=3/2-\alpha\in (1/2,1). 
\end{equation}
We denote by $(B_H(t):t\ge0)$ the standard fractional Brownian motion
with Hurst index $H$,  i.e. a zero mean Gaussian process with
continuous paths and covariance function 
\begin{equation}
\label{lpm.fbm}
E\left(B_H(s)B_H(t)\right)=\frac12\left(s^{2H}+t^{2H}-|s-t|^{2H}\right),
\, s,t\ge0\,.
\end{equation}

If $T_0$ is a standard exponential random variable independent of the
fractional Brownian motion, then 
\begin{equation} \label{e:tau}
\tau_\vep=\inf\left\{t\ge0:\, B_H(t)\le(2C_\alpha )^{-1/2}\vep
  t^{2H}-(C_\alpha /2)^{1/2} \sigma_Z^2\vep^{-1} T_0\right\}, \, \vep>0,
\end{equation}
is an a.s. finite and strictly positive random variable. Here  $\sigma_Z^2$ is
the variance of the noise in \eqref{e:noise.v} and 
\begin{equation}
\label{lpm.defC}C_\alpha=\frac{\Beta(1-\alpha,2\alpha-1)}{(1-\alpha)(3-2\alpha)}, 
\end{equation}
with $\Beta(\cdot,\cdot)$ the standard Beta function. 

We are now in a position to state the main result of this paper.
\begin{theorem}\label{lpm.t1}
Assume the finite exponential moment condition \eqref{eq.exp}, that
the coefficients satisfy the power-type condition \eqref{e:power.a}, the
regularity condition \eqref{eq.chf} and the moment equality condition
\eqref{e:moment.eq}.  Then for every $\vep>0$ the first non-occurrence
times \eqref{e:I.n} satisfy 
\begin{equation} \label{e:main.conv}
P\left(n^{-\beta}I_n(\vep)\in\cdot\bigr|E_0(n,\vep)\right)\Rightarrow
P\left(\tau_\vep\in\cdot\right), \, n\to\infty\,.
\end{equation}
\end{theorem}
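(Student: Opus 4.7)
The plan is to rescale $I_n(\vep)$ as the first time a suitable rescaled process crosses below zero, identify the conditional functional limit of that process, and conclude by a continuous-mapping argument. Write $\Sigma_k = \sum_{i=0}^{k-1} X_i$, so that $S_n(j) = \Sigma_{n+j} - \Sigma_j$ and $E_0(n,\vep) = \{\Sigma_n \geq n\vep\}$, and define
\[
W_n(t) = \frac{S_n(\lfloor tn^\beta\rfloor) - n\vep}{n^{\beta H}}, \qquad t \geq 0.
\]
The algebraic identity $\beta H = 2H - 1 = 2-2\alpha$ is essential: it makes $n^{\beta H}$ simultaneously the natural fluctuation scale of the excess $\Sigma_n - n\vep$ on $E_0(n,\vep)$ (via a Gaussian large-deviation heuristic) and the fluctuation scale of $S_n(\lfloor tn^\beta\rfloor) - \Sigma_n$ in $t$ (since $\Var(\Sigma_{n+j}-\Sigma_n-\Sigma_j) \sim 2C_\alpha\sigma_Z^2 j^{2H}$ for $1\ll j\ll n$). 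With this scaling, $\{I_n(\vep) > \lfloor tn^\beta\rfloor\}$ is precisely the event that $W_n$ stays nonnegative at the grid points $jn^{-\beta}$ for $0 \leq j \leq \lfloor tn^\beta\rfloor$, so the theorem reduces to a conditional functional limit for $W_n$ together with an identification of the first passage time at zero.

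Decompose $W_n(t) = V_n + M_n(t)$ with $V_n = (\Sigma_n - n\vep)/n^{\beta H}$ and $M_n(t) = (S_n(\lfloor tn^\beta\rfloor) - \Sigma_n)/n^{\beta H}$, so $M_n(0) = 0$. The heart of the argument is the conditional joint weak convergence, under $P(\cdot\mid E_0(n,\vep))$,
\[
\bigl(V_n, M_n(\cdot)\bigr) \;\Longrightarrow\; \bigl(V, M(\cdot)\bigr),
\]
where $V = C_\alpha\sigma_Z^2 T_0/\vep$ with $T_0 \sim \text{Exp}(1)$, $M(t) := -\vep t^{2H} + \sigma_Z\sqrt{2C_\alpha}\,B_H(t)$ with $B_H$ a standard fractional Brownian motion of Hurst index $H$, and $T_0$ is independent of $B_H$. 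Three ingredients produce this convergence. First, a sharp local limit theorem for the density of $\Sigma_n$ at level $n\vep$ yields the exponential limit of $V_n$ via the standard Gaussian-tail conditional-excess calculation. Second, the Gaussian-projection identity $E[M_n(t)\mid\Sigma_n=c] = c\,\Cov(S_n(j)-\Sigma_n,\Sigma_n)/\Var(\Sigma_n)$, combined with the fractional-Brownian asymptotic $\Cov(B_H(1+s)-B_H(s)-B_H(1),\,B_H(1)) \sim -s^{2H}$ as $s \to 0^+$ (which transports to $\Cov(S_n(j)-\Sigma_n,\Sigma_n)\sim -C_\alpha\sigma_Z^2 j^{2H}$), produces the drift $-\vep t^{2H}$ after rescaling. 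Third, the asymptotic independence of $\Sigma_{n+\lfloor tn^\beta\rfloor}-\Sigma_n$ and $\Sigma_{\lfloor tn^\beta\rfloor}$ (their cross-covariance is of order $(tn^\beta)^2 n^{1-2\alpha}$, strictly smaller than their individual variances $\sim (tn^\beta)^{2H}$ at this scaling) combined with the long-memory invariance principle applied to each piece separately gives the Gaussian fluctuation $\sigma_Z\sqrt{2C_\alpha}B_H(\cdot)$; the same decomposition yields the asymptotic independence of $T_0$ and $B_H$. Tightness of $M_n$ on compact time intervals rests on standard Kolmogorov-chaining bounds valid under the exponential-moment condition \eqref{eq.exp}.

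The limit process $W(t) := V + M(t)$ is strictly positive at $t=0$ a.s.\ and drifts to $-\infty$, and its first passage through $0$ is a.s.\ non-tangential by Gaussian non-degeneracy, so the first-passage-at-zero functional is a.s.\ continuous at $W$. Continuous mapping then gives $n^{-\beta}I_n(\vep) \Rightarrow \inf\{t\geq 0 : W(t) < 0\}$ under $P(\cdot\mid E_0(n,\vep))$, and rearranging $W(t) \leq 0$ so as to isolate $B_H(t)$ on one side recovers the expression for $\tau_\vep$. The principal technical obstacle is the sharp local limit theorem on which the three ingredients rely: the level $n\vep$ sits precisely at the boundary of the Gaussian regime for $\Sigma_n$ (the probability $P(E_0(n,\vep))$ decays like $e^{-cn^{2\alpha-1}}$ with Gaussian rate constant), so a sharp Edgeworth-type expansion of the joint density of $\Sigma_n$ together with the finite-dimensional distributions of the increments $(S_n(\lfloor t_i n^\beta\rfloor))_i$ must remain accurate at this tail depth. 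The regularity hypothesis \eqref{eq.chf} supplies the smoothness of these densities, and the moment-matching hypothesis \eqref{e:moment.eq}, calibrated via $\kappa$ to $\alpha$, suppresses enough Edgeworth correction terms to keep the Gaussian leading term dominant at probabilities of order $e^{-cn^{2\alpha-1}}$; both hypotheses are indispensable for making the conditional limit theorem sharp enough to drive the continuous-mapping argument.
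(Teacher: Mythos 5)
Your overall architecture coincides with the paper's: rescale $S_n([n^\beta t])-n\vep$ by $n^{2-2\alpha}=n^{\beta H}$, prove a conditional functional limit theorem with limit $(2C_\alpha)^{1/2}B_H(t)+\vep^{-1}C_\alpha\sigma_Z^2T_0-\vep t^{2H}$, and apply continuous mapping to the first passage functional. Your identifications of the scaling exponents, the drift $-\vep t^{2H}$ via the covariance asymptotic \eqref{lpm.l0.eq4}, the variance $2C_\alpha\sigma_Z^2 j^{2H}$, and the asymptotic decorrelation of $\Sigma_{n+j}-\Sigma_n$ and $\Sigma_j$ are all correct.

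The gap is in how you propose to establish the conditional limit theorem. Your three ingredients all rest on treating the process as if it were Gaussian: the projection identity $E[M_n(t)\mid\Sigma_n=c]=c\,\Cov(\cdot,\cdot)/\Var(\cdot)$ is exact only for Gaussian vectors, and the ``invariance principle applied to each piece separately'' describes the \emph{unconditional} fluctuations, whereas what is needed is the law of the increments \emph{conditionally on} $E_0$, an event of probability $e^{-cn^{2\alpha-1}}$ that tilts the joint law of all the noise variables entering $S_n$. Naming ``a sharp local limit theorem / Edgeworth expansion'' as the obstacle identifies the difficulty but supplies no mechanism for resolving it: one must show that, given $E_0$, the noise variables acquire precisely the tilted means $\int z\,G_{\zeta_n(A_i-A_{i-n})}(dz)$ (which produce the drift through the deterministic computation of Lemma \ref{l:mu.n}) while their conditional covariances deviate from the unconditional ones only by the negligible amount $-\sigma_n^{-2}\sigma_Z^4(A_i-A_{i-n})(A_j-A_{j-n})$. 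The paper achieves this by exponential tilting (making $E_0$ typical under the changed measure), a non-identically-distributed Berry--Esseen bound, and the $\kappa$-term moment matching to control the Edgeworth corrections uniformly over the individual noise coordinates (Lemmas \ref{lpm.l1}--\ref{lpm.newl2}); none of this is replaceable by the Gaussian projection formula.

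Second, your tightness argument fails as stated. ``Standard Kolmogorov-chaining bounds'' for the unconditional process cannot be transferred to $P(\cdot\mid E_0)$ by dividing by $P(E_0)$, since $P(E_0)$ is stretched-exponentially small. Tightness must be proved from \emph{conditional} increment moment bounds, which is exactly why the paper needs the uniform conditional second-moment estimates $E(Y_{ni}^2\mid E_0)=O(1)$ and $E(Y_{ni}Y_{nj}\mid E_0)\le 0$ (for large indices) of Lemma \ref{lpm.newl2} before it can establish \eqref{lpm.al8.eq1}. Without a conditional moment bound of this type, the continuous-mapping step has nothing to stand on beyond finite-dimensional convergence, which does not suffice for the first-passage functional.
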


\begin{remark} {\rm
It is worthwhile to observe that the limit law obtained in Theorem
\ref{lpm.t1} depends on the noise distribution only through its
variance $\sigma_Z^2$. This can be understood by noticing that in the
long memory case considered in this paper we have
$\var(X_1+\cdots+X_n)\gg n$; see Lemma \ref{lpm.l0} below. Therefore, the
events $E_j(n,\vep)$ should be viewed as moderate deviation events,
not large deviation events. It has been observed in many
situations that moderate deviation events are influenced by the
Gaussian  weak limit of the quantities of
interest. At the intuitive level, this explains why it is the variance
of the process that appears in the limit.

For comparison, in the short memory case \eqref{e:short.m}, we have
$\var(X_1+\cdots+X_n)\sim cn$ for some $c>0$, the events $E_j(n,\vep)$
should be viewed as  large deviation events, and their behaviour
depends on much more than just the variance of the noise. See
\cite{chakrabarty:samorodnitsky:2022} for details. 
}
\end{remark}

We start on the road to proving Theorem \ref{lpm.t1} by establishing
certain basic estimates that will be used throughout the paper. Denote 
\begin{equation} \label{e:A.j}
A_j=\sum_{i=0}^ja_i, \, j\in\bbz\,,
\end{equation}
with the convention that a sum (or an integral) is zero if the lower
limit exceeds the upper limit (so that $A_j=0$ for $j\le-1$, for
example).  Let 
\begin{equation} \label{e:S.n}
S_n=\sum_{i=0}^{n-1}X_i, \, n\ge1\,,
\end{equation}
and denote 
\begin{equation} \label{e:var.Sn}
\sigma_n^2=\Var(S_n), \, n\ge1\,.
\end{equation}
In the sequel we use the following notation. We will denote by 
\begin{equation}\label{e:varphiz}
\varphi_Z(t)=\log\left(\int_\bbr e^{tz}\,
  F_Z(dz)\right),\ t\in\bbr
\end{equation}
the log-Laplace transform of a noise variable. We will frequently use
the obvious facts 
\begin{equation}
\label{lpm.l3.eq1}\varphi \ \text{is convex and} \ \varphi_Z(x)\sim
x^2\sigma_Z^2/2, \  x\to0, 
\end{equation}
and
\begin{equation} \label{e:phiprime}
\varphi_Z^\prime \ \text{is continuous, nondecreasing and} \
\varphi_Z^\prime(x)= x \sigma_Z^2+ O(x^2), \  x\to0. 
\end{equation}
We will write 
$G_\theta$ for the probability
measure obtained by exponentially tilting the law $F_Z$ by
$\theta\in\bbr$. That is,
\begin{equation} \label{e:G.theta}
  G_\theta(dz)= \bigl(Ee^{\theta Z}\bigr)^{-1} e^{\theta z} F_Z(dz).
\end{equation}
It is clear that, as $\theta\to 0$, 
\begin{align} \label{e:tilt.facts}
&\int_\bbr z\, G_\theta(dz)\sim \theta\sigma_Z^2, \ 
\left| \int_\bbr z\, G_\theta(dz)-\theta\sigma_Z^2\right| =
  O(\theta^2) \  \text{and} \ = O(|\theta|^3) \ \ \text{if} \ \
                                     \kappa\geq 3,
\\
\notag &
 \int_\bbr |z|^k\,
G_\theta(dz)\to \int_\bbr |z|^k\, F(dz), \, k=1,2,\ldots.
\end{align}

\begin{lemma}\label{lpm.l0}
Asymptotically we have 
\begin{equation}
\label{lpm.l0.eq1}
A_j\sim(1-\alpha)^{-1}j^{1-\alpha}, \, j\to\infty
\end{equation}
and 
\begin{equation}
\label{lpm.l0.eq2}\sigma_n^2\sim C_\alpha \sigma_Z^2n^{3-2\alpha}, \, n\to\infty\,.
\end{equation}
Furthermore,  for any  $t>0$, as $n\to\infty$,
\begin{equation}
\label{lpm.l0.eq3}\sum_{i=0}^{[n^\beta t]}(A_i-A_{i-n})^2\sim K_1t^{3-2\alpha}n^{4-4\alpha}\,,
\end{equation}
and
\begin{equation}
  \label{lpm.l0.eq3.1}\sum_{i=n-[n^\beta t]+1}^n(A_i-A_{i-n})^2
  \sim\sum_{i=n+1}^{n+[n^\beta t]}(A_i-A_{i-n})^2 
\sim(1-\alpha)^{-2}n^{2-2\alpha+\beta}t\,,
\end{equation}
with 
\begin{equation}
\label{lpm.eq.defK1}K_1=(1-\alpha)^{-2}(3-2\alpha)^{-1}\,.
\end{equation}

Finally, for any $t>0$, as $n\to\infty$, 
\begin{equation}\label{lpm.l0.eq4}
\frac{\sigma_Z^{2}}{\sigma_n^{2}}\sum_{i=0}^\infty\left(A_i-A_{i-n}\right)\left(A_{i+[n^\beta
    t]}-A_{i+[n^\beta t]-n}\right)= 1-
n^{1-2\alpha}t^{3-2\alpha}(1+o(1))\,. 
\end{equation}
\end{lemma}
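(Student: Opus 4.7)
The relation \eqref{lpm.l0.eq1} is a direct application of standard partial-sum (Karamata) asymptotics for the eventually monotone, regularly varying sequence $a_n\sim n^{-\alpha}$. For \eqref{lpm.l0.eq2}, I would avoid expressing $\sigma_n^2$ through the coefficients $A_i - A_{i-n}$ and instead work through the autocovariance $r(h) = \mathrm{Cov}(X_0, X_h)/\sigma_Z^2 = \sum_{k\ge 0} a_k a_{k+h}$. A Riemann-sum argument (substituting $k = hu$) combined with \eqref{e:power.a} gives the familiar long-memory asymptotic
\[
r(h) \sim \Beta(1-\alpha, 2\alpha - 1)\, h^{1-2\alpha}, \qquad h\to\infty.
\]
Plugging this into $\sigma_n^2/\sigma_Z^2 = n\, r(0) + 2\sum_{h=1}^{n-1}(n-h)\, r(h)$ and performing a second Riemann-sum step with $h = nu$ yields
\[
\sigma_n^2/\sigma_Z^2 \sim 2\,\Beta(1-\alpha, 2\alpha-1)\, n^{3-2\alpha} \int_0^1 (1-u) u^{1-2\alpha}\, du.
\]
The remaining Beta integral equals $1/[2(3-2\alpha)(1-\alpha)]$, yielding exactly the constant $C_\alpha$ of \eqref{lpm.defC}.

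The asymptotics \eqref{lpm.l0.eq3} and \eqref{lpm.l0.eq3.1} are then routine once the summation ranges are handled. In \eqref{lpm.l0.eq3}, since $\beta<1$ forces $[n^\beta t]<n$ for all large $n$, we have $A_{i-n}=0$ throughout the range; inserting $A_i \sim (1-\alpha)^{-1} i^{1-\alpha}$ and passing to a Riemann sum converts the sum to $(1-\alpha)^{-2}(3-2\alpha)^{-1}[n^\beta t]^{3-2\alpha}$, and the identity $\beta(3-2\alpha) = 4-4\alpha$ gives the stated rate $K_1 t^{3-2\alpha} n^{4-4\alpha}$. For \eqref{lpm.l0.eq3.1}, each of the two sums has $[n^\beta t]$ terms on which $A_i - A_{i-n} \sim (1-\alpha)^{-1} n^{1-\alpha}$: in the first sum the constraint $i\le n$ forces $A_{i-n}=0$ and $A_i$ is asymptotically constant across the range; in the second, writing $i = n+k$ with $1\le k\le n^\beta t$, the subtracted quantity $A_k = O(n^{\beta(1-\alpha)})$ is of strictly smaller order than $A_{n+k}\sim (1-\alpha)^{-1} n^{1-\alpha}$. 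Either way each summand contributes $(1-\alpha)^{-2} n^{2-2\alpha}(1+o(1))$, and multiplying by the number of terms gives $(1-\alpha)^{-2} n^{2-2\alpha+\beta} t$.

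The main obstacle is \eqref{lpm.l0.eq4}, for which I would take a probabilistic detour. Setting $m = [n^\beta t]$ and $S'_{n,m} = \sum_{i=m}^{m+n-1} X_i$, and writing out the coefficient of each noise variable in $S_n$ and $S'_{n,m}$, one obtains the identity
\[
\sigma_Z^2 \sum_{i=0}^\infty (A_i - A_{i-n})(A_{i+m} - A_{i+m-n}) = \mathrm{Cov}(S_n, S'_{n,m}),
\]
so the left-hand side of \eqref{lpm.l0.eq4} equals the correlation $\mathrm{Corr}(S_n, S'_{n,m})$. For $m<n$ the difference $S'_{n,m} - S_n$ telescopes to $\tilde S_m - S_m$, where $\tilde S_m = \sum_{i=n}^{n+m-1} X_i$. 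This yields the key decomposition
\[
1 - \mathrm{Corr}(S_n, S'_{n,m}) = \frac{\mathrm{Var}(S'_{n,m} - S_n)}{2\sigma_n^2} = \frac{\sigma_m^2 - \mathrm{Cov}(S_m, \tilde S_m)}{\sigma_n^2}.
\]
By \eqref{lpm.l0.eq2} and the elementary identity $(\beta-1)(3-2\alpha) = 1-2\alpha$, one has $\sigma_m^2/\sigma_n^2 \sim t^{3-2\alpha} n^{1-2\alpha}$, which matches the main term in \eqref{lpm.l0.eq4}. It remains to show that the cross-term is strictly lower order. Writing $\mathrm{Cov}(S_m, \tilde S_m)/\sigma_Z^2 = \sum_{i,j=0}^{m-1} r(n+j-i)$ and using the asymptotic for $r$ at arguments of order $n$ gives $\mathrm{Cov}(S_m, \tilde S_m) = O(m^2 n^{1-2\alpha})$, and hence $\mathrm{Cov}(S_m, \tilde S_m)/\sigma_n^2 = O(n^{2\beta-2})$. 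A short arithmetic check gives $2(\beta-1) - (1-2\alpha) = -(2\alpha-1)^2/(3-2\alpha) < 0$, so this contribution is genuinely of smaller order, and \eqref{lpm.l0.eq4} follows.
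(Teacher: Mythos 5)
Your treatment of \eqref{lpm.l0.eq1}--\eqref{lpm.l0.eq3.1} is correct and essentially identical to the paper's: the same autocovariance computation $r(h)\sim \Beta(1-\alpha,2\alpha-1)h^{1-2\alpha}$ followed by the Riemann sum $2\int_0^1(1-u)u^{1-2\alpha}du$ for \eqref{lpm.l0.eq2}, and the same observation that $A_{i-n}$ is negligible (resp.\ that $A_{i-n}=O(n^{\beta(1-\alpha)})\ll n^{1-\alpha}$) on the relevant ranges for \eqref{lpm.l0.eq3} and \eqref{lpm.l0.eq3.1}. Where you genuinely diverge is \eqref{lpm.l0.eq4}. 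Both arguments ultimately rest on the polarization identity $1-\mathrm{Corr}(S_n,S'_{n,m})=\Var(S'_{n,m}-S_n)/(2\sigma_n^2)$, but the paper evaluates $\Var(S'_{n,m}-S_n)$ directly through the coefficient sums, which forces it to compute $\sum_{i=0}^{n-1}(A_i-A_{i-[n^\beta t]})^2$ and a matching tail sum, each requiring the integral identity \eqref{e:pickard} to recognize $\int_0^\infty[y^{1-\alpha}-(y-1)_+^{1-\alpha}]^2dy=(1-\alpha)^2C_\alpha$. Your telescoping $S'_{n,m}-S_n=\tilde S_m-S_m$ sidesteps this entirely: the main term becomes $\sigma_m^2/\sigma_n^2$, whose asymptotics you already have from \eqref{lpm.l0.eq2}, and the cross term $\Cov(S_m,\tilde S_m)/\sigma_n^2=O(n^{2\beta-2})$ is killed by the (correct) arithmetic $2(\beta-1)-(1-2\alpha)=-(2\alpha-1)^2/(3-2\alpha)<0$. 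I checked the identity $\Cov(S_n,S'_{n,m})=\sigma_Z^2\sum_i(A_i-A_{i-n})(A_{i+m}-A_{i+m-n})$ and all the exponent arithmetic; everything is sound. Your route is shorter and more elementary, at the price of not producing the intermediate asymptotics \eqref{lpm.l0.eq10}, \eqref{full.s.eq10} and \eqref{lpm.l0.eq12}, which the paper deliberately extracts here because they are reused later (in Lemma \ref{l:mu.n} and Lemma \ref{lpm.l6}); if you adopted your proof you would need to establish those facts separately when they are first needed.
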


\begin{proof}
The claim \eqref{lpm.l0.eq1} is, of course, an immediate consequence
of the assumption \eqref{e:power.a}. For \eqref{lpm.l0.eq2}, first
note that 
\begin{align*}
R_n=\Cov(X_0,X_n)& 
\sim\sigma_Z^2\sum_{j=1}^\infty j^{-\alpha}(j+n)^{-\alpha}\\
\nonumber&\sim n^{1-2\alpha}\sigma_Z^2\int_0^\infty
           x^{-\alpha}(1+x)^{-\alpha}\, dx \\
   &=C_\alpha\sigma_Z^2(1-\alpha)(3-2\alpha)n^{1-2\alpha} 
\end{align*}
as $n\to\infty$. Therefore,
\begin{align*}
\sigma_n^2&=\sum_{i=-(n-1)}^{n-1}(n-|i|)R_{|i|}
          \sim2C_\alpha\sigma_Z^2(1-\alpha)(3-2\alpha)\sum_{i=0}^{n-1}(n-i)i^{1-2\alpha}\\
  \nonumber&\sim2C_\alpha \sigma_Z^2(1-\alpha)(3-2\alpha)n^{3-2\alpha}
             \int_0^1(1-x)x^{1-2\alpha}\, dx  
=C_\alpha \sigma_Z^2 n^{3-2\alpha}\,,
\end{align*}
which is \eqref{lpm.l0.eq2}. Next, for a fixed $t>0$ and  large $n$,
by \eqref{lpm.l0.eq1} and the fact that $\beta<1$, 
\begin{align*}
&\sum_{i=0}^{[n^\beta t]}(A_i-A_{i-n})^2=\sum_{i=0}^{[n^\beta t]}A_i^2
 \sim (1-\alpha)^{-2} \sum_{i=1}^{[n^\beta t]} i^{2-2\alpha}
\sim K_1\left(n^\beta t\right)^{3-2\alpha}, 
\end{align*}
proving \eqref{lpm.l0.eq3}. Similarly, 
\begin{align*}
\sum_{i=n-[n^\beta t]+1}^n\left(A_i-A_{i-n}\right)^2 
  \sim \sum_{i=n-[n^\beta t]+1}^nA_n^2
  \sim(1-\alpha)^{-2}n^{\beta+2-2\alpha}t\,,
\end{align*}
showing the first equivalence in \eqref{lpm.l0.eq3.1}
and the second equivalence can be shown in the same way.

 For \eqref{lpm.l0.eq4},  we start by writing 
\begin{equation}\label{lpm.l0.eq8}
S_n=\sum_{j=0}^\infty(A_j-A_{j-n})Z_{n-1-j}\,,n\ge1\,,
\end{equation}
so that 
\begin{align}\label{lpm.l0.sigman}
\sigma_n^2=\sigma_Z^2\sum_{j=0}^\infty(A_j-A_{j-n})^2\,,n\ge1\,.
\end{align}
Therefore, for large $n$, 
\begin{align}
\nonumber&\frac{\sigma_n^2}{\sigma_Z^2}-\sum_{i=0}^\infty(A_i-A_{i-n})(A_{i+[n^\beta t]}-A_{i+[n^\beta t]-n})\\
\nonumber&=\frac12\left[\sum_{i=0}^{[n^\beta t]-1}(A_i-A_{i-n})^2+\sum_{i=0}^\infty\left(A_{i+[n^\beta t]}-A_{i+[n^\beta t]-n}-A_i+A_{i-n}\right)^2\right]\\
\label{lpm.l0.eq11}&=\frac12\Biggl[\sum_{i=0}^{n-1}\left(A_i-A_{i-[n^\beta t]}\right)^2\\
\nonumber&\,\,\,\,+\sum_{i=n-[n^\beta t]}^\infty\left(A_{i+[n^\beta t]}-A_{i+[n^\beta t]-n}-A_i+A_{i-n}\right)^2\Biggr]\,. 
\end{align}
By \eqref{lpm.l0.eq1}, 
\begin{align*}
&\sum_{i=0}^{n-1}\left(A_i-A_{i-[n^\beta t]}\right)^2
\sim(1-\alpha)^{-2}\sum_{i=1}^{n-1}\left(i^{1-\alpha}-(i-[n^\beta t])_+^{1-\alpha}\right)^2\\
 \nonumber&\sim n^{4-4\alpha}t^{3-2\alpha}(1-\alpha)^{-2}\int_0^\infty \left[y^{1-\alpha}-(y-1)_+^{1-\alpha}\right]^2dy
\end{align*}
as $n\to\infty$. By \eqref{e:pickard} with $H=3/2-\alpha$, 
\begin{align}\label{lpm.l0.eq12}
&\int_0^\infty \left[y^{1-\alpha}-(y-1)_+^{1-\alpha}\right]^2dy \\
\nonumber&=\left[\left(3-2\alpha\right)(1-\alpha)\right]^{-1}\frac{\sin(\pi\alpha)}\pi \Gamma(2\alpha-1)\Gamma(2-\alpha)^2\\
 \nonumber&=\frac{1-\alpha}{3-2\alpha}\Beta\left(2\alpha-1,1-\alpha\right) 
=(1-\alpha)^{2} C_\alpha, 
\end{align}
so 
\begin{equation}\label{lpm.l0.eq10}
\sum_{i=0}^{n-1}\left(A_i-A_{i-[n^\beta t]}\right)^2\sim 
C_\alpha t^{3-2\alpha}n^{4-4\alpha}, \, n\to\infty\,.
\end{equation}
Since
\begin{align} \label{e:interm.10}
\sum_{i=n}^{\infty}\left(A_i-A_{i-[n^\beta t]}\right)^2
  =O\left( n^{2\beta}\sum_{i=n}^{\infty} i^{-2\alpha}\right)
  =O\bigl( n^{2\beta+1-2\alpha}\bigr) = o\bigl( n^{4-4\alpha}\bigr), 
\end{align}
we conclude also that 
\begin{equation}\label{full.s.eq10}
\sum_{i=0}^{\infty}\left(A_i-A_{i-[n^\beta t]}\right)^2\sim 
C_\alpha t^{3-2\alpha}n^{4-4\alpha}, \, n\to\infty\,.
\end{equation}
It follows from \eqref{e:interm.10} and \eqref{full.s.eq10} that 
\begin{align*}
&\sum_{i=n-[n^\beta t]}^\infty\left(A_{i+[n^\beta t]}-A_{i+[n^\beta t]-n}-A_i+A_{i-n}\right)^2\\
&=\sum_{j=0}^\infty\left[-A_j+A_{j-[n^\beta  t]}+\left(A_{j+n}-A_{j+n-[n^\beta t]}\right)\right]^2
  \sim C_\alpha t^{3-2\alpha}n^{4-4\alpha}\,.
\end{align*}
 In combination with  \eqref{lpm.l0.eq11} and \eqref{lpm.l0.eq10} we
 obtain 
\[
\frac{\sigma_n^2}{\sigma_Z^2}-\sum_{i=0}^\infty(A_i-A_{i-n})(A_{i+[n^\beta
  t]}A_{i+[n^\beta t]-n})\sim C_\alpha t^{3-2\alpha}n^{4-4\alpha}\,.
\]
Dividing both sides by $\sigma_Z^{-2}\sigma_n^2$ and appealing to \eqref{lpm.l0.eq2}, \eqref{lpm.l0.eq4} follows. 
\end{proof}

We now consider certain large deviations of the partial sum $S_n$ under a change
of measure. With an eye towards a subsequent application, we allow the
partial sum, given in the form \eqref{lpm.l0.eq8}, to be
``corrupted''. For  $n\ge1$ and $t\ge0$ we define 
\begin{align}
\label{lpm.eq0}\xi_n^1(t) &=\sum_{i=1}^{[n^\beta t]}\left(A_i-A_{i-n}\right)Z_{n-i-1}\,,\\
\label{e:middle.pr} \xi_n^2(t) &=  \sum_{i=n-[n^\beta t]}^{n-1} \left(A_i-A_{i-n}\right)Z_{n-i-1}\,,\\
\label{lpm.eq1}\xi_n^3(t) &=  \sum_{i=n+1}^{n + [n^\beta t]} \left(A_i-A_{i-n}\right)Z_{n-i-1}\,.
\end{align}

\begin{lemma}\label{lpm.l1}
Fix $t_1,t_2,t_3>0$ and denote 
\begin{equation} \label{e:Sn.corr}
\bar S_n=S_n-\sum_{i=1}^3\xi_n^i(t_i), \, n\ge1\,.
\end{equation}
Let $(\gamma_n)$, $(\theta_n)$ and $(\eta_n)$ be real sequences satisfying
\[
\gamma_n=o\left(n^{3/2-\alpha}\right), \ 
\theta_n=o\left(n^{-(1-\alpha)}\right), \ 
1\ll\eta_n\ll n^{1/2}\,.
\]
If $\tilde S_n$ is a random variable with the law 
\begin{equation}
\label{lpm.l1.eq1}P\left(\tilde S_n\in
  dx\right)=\left(E(e^{\theta_n\bar
    S_n})\right)^{-1}e^{\theta_nx}P\left(\bar S_n\in dx\right),\,
n\ge1\,, 
\end{equation}
then for all $x\in\bbr$ and $h>0$,
\begin{equation}
\label{lpm.l1.eq2}P\left(\eta_n\sigma_n^{-1}\left(\tilde S_n-E(\tilde
    S_n)+\gamma_n\right)\in[x,x+h]\right)\sim\eta_n^{-1}(2\pi)^{-1/2}h,\,
n\to\infty. 
\end{equation}
Furthermore, 
\begin{equation}
\label{lpm.l1.eq3} \sup_{n\ge 1, \, x\in\bbr}\eta_nP\left(\eta_n\sigma_n^{-1}\tilde S_n\in[x,x+1]\right)<\infty\,.
\end{equation}
\end{lemma}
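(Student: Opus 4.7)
The plan is to reduce the statement to a local central limit theorem for a weighted sum of independent tilted noise variables. The key structural observation is that $\bar S_n$ is a weighted sum of independent noise variables: writing $c_j^{(n)} := A_j - A_{j-n}$ and letting $J_n\subset\{0,1,2,\ldots\}$ be the complement of the three index blocks removed by $\xi_n^1(t_1)$, $\xi_n^2(t_2)$ and $\xi_n^3(t_3)$ in \eqref{lpm.eq0}--\eqref{lpm.eq1}, we have $\bar S_n = \sum_{j\in J_n} c_j^{(n)} Z_{n-1-j}$. Since the exponential tilt factorises over independent summands, $\tilde S_n \eid \sum_{j\in J_n} c_j^{(n)} \tilde Z_j^{(n)}$, where the $\tilde Z_j^{(n)}$ are independent with $\tilde Z_j^{(n)}\sim G_{\theta_n c_j^{(n)}}$ (the tilted law \eqref{e:G.theta}). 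The bounds $|c_j^{(n)}|\le A_n = O(n^{1-\alpha})$ from Lemma \ref{lpm.l0} together with $|\theta_n|=o(n^{-(1-\alpha)})$ make all tilts $\theta_n c_j^{(n)}$ uniformly $o(1)$; then \eqref{e:tilt.facts} gives $E[\tilde Z_j^{(n)}]=\sigma_Z^2\theta_n c_j^{(n)}(1+o(1))$ and $\Var(\tilde Z_j^{(n)})\to \sigma_Z^2$ uniformly in $j$. Combined with the fact (from \eqref{lpm.l0.eq3} and \eqref{lpm.l0.eq3.1}, plus $\beta<1$) that the three excluded blocks contribute $o(\sigma_n^2)$ to the variance, this yields $\Var(\tilde S_n)\sim\sigma_n^2$.

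The next step is a local central limit theorem for $\tilde S_n$. By \eqref{eq.chf} the noise, and hence $\tilde S_n$, has a density $f_{\tilde S_n}$, and Fourier inversion gives
\[
 \sigma_n f_{\tilde S_n}(E\tilde S_n + z) = \frac{1}{2\pi}\int_\bbr e^{-isz/\sigma_n}\,\phi_n(s)\,ds,
\]
where $\phi_n$ is the characteristic function of $(\tilde S_n - E\tilde S_n)/\sigma_n$. I would split this integral at $|s|=M$. On the bulk $|s|\le M$, a third-order Taylor expansion of $\varphi_Z$ in \eqref{e:varphiz} around each tilt $\theta_n c_j^{(n)}$ yields $\phi_n(s)\to e^{-s^2/2}$, with cubic remainder bounded by $|s|^3\sigma_n^{-3}\sum_{j\in J_n}|c_j^{(n)}|^3 = O(|s|^3 n^{-1/2})$ uniformly on bounded $s$; dominated convergence then shows the bulk contribution is $(2\pi)^{-1/2}e^{-z^2/(2\sigma_n^2)}(1+o(1))$, uniformly for $|z|/\sigma_n$ bounded. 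On the tail $|s|>M$, the regularity condition \eqref{eq.chf} provides, for any $\delta>0$, some $\kappa>0$ such that the characteristic function of $G_\theta$ satisfies $\bigl|\int e^{itz}G_\theta(dz)\bigr|\le 1-\kappa$ uniformly for $|t|\ge \delta$ and $|\theta|\le \theta_0/2$; since for any $|s|>M$ a positive fraction of indices $j\in J_n$ yield $|c_j^{(n)}s/\sigma_n|\ge\delta$, the product of these factors decays exponentially in $n$, driving the tail contribution to $0$.

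Combining these ingredients gives the uniform local limit theorem $\sigma_n f_{\tilde S_n}(E\tilde S_n + z_n)\to (2\pi)^{-1/2}$ whenever $z_n/\sigma_n\to 0$, together with the uniform density bound $\sup_y \sigma_n f_{\tilde S_n}(y)\le C$. To deduce \eqref{lpm.l1.eq2}, note that $P(\eta_n\sigma_n^{-1}(\tilde S_n - E\tilde S_n + \gamma_n)\in[x,x+h])$ is the integral of $f_{\tilde S_n}$ over $[E\tilde S_n - \gamma_n + x\sigma_n/\eta_n,\, E\tilde S_n - \gamma_n + (x+h)\sigma_n/\eta_n]$; since $\gamma_n/\sigma_n\to 0$ and $x/\eta_n\to 0$, every point $y$ in this interval satisfies $(y-E\tilde S_n)/\sigma_n\to 0$, so the density there equals $\sigma_n^{-1}(2\pi)^{-1/2}(1+o(1))$, and integrating over an interval of length $h\sigma_n/\eta_n$ produces the claimed $h\eta_n^{-1}(2\pi)^{-1/2}(1+o(1))$. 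The uniform density bound similarly gives \eqref{lpm.l1.eq3}. The main obstacle is the tail estimate: transferring \eqref{eq.chf} to a uniform spectral gap for the characteristic functions of the tilted laws $G_{\theta_n c_j^{(n)}}$, valid for all the relevant small tilts simultaneously, and then pushing this through a product over the many indices $j\in J_n$ whose $|c_j^{(n)}|$ is of maximal order $n^{1-\alpha}$ so as to obtain genuine exponential decay in $n$; once this is in place, the remainder is routine local CLT machinery.
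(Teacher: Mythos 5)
Your structural setup coincides with the paper's: $\tilde S_n$ is a weighted sum of independent tilted noise variables with uniformly small tilts, and $\Var(\tilde S_n)\sim\sigma_n^2$. From there, however, you take a much heavier route (a local CLT for the \emph{density} via Fourier inversion), whereas the paper only needs the Berry--Esseen bound \eqref{e:BE.nonid} for non-identically distributed summands: since $\sigma_n^{-3}\sum_j |A_j-A_{j-n}|^3\,E|\tilde Z_{nj}-E\tilde Z_{nj}|^3=O(n^{-1/2})=o(\eta_n^{-1})$ while the target interval, once rescaled by $\sqrt{\Var(\tilde S_n)}$, has length of exact order $\eta_n^{-1}\gg n^{-1/2}$, a CDF-level Gaussian approximation already yields \eqref{lpm.l1.eq2} and \eqref{lpm.l1.eq3}. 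In particular no density estimates, and hence no use of \eqref{eq.chf}, are required. This matters: the lemma as stated does not assume \eqref{eq.chf}, and the paper explicitly notes that the regularity and moment-matching assumptions are not used until the later lemmas, so a proof that invokes \eqref{eq.chf} establishes a strictly weaker statement than the one claimed.

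More importantly, your tail estimate contains a step that fails. You assert that for any fixed $|s|>M$ ``a positive fraction of indices $j\in J_n$ yield $|c_j^{(n)}s/\sigma_n|\ge\delta$.'' But $\sup_j|c_j^{(n)}|=O(n^{1-\alpha})$ while $\sigma_n\sim (C_\alpha\sigma_Z^2)^{1/2} n^{3/2-\alpha}$, so $\sup_j|c_j^{(n)}|/\sigma_n=O(n^{-1/2})\to0$: for fixed $s$ and large $n$ \emph{no} index satisfies $|c_j^{(n)}s/\sigma_n|\ge\delta$, and the uniform spectral-gap bound on the tilted characteristic functions is unavailable until $|s|\gtrsim \delta\, n^{1/2}$. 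The intermediate range $M\le|s|\le c\,n^{1/2}$ must instead be handled by the second-moment bound \eqref{f.chf} near the origin: the $\sim n/2$ indices with $c_j^{(n)}=A_j\ge c\,n^{1-\alpha}$ give a product bounded by $\bigl(1-s_1c^2s^2/n\bigr)^{n/4}\le e^{-c's^2}$, which is what makes the tail integrable. This two-regime argument is exactly what the paper carries out later, in the proof of Lemma \ref{lpm.newl1} (see the derivation of \eqref{lpm.newl1.eq5}), where a genuine local CLT is actually needed. As written, your tail estimate does not go through; it is repairable along the lines just indicated, but for the present lemma the repair is unnecessary, since the Berry--Esseen route sidesteps the characteristic-function analysis entirely.
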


\begin{proof}
Let $(\tilde Z_{ni}, \, n\ge1,\, i\ge0)$ be a collection of
independent random variables such that the law of $\tilde Z_{ni}$ is
$G_{(A_i-A_{i-n})\theta_n}$ in the notation of \eqref{e:G.theta}. 
 Then for large $n$, 
\begin{equation}
\label{lpm.l1.eq4} \tilde S_n\eid A_0\tilde Z_{n0}+(A_n-A_0)\tilde Z_{nn}+ \sum_{i=[n^\beta t_1]+1}^{n-[n^\beta t_2]-1}A_i\tilde Z_{ni}+\sum_{i=n+[n^\beta t_3]+1}^\infty(A_i-A_{i-n})\tilde Z_{ni}\,.
\end{equation}

The proof applies to \eqref{lpm.l1.eq4} the bound 
\eqref{e:BE.nonid} in the appendix, with $n=\infty$. For any $z\in\bbr$
\begin{align}\label{lpm.l1.eq5}
& \left|P\left(\tilde S_n-E(\tilde S_n)\le z\sqrt{\Var(\tilde S_n)}\right)-\Phi(z)\right| \\
  & \le C_u\left(\Var(\tilde   S_n)\right)^{-3/2}
    \sum_{i=0}^\infty|A_i-A_{i-n}|^3
    E\left(|\tilde  Z_{ni}-E\tilde  Z_{ni}|^3\right),   \,   n\ge1 \,. \nonumber
\end{align}
 It is immediate from \eqref{e:power.a} that 
 \begin{equation} \label{e:A.incr}
\sup_{i\ge0}\left|A_i-A_{i-n}\right|=O(n^{1-\alpha})\,, 
\end{equation}
so that 
$$
\lim_{n\to\infty}\theta_n\sup_{i\ge0}\left|A_i-A_{i-n}\right|=0\,.
$$
It follows from \eqref{e:tilt.facts} that 
\begin{equation}
\label{lpm.l1.eq6} 
 E\tilde Z_{ni}\to 0, \ \  \var(\tilde Z_{ni})\to \sigma_Z^2, \ \ 
E\left(|\tilde Z_{ni}-E\tilde
   Z_{ni}|^3\right)\to \int_{-\infty}^\infty |z^3|  \, F_Z(dz)
\end{equation}
uniformly in $i$ as $n\to\infty$.  Since it is an elementary
conclusion from Lemma \ref{lpm.l0} that for any $\kappa>1/\alpha$, 
\begin{align}\label{lpm.l1.eqcube}
\sum_{i=0}^\infty|A_i-A_{i-n}|^\kappa=O\bigl(n^{\kappa+1-\kappa\alpha}\bigr), 
\end{align}
it follows from \eqref{lpm.l1.eq5} that 
\begin{align*}
&\sup_{z\in\bbr}\left|P\left(\tilde S_n-E(\tilde S_n)\le z\sqrt{\Var(\tilde S_n)}\right)-\Phi(z)\right|\\
&=O\left(n^{4-3\alpha}\left(\Var(\tilde S_n)\right)^{-3/2}\right)\,.
\end{align*}
Using \eqref{lpm.l1.eq6} again we see that 
\begin{align} \label{e:var.tilde}
\Var(\tilde
  S_n)&\sim\sigma_n^2-\sum_{i=1}^3\Var\left(\xi_n^i(t_i)\right) \sim
        C_\alpha \sigma^2_Zn^{3-2\alpha}, 
\end{align}
with the second equivalence following from various claims in Lemma \ref{lpm.l0}.  Thus,
\begin{equation}
\label{lpm.l1.eq7}\sup_{z\in\bbr}\left|P\left(\tilde S_n-E(\tilde S_n)\le z\sqrt{\Var(\tilde S_n)}\right)-\Phi(z)\right|=O(n^{-1/2})=o\left(\eta_n^{-1}\right)\,.
\end{equation}

Therefore, for  $x\in\bbr$ and $h>0$, as $n\to\infty$,
\begin{align*}
&P\left(\eta_n\sigma_n^{-1}\left(\tilde S_n-E(\tilde S_n)+\gamma_n\right)\in[x,x+h]\right)\\
&                 =o\left(\eta_n^{-1}\right)
  +\int_\bbr \one\bigl[ 
\Var(\tilde S_n)^{-1/2}(x\eta_n^{-1}\sigma_n-\gamma_n)\leq z\\
&\hspace{1in}  \leq
      \Var(\tilde S_n)^{-1/2}((x+h)\eta_n^{-1}\sigma_n-\gamma_n)\bigr]\phi(z)\, dz,
\end{align*}
where $\phi$ is the standard normal density.  The assumptions on
$\gamma_n$ and  $\eta_n$ along with \eqref{e:var.tilde}
imply that the integration interval shrinks towards the origin. Thus,
the integral above is asymptotically equivalent to 
$\eta_n^{-1}\phi(0)h$, and  \eqref{lpm.l1.eq2} follows.  Boundedness
of $\phi$ in the above integral establishes \eqref{lpm.l1.eq3}.
\end{proof}

We now look more closely at the processes defined in \eqref{lpm.eq0},
\eqref{e:middle.pr} and \eqref{lpm.eq1}. The next lemma describes the
limiting distribution of their increments under the same change of measure as in the
previous lemma. 

\begin{lemma}\label{lpm.l2} Suppose that $\theta_n\in\bbr$ satisfies $
\theta_n=o\left(n^{-(1-\alpha)}\right)$. 
Fix $0\le s<t$ and consider random variables with the laws 
\[
P(U_{ni}\in dx)=c_{ni}e^{\theta_nx}P\left(\xi_n^i(t)-\xi_n^i(s)\in
  dx\right), \, i=1,2,3, \, n\ge1\,,
\]
with appropriate $c_{ni}$.  Then, as $n\to\infty$,
\begin{equation}
\label{lpm.l2.eq1}n^{-(2-2\alpha)}\left(U_{n1}-E (U_{n1})\right)\Rightarrow N\bigl(0, K_1\sigma_Z^2\left(t^{3-2\alpha}-s^{3-2\alpha}\right)\bigr)\,,
\end{equation}
where $K_1$ is given in \eqref{lpm.eq.defK1},  and for $i=2,3$, 
\begin{equation}
\label{lpm.l2.eq2}n^{-(1-\alpha+\beta/2)}\left(U_{ni}-E (U_{ni})\right)\Rightarrow N \bigl(0,  (1-\alpha)^{-2}\sigma_Z^2(t-s)\bigr)\,.
\end{equation}
\end{lemma}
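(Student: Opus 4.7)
The plan is to realize each $U_{ni}$ as a sum of independent but non-identically distributed random variables and then invoke the Berry-Esseen bound \eqref{e:BE.nonid} that was already used in the proof of Lemma \ref{lpm.l1}. Let $(\tilde Z_{nk})$ be independent with $\tilde Z_{nk}\sim G_{(A_k-A_{k-n})\theta_n}$, cf.\ \eqref{e:G.theta}. Exactly as in \eqref{lpm.l1.eq4}, for $n$ large enough that $[n^\beta t]<n$,
\begin{align*}
U_{n1}&\eid\sum_{k=[n^\beta s]+1}^{[n^\beta t]}A_k\tilde Z_{nk},\quad U_{n2}\eid\sum_{k=n-[n^\beta t]}^{n-[n^\beta s]-1}A_k\tilde Z_{nk},\\
U_{n3}&\eid\sum_{k=n+[n^\beta s]+1}^{n+[n^\beta t]}(A_k-A_{k-n})\tilde Z_{nk},
\end{align*}
where in the first two sums $A_{k-n}=0$ since $k<n$. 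By \eqref{e:A.incr} and the hypothesis $\theta_n=o(n^{-(1-\alpha)})$, the tilt parameters $(A_k-A_{k-n})\theta_n$ tend to zero uniformly in $k$, so \eqref{e:tilt.facts} yields $\var(\tilde Z_{nk})\to\sigma_Z^2$ and $E|\tilde Z_{nk}-E\tilde Z_{nk}|^3\to E|Z|^3$, uniformly in $k$.

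The next step is to identify the asymptotic variances. For $U_{n1}$, the argument that gives \eqref{lpm.l0.eq3}, applied to the window $([n^\beta s],[n^\beta t]]$ in place of $[0,[n^\beta t]]$, produces
\[
\var(U_{n1})\sim K_1\sigma_Z^2 n^{4-4\alpha}\bigl(t^{3-2\alpha}-s^{3-2\alpha}\bigr),
\]
using $\beta(3-2\alpha)=4-4\alpha$. For $U_{n2}$ and $U_{n3}$, the relevant coefficients $A_k-A_{k-n}$ lie in a window of width $O(n^\beta)=o(n)$ around $k=n$, on which \eqref{lpm.l0.eq1} yields $A_k-A_{k-n}\sim(1-\alpha)^{-1}n^{1-\alpha}$ uniformly; the same reasoning underlying \eqref{lpm.l0.eq3.1} then gives
\[
\var(U_{ni})\sim(1-\alpha)^{-2}\sigma_Z^2 n^{2-2\alpha+\beta}(t-s),\quad i=2,3.
\]
Dividing by the squares of the normalizations in \eqref{lpm.l2.eq1}--\eqref{lpm.l2.eq2} reproduces exactly the claimed limit variances.

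The final ingredient is the Berry-Esseen check
\[
\frac{\sum_k|A_k-A_{k-n}|^3\,E|\tilde Z_{nk}-E\tilde Z_{nk}|^3}{\var(U_{ni})^{3/2}}\longrightarrow 0.
\]
For $U_{n1}$, $\sum_{k\le[n^\beta t]}A_k^3=O(n^{\beta(4-3\alpha)})$ by \eqref{lpm.l1.eqcube}-type bookkeeping, while the denominator is of order $n^{6-6\alpha}$; a short algebraic computation gives $\beta(4-3\alpha)-(6-6\alpha)=-2(1-\alpha)/(3-2\alpha)<0$. For $U_{n2}$ and $U_{n3}$, the numerator is $O(n^{\beta+3-3\alpha})$ and the denominator is of order $n^{(3/2)(\beta+2-2\alpha)}$, so the exponent in the ratio simplifies to $-\beta/2<0$. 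Applying \eqref{e:BE.nonid} then delivers the asserted Gaussian weak limits.

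The main obstacle is essentially bookkeeping: pinning down the correct summation range for each $U_{ni}$, propagating the uniform convergence of the tilted moments across these ranges, and keeping track of how $\beta$ and $\alpha$ conspire in both the variance asymptotics and the Berry-Esseen denominator. Beyond these algebraic checks, the result is a direct application of a non-i.i.d.\ CLT to the three explicit sums of independent tilted variables above.
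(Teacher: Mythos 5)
Your proposal is correct and follows essentially the same route as the paper: the same realization of each $U_{ni}$ as a sum of independent exponentially tilted variables (exactly as in \eqref{lpm.l1.eq4}), the same variance asymptotics via Lemma \ref{lpm.l0}, and a third-moment CLT criterion — the paper verifies the Lindeberg/Lyapunov condition while you invoke the Berry--Esseen bound \eqref{e:BE.nonid}, which is the same estimate in quantitative form. Your exponent bookkeeping ($-2(1-\alpha)/(3-2\alpha)$ and $-\beta/2$) checks out.
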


\begin{proof} For large $n$, 
\[
U_{n1}\eid \sum_{i=[n^\beta s]+1}^{[n^\beta t]} A_i\tilde Z_{ni}
\]
with $(\tilde Z_{ni})$  as in  the previous lemma. That is,
$U_{n1}-E(U_{n1})$ is the sum of
independent zero mean random variables. By \eqref{lpm.l1.eq6} and
\eqref{lpm.l0.eq3}, 
\[
\Var(U_{n1})\sim\sigma_Z^2\sum_{i=[n^\beta s]+1}^{[n^\beta t]}A_i^2\sim K_1\sigma_Z^2 n^{4-4\alpha}\left(t^{3-2\alpha}-s^{3-2\alpha}\right)\,,
\]
and a similar calculation using the third moment bound in \eqref{lpm.l1.eq6} 
verifies the Lindeberg conditions of the central limit theorem. Hence
\eqref{lpm.l2.eq1} follows, and the calculations for \eqref{lpm.l2.eq2} are similar. 
\end{proof}

Consider the overshoot defined by
\begin{equation}\label{lpm.eq.defTn}
T_n^*=S_n-n\vep, \, n\ge1\,.
\end{equation}
Conditionally on the event $E_0=E_0(n,\vep)$ in \eqref{e:E.j}
the overshoot is nonnegative. 
The next lemma is a joint weak limit theorem for the joint law of the overshoot
and the 
processes defined in \eqref{lpm.eq0}, 
\eqref{e:middle.pr} and \eqref{lpm.eq1}. The joint law is computed
conditionally on $E_0$. 

\begin{lemma}\label{lpm.l3}
Let 
\begin{equation}
\label{eq.defzetan}\zeta_n= n\vep/\sigma_n^2, \ n\ge1\,,
\end{equation}
 
Conditionally on $E_0$, as $n\to\infty$,
\begin{align*}
&\Biggl(\zeta_nT_n^*, \,  \left( n^{2\alpha-2}
                 \left(\xi_n^1(t)-\sum_{i=1}^{[n^\beta
                 t]}A_i\int_{-\infty}^\infty z\, G_{\zeta_n A_i}(dz)\right), \,  t \ge 0 \right), \\
&\,\,\Bigl(n^{\alpha-\beta/2-1}   \left( \xi_n^{ 2 }(t) -  \sum_{i=n-[n^\beta t]}^{n - 1}   A_i
      \int_{-\infty}^\infty z\, G_{\zeta_n A_i}(dz)\right), \, t\ge0\Bigr),\\
&\,\,\Bigl(n^{\alpha-\beta/2-1} 
\left( \xi_n^3(t) -  \sum_{i=n+1}^{n + [n^\beta t]} (A_i-A_{i-n})\int_{-\infty}^\infty z\, G_{\zeta_n (A_i-A_{i-n})}(dz)\right), \, t\ge0\Bigr)  \Biggr)
\end{align*}
\begin{align*}
\Rightarrow& \Bigl(T_0,\, \bigl(K_1^{1/2}\sigma_Z B_1(t^{3-2\alpha}), \, t\ge0\bigr),\\
&\,\,\,\,\,\,\bigl((1-\alpha)^{-1}\sigma_Z B_2(t), \, t\ge0\bigr),\bigl((1-\alpha)^{-1}\sigma_Z B_3(t), \, t\ge0\bigr)\Bigr)\,,
\end{align*}
in finite dimensional distributions, where $T_0$ is a standard
exponential random variable independent of independent standard
Brownian motions $B_1$, $B_2$,  and $B_3$, $K_1$ is the constant in \eqref{lpm.eq.defK1}
and $G_\theta$ is the exponentially tilted law in    \eqref{e:G.theta}. 
\end{lemma}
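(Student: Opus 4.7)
My plan is to replace the conditioning on $E_0$ by an exponential change of measure, factorize the resulting joint law using the independence of disjoint pieces of the moving average, and extract the exponential overshoot from a Laplace-type integral. Fix the finite collection of time points at which the FDD is evaluated, choose $t^*$ strictly larger than all of them, and use \eqref{lpm.l0.eq8} to decompose
\begin{equation*}
S_n = \bar{S}_n + \xi_n^1(t^*) + \xi_n^2(t^*) + \xi_n^3(t^*),
\end{equation*}
where $\bar{S}_n$ collects the noise variables $Z_k$ whose indices lie outside the three windows defining $\xi_n^i(t^*)$. These four summands involve disjoint noise variables and are therefore independent. Set $\theta_n = \zeta_n = n\vep/\sigma_n^2$ and introduce the tilted measure $\tilde P_n$ under which each $Z_k$, with coefficient $c_k = A_{n-1-k}-A_{-1-k}$ in $S_n$, has law $G_{\theta_n c_k}$; independence survives, so for any bounded continuous FDD test function $\Phi$,
\begin{equation*}
E[\Phi\cdot\one_{E_0}] = e^{\Lambda_n - \theta_n n\vep}\,\tilde E_n\bigl[\Phi\,e^{-\theta_n T_n^*}\one_{T_n^*\ge 0}\bigr],\qquad \Lambda_n=\sum_k \varphi_Z(\theta_n c_k).
\end{equation*}

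Conditioning on $\xi_n^i(t^*)=\omega^i$ and using $\tilde P_n$-independence, the density of $\bar S_n$ near $n\vep - \sum\omega^i$ is controlled by Lemma \ref{lpm.l1}. The relevant shift is
$$n\vep+y-\textstyle\sum\omega^i-E_{\tilde P_n}(\bar S_n)=\bigl(n\vep-E_{\tilde P_n}(S_n)\bigr)+y+\sum\bigl(E_{\tilde P_n}(\xi_n^i(t^*))-\omega^i\bigr).$$
Each summand is $o(\sigma_n)$: the first by the Taylor expansion \eqref{e:tilt.facts} combined with the moment-matching hypothesis \eqref{e:moment.eq} and the tail bound $\sum_k|c_k|^{\kappa+1}=O(n^{\kappa+2-(\kappa+1)\alpha})$ of Lemma \ref{lpm.l0}; the second because $y=O(\theta_n^{-1})=O(n^{2-2\alpha})$; and the third by the FDD scalings produced by Lemma \ref{lpm.l2}. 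Lemma \ref{lpm.l1} then yields $\tilde p_{\bar S_n}(n\vep+y-\sum\omega^i)=(2\pi\sigma_n^2)^{-1/2}(1+o(1))$, uniformly in the relevant window. The same computation applied directly to $P(E_0)$ gives $P(E_0)\sim e^{\Lambda_n-\theta_n n\vep}/(\theta_n\sqrt{2\pi\sigma_n^2})$, since under $\tilde P_n$ the random variable $T_n^*$ is approximately $N(0,\sigma_n^2)$ with mean offset much smaller than $\theta_n^{-1}$.

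Dividing the two expressions, the conditional joint density factorizes as
\begin{equation*}
p(y,\omega^1,\omega^2,\omega^3\mid E_0)\sim\theta_n e^{-\theta_n y}\one_{y\ge 0}\,\prod_{i=1}^3\tilde p_{\xi_n^i(t^*)}(\omega^i),
\end{equation*}
exhibiting both the asymptotic independence of the overshoot from the three processes and the mutual independence of the latter (already present under $\tilde P_n$). The overshoot marginal yields $\zeta_n T_n^*\Rightarrow T_0\sim\mathrm{Exp}(1)$, while Lemma \ref{lpm.l2} applied to increments of $\xi_n^i$ on disjoint time intervals (independent under $\tilde P_n$ since they involve disjoint noise variables), followed by a routine telescoping, delivers the FDD convergence of each process to the Brownian limits $K_1^{1/2}\sigma_Z B_1(t^{3-2\alpha})$ and $(1-\alpha)^{-1}\sigma_Z B_i(t)$ for $i=2,3$.

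The main obstacle is keeping the local-limit approximation of Lemma \ref{lpm.l1} uniformly valid in the joint scaling range of $(y,\omega^1,\omega^2,\omega^3)$, which amounts to simultaneously ensuring $\gamma_n=o(n^{3/2-\alpha})$ and $\theta_n\bigl|E_{\tilde P_n}(S_n)-n\vep\bigr|=o(1)$; the latter is also what keeps the $P(E_0)$ asymptotic clean. The moment-matching assumption \eqref{e:moment.eq} is precisely what forces the Taylor-expansion error in the tilted mean of $S_n$ to remain small enough for both conditions to hold simultaneously across the full range $1/2<\alpha<1$. Once those bounds are in place, the rest is routine factorization and Laplace-type approximation.
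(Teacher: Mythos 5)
Your proposal is correct and uses the same machinery as the paper (exponential tilting, independence of the disjoint noise blocks, the local CLT of Lemma \ref{lpm.l1} for the ``corrupted'' sum, Lemma \ref{lpm.l2} for the tilted increments, and a Laplace-type integral producing the exponential overshoot), but it differs in one genuine structural choice: you tilt directly at $\theta_n=\zeta_n=n\vep/\sigma_n^2$, whereas the paper tilts at the exact saddle point $\theta_n=n\tau_n/\sigma_n^2$ with $\tau_n$ defined by $\psi_n'(\tau_n)=\vep$, so that the tilted mean of $S_n$ is \emph{exactly} $n\vep$ (identity \eqref{lpm.l3.eq4}) and no mean-mismatch term ever enters the Laplace integral. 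The trade-off is real on both sides. Your route must control $\theta_n\bigl|E_{\tilde P_n}(S_n)-n\vep\bigr|=O\bigl(\zeta_n^{\kappa+1}\sum_j|A_j-A_{j-n}|^{\kappa+1}\bigr)=O\bigl(n^{(\kappa+1)\alpha-\kappa}\bigr)$, which tends to zero precisely because $\kappa>\alpha/(1-\alpha)$ — so you genuinely need the moment-matching hypothesis \eqref{e:moment.eq} for $\alpha\ge 2/3$, while the paper's proof of this lemma deliberately avoids \eqref{e:moment.eq} altogether (the paper notes that none of the lemmas up to this point use the extra assumptions). In exchange, your centring constants $\int z\,G_{\zeta_n A_i}(dz)$ come out in exactly the form stated in the lemma, so you skip the paper's final page of work (\eqref{lpm.l3.eq6}--\eqref{lpm.l3.eq11}) converting the $\theta_n$-tilted means into $\zeta_n$-tilted means via $\tau_n=\vep+o(n^{1-\alpha-\beta/2})$. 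One presentational caution: Lemma \ref{lpm.l1} gives uniform interval probabilities, not densities, so the step where you write $\tilde p_{\bar S_n}(\cdot)\sim(2\pi\sigma_n^2)^{-1/2}$ should be phrased, as the paper does, as convergence of the integral of $e^{-z}$ against the tilted law of $\theta_n(\tilde S_n-\mu_n+\gamma_n)$ together with the uniform bound \eqref{lpm.l1.eq3} to truncate the tail; this is cosmetic and does not affect the validity of the argument.
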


\begin{proof}
Denote 
\begin{equation}\label{eq.defpsin}
\psi_n(s)=\frac{\sigma_n^2}{n^2}\log E\left[\exp\left(s\frac{n}{\sigma_n^2}S_n\right)\right]=\frac{\sigma_n^2}{n^2}\sum_{j=0}^\infty \varphi_Z\left(\sigma_n^{-2}n(A_j-A_{j-n})s\right)\,,
\end{equation}
where the second equality follows from \eqref{lpm.l0.eq8}. By
\eqref{lpm.l3.eq1}, \eqref{lpm.l0.eq2} and \eqref{e:A.incr} we see
that 
\begin{equation} \label{e:psi.lim}
\lim_{n\to\infty} \psi_n(s)=s^2/2
\end{equation}
uniformly for $s$ in a compact set. Furthermore, the sum in
\eqref{eq.defpsin} can be differentiated term by term, and it follows
by \eqref{e:phiprime}, \eqref{lpm.l0.eq2} and \eqref{e:A.incr} that 
\begin{equation}
\label{lpm.l3.eq2}
\lim_{n\to\infty}\psi_n'(s) = s, 
\end{equation}
also uniformly on compact sets.  Since $\psi_n'$ is increasing and
continuous, for large $n$ there exists a unique $\tau_n>0$ such that 
\begin{equation}\label{eq.deftaun}
\psi_n'(\tau_n)=\vep\,.
\end{equation}
It is immediate that $\tau_n\to\vep$ as $n\to\infty$.  Denoting
\begin{equation}
\label{eq.defthetan}\theta_n=\sigma_n^{-2}n\tau_n, \, n\ge1\,,
\end{equation}
we have 
\begin{align}\label{lpm.l3.eq3}
\left( E\left(e^{\theta_nS_n}\right)
       \right)^{-1}E\left(S_ne^{\theta_nS_n}\right) = n\vep.
\end{align}

Fix $k\ge1$ and for each $i=1,2,3$ fix points 
$0=t_{i0}< t_{i1}<\ldots<t_{ik}$.
Denote 
\[
\bar S_n=S_n-\sum_{i=1}^3\xi_n^i(t_{ik}), \, n\ge1\,.
\]

Let $U_{nij},\, n\ge1,\,  i=1,2,3,\,  j=1,\ldots,k, \, \tilde S_n, \,
n\geq 1$ be 
independent random variables, with 
\begin{align*}
&P\left(U_{nij}\in dx\right) \\
&=\left(E\left(e^{\theta_n(\xi^i_n(t_{ij})-\xi^i_n(t_{i\,j-1}))}\right)\right)^{-1}
                 e^{\theta_nx}
                 P\left(\xi^i_n(t_{ij})-\xi^i_n(t_{i\,j-1})\in
                 dx\right)\,,
\end{align*}
and
\begin{align*}
 &P\left( \tilde S_n\in dx\right) = \left(E\left(e^{\theta_n\bar
   S_n}\right)\right)^{-1} e^{\theta_nx}    P\left( \bar S_n\in dx\right) 
   \end{align*}
for $n\ge1$, $i=1,2,3$ and $j=1,\ldots,k$.  Let
\begin{equation} \label{e:mus}
\mu_{nij}=E\left(U_{nij}\right), \ \mu_n=E(\tilde S_n).
\end{equation}
It follows from  \eqref{lpm.l3.eq3} that 
\begin{equation}
\label{lpm.l3.eq4}\mu_n+\sum_{i=1}^3\sum_{j=1}^k\mu_{nij}=n\vep, \, n\ge1\,.
\end{equation}

Let $t>0$ and  $(\alpha_{ij})\subset\bbr$. We have
\begin{align*}
& P\biggl(\bigl\{  T_n^*>t\sigma_n^2/n\vep
                 \bigr\}\cap\Bigl(\bigcap_{j=1}^k\bigl\{ n^{2\alpha-2}\left(\xi^1_n(t_{1j})-\xi^1_n(t_{1\,j-1}) -\mu_{n1j}\right)>\alpha_{1j} \bigr\}\Bigr)\\
&\cap\Bigl(\bigcap_{2\le i\le3,\, 1\le j\le k}\bigl\{n^{\alpha-\beta/2-1}\left(\xi_n^i(t_{ij})-\xi_n^i(t_{i\,j-1})-\mu_{nij}\right)>\alpha_{ij}\bigr\}\Bigr)\biggr)
\end{align*}
\begin{align*}
&=\int_{\bbr^{3k+1}}                                                                                                                 \one\biggl(x>n\vep+t\sigma_n^2/n\vep-\sum_{i=1}^3\sum_{j=1}^ks_{ij}\biggr)\\
&\hskip 0.65in\one\left(s_{1j}>n^{2-2\alpha}\alpha_{1j}+\mu_{n1j}\,,1\le j\le k\right)\\
&\hskip 0.65in  \one\left(s_{ij}>n^{1-\alpha+\beta/2}\alpha_{ij}+\mu_{nij}\,,i=2,3\,,j=1,\ldots, k\right) \\
&\hskip 1.5in P(\bar S_n\in dx)\prod_{i=1}^3\prod_{j=1}^k P\bigl(\xi^i_n(t_{ij}) - \xi^i_n(t_{i\,j-1}) \in ds_{ij}\bigr)
\end{align*}
\begin{align*}
  &=\int_{\bbr^{3k+1}}
    \one\biggl(x>n\vep+t\sigma_n^2/n\vep-\sum_{i=1}^3\sum_{j=1}^ks_{ij}\biggr)\\
  &\hskip 0.65in\one\left(s_{1j}>n^{2-2\alpha}\alpha_{1j}+\mu_{n1j}\,,1\le j\le k\right)\\
& \hskip 0.65 in\one\left(s_{ij}>n^{1-\alpha+\beta/2}\alpha_{ij}+\mu_{nij}\,,i=2,3\,,1\le j\le k\right)
 \\
& \hskip 1.5in
      \exp\biggl(-\theta_nx-\theta_n\sum_{i=1}^3\sum_{j=1}^ks_{ij}\biggr)P\left(\tilde
      S_n\in dx\right)\\
      &\hskip1.5in E\left(e^{\theta_nS_n}\right)\prod_{i=1}^3\prod_{j=1}^kP(U_{nij}\in
      ds_{ij})
\end{align*}
\begin{align*}
 &=c_n\int_{\bbr^{3k}}\one\bigl(\min_{i,j}(u_{ij}-\alpha_{ij})>0\bigr)
   \prod_{j=1}^kP\left(n^{2\alpha-2}\bigl(U_{n1j}-\mu_{n1j}\bigr)\in
   du_{1j}\right)  \\
  &\hskip 1.9in\prod_{i=2}^3\prod_{j=1}^k
   P\left(n^{\alpha-\beta/2-1}\bigl(U_{nij}-\mu_{nij}\bigr)\in
   du_{ij}\right) \\
& \hskip 0.3in \int_\bbr e^{-z} \one\bigl(z>t\theta_n \sigma_n^2/n\vep\bigr) 
P\Bigl(\theta_n\bigl(\tilde S_n-\mu_n +
                       \gamma_n(u_{11},\ldots,u_{3k})\bigr)\in dz\Bigr) \,,
\end{align*}
with
\begin{equation} \label{e:c.n}
  c_n=e^{-\theta_nn\vep} E\left(e^{\theta_nS_n}\right)
\end{equation}
and 
\[
\gamma_n(u_{11},\ldots,u_{3k})=n^{2-2\alpha}\sum_{j=1}^ku_{1j} +n^{1-\alpha+\beta/2}\sum_{i=2}^3\sum_{j=1}^ku_{ij}\,.
\]
Let $\theta_n$ be as above and $\eta_n=\sigma_n\theta_n$. 
For $n\ge1$,  we introduce the notation 
\begin{align*}
 & f_n(u_{11},\ldots,u_{3k})\\
  =&\eta_n\int_0^\infty   e^{-z}
  \one\bigl( z> t\theta_n \sigma_n^2/n\vep\bigr) 
P\Bigl(\theta_n\bigl(\tilde S_n-\mu_n + \gamma_n(u_{11},\ldots,u_{3k})\bigr)\in
                 dz\Bigr)\, .
   \end{align*}
 
Fix $(u_{ij})$ and let $u^{(n)}_{ij}\to u_{ij}$ as $n\to\infty$ for
all $i,j$.  Let us denote
$\gamma_n=\gamma_n\bigl(u_{11}^{(n)},\ldots,u_{3k}^{(n)}\bigr)$.
With $\theta_n$  and $\eta_n$ already defined, 
we use  Lemma \ref{lpm.l1} with this $\gamma_n$. It is
elementary to check that the hypothesis of the lemma  are satisfied.
Since $t\theta_n \sigma_n^2/n\vep\to t$, it follows
from  \eqref{lpm.l1.eq2}  that for all fixed  $T>t$,
\begin{align*}
&  \int_\bbr e^{-z} \one\bigl(t\theta_n \sigma_n^2/n\vep<z\leq T\bigr) 
P\Bigl(\theta_n\bigl(\tilde S_n-\mu_n +
                       \gamma_n\bigr)\in
                 dz\Bigr)\\
                 &   \sim\eta_n^{-1}(2\pi)^{-1/2}\int_t^Te^{-z}\, dz,
\end{align*}
and if follows from  \eqref{lpm.l1.eq3} that 
\begin{equation*}
  \lim_{T\to\infty}\limsup_{n\to\infty}\eta_n \int_\bbr e^{-z}
  \one\bigl( z> T\bigr) 
P\Bigl(\theta_n\bigl(\tilde S_n-\mu_n + \gamma_n\bigr)\in
                 dz\Bigr) =0\,,
\end{equation*}
showing that
\[
\lim_{n\to\infty}f_n\left(u^{(n)}_{11},\ldots,u^{(n)}_{3k}\right)=(2\pi)^{-1/2}e^{-t}\,.
\]
Another application of \eqref{lpm.l1.eq3} implies that
\[
 \sup_{\{u_{ij}\}\subset\bbr}f_n(u_{11},\ldots,u_{3k})<\infty\,.
\]
It follows immediately from Lemma \ref{lpm.l2} and bounded convergence
theorem that
\begin{align} \label{e:int.conv.gen}
&E\Bigl[ f\bigl( n^{2\alpha-2}(U_{n11}-\mu_{n11}), \ldots,
n^{\alpha-\beta/2-1}(U_{n3k}-\mu_{n3k} )   \bigr) \\
\notag  & \one\Bigl( n^{2\alpha-2}(U_{n1j}-\mu_{n1j})>\alpha_{1j},  \ n^{\alpha-\beta/2-1}(U_{nij}-\mu_{nij} )
>\alpha_{ij}, \, i=2,3, \\ 
\notag &\hskip3in j=1,\ldots, k,\Bigr)   \Bigr]
\\
\notag \to& 
(2\pi)^{-1/2}P\left(T_0>t\,,G_{ij}>\alpha_{ij}\text{ for all }i,j\right)\,,
\end{align}
with $T_0$ standard exponential and $(G_{ij}:i=1,2,3,\,j=1,\ldots,k)$
independent zero mean Gaussian random variables, independent of $T_0$,
with 
\[
\Var(G_{1j})=K_1\sigma_Z^2\left(t_{1j}^{3-2\alpha}-t_{1\,j-1}^{3-2\alpha}\right),\,
1\le j\le k\,,
\]
and for $i=2,3$,
\[
\Var(G_{ij})=(1-\alpha)^{-2}\sigma_Z^2(t_{ij}-t_{i\,,j-1}), \, 1\le j\le k\,.
\]
A simple way to verify the convergence above is to appeal to the
Skorohod representation and replace the weak convergence in Lemma
\ref{lpm.l2} by the a.s. convergence.

Notice that using \eqref{e:int.conv.gen} with 
$t=0$ and $\alpha_{ij}=-\infty$ for all $i,j$ tells us that
\begin{equation} \label{e:P.E0}
P(E_0)\sim (2\pi)^{-1/2}c_n/\eta_n
= (2\pi)^{-1/2}e^{-\theta_nn\vep}
  E\left(e^{\theta_nS_n}\right)/(\sigma_n\theta_n). 
\end{equation}
Dividing \eqref{e:int.conv.gen} by \eqref{e:P.E0} gives us the
statement of the lemma apart from a possibly different centring.  In
order to complete the proof, it suffices to show that as $n\to\infty$,
for $j=1,\ldots,k$, 
\begin{align}
\label{lpm.l3.eq6}
\mu_{n1j}&=\sum_{i=[n^\beta t_{1 j-1}]+1}^{[n^\beta
           t_{1j}]}A_i\int_{-\infty}^\infty z\, G_{\zeta_n A_i}(dz)+o\left(n^{2-2\alpha}\right)\,,\\
\label{lpm.l3.eq7}
\mu_{n2j}&= \sum_{i=n-[n^\beta t_{nj}]}^{n -[n^\beta t_{n j-1}] }
           A_i\int_{-\infty}^\infty z\, G_{\zeta_n A_i}(dz)
           +o\left(n^{1+\beta/2-\alpha}\right)\,,\\
\label{lpm.l3.eq9}
\mu_{n3j}&= \sum_{i=n+[n^\beta t_{n j-1}]}^{n + [n^\beta t_{nj}]}
           (A_i-A_{i-n})\int_{-\infty}^\infty z\, G_{\zeta_n (A_i-A_{i-n})}(dz) +o\left(n^{1+\beta/2-\alpha}\right)\,.
\end{align}
For simplicity of notation we prove these statements for $j=1$. 
 For
$\theta_n$ as in \eqref{eq.defthetan}, let 
$(\tilde Z_{ni}, \, n\ge1,\, i\ge0)$ be a collection of
independent random variables such that the law of $\tilde Z_{ni}$ is
$G_{(A_i-A_{i-n})\theta_n}$. Since both $\theta_nA_i$ and $\zeta_nA_i$
  converge to zero uniformly in $i\leq n^\beta t_{11}$, we can use
  \eqref{e:tilt.facts} to write 
\begin{align*}
\mu_{n11}&=\sum_{i=1}^{[n^\beta t_{11}]}A_iE\left(\tilde Z_{ni}\right) 
=\sum_{i=1}^{[n^\beta t_{11}]}A_i\int_{-\infty}^\infty z\, G_{\theta_nA_i}(dz)\\
&=\sum_{i=1}^{[n^\beta t_{11}]}A_i\int_{-\infty}^\infty z\,
   G_{\zeta_nA_i}(dz)+o\left(\zeta_n\sum_{i=1}^{[n^\beta t_{11}]}A_i^2\right)\,. 
\end{align*}
It follows from \eqref{lpm.l0.eq2} and \eqref{lpm.l0.eq3} that 
\[
 \zeta_n\sum_{i=1}^{[n^\beta
  t_{11}]}A_i^2=o\left(n^{2-2\alpha}\right)\,, 
\]
and we obtain \eqref{lpm.l3.eq6}  (for $j=1$). 

For \eqref{lpm.l3.eq7} with $j=1$ we notice that by
\eqref{e:phiprime}, 
 \begin{equation}
\label{lpm.l3.eq10}E\left(\tilde Z_{ni}\right)=\theta_n(A_i-A_{i-n})\sigma_Z^2+O\left(\theta_n^2(A_i-A_{i-n})^2\right)\,,
\end{equation}
uniformly in $i\ge0$,  as $n\to\infty$.  Thus,
\begin{align*}\nonumber
\mu_{n21}& =\sigma_Z^2\sigma_n^{-2}n\tau_n\sum_{i=n-[n^\beta t_{21}]}^{n-1}A_i^2+O\left( \theta_n^2\sum_{i=n-[n^\beta t_{21}]}^{n-1}A_i^3 \right) \,.
\end{align*}
It follows from Lemma \ref{lpm.l0}  that 
\begin{align*}
\theta_n^2\sum_{i=n-[n^\beta t_{21}]}^{n-1}A_i^3&=O\left(n^{\alpha+\beta-1}\right)
= o\left(n^{1-\alpha+\beta/2}\right)\,.
\end{align*}
Therefore,
\begin{equation}\label{lpm.l3.eq8}
\mu_{n21}=\sigma_Z^2\sigma_n^{-2}n\tau_n\sum_{i=n-[n^\beta t_{21}]}^{n-1}A_i^2+o\left(n^{1-\alpha+\beta/2}\right)
\end{equation}
and, similarly, 
\[
\sum_{i=n-[n^\beta t_{21}]}^{n - 1} A_i\int_{-\infty}^\infty z\, G_{\zeta_n A_i}(dz) = \sigma_Z^2\zeta_n\sum_{i=n-[n^\beta t_{21}]}^{n-1}A_i^2+o\left(n^{1-\alpha+\beta/2}\right)\,.
\]
Another appeal to Lemma \ref{lpm.l0} shows that  for
\eqref{lpm.l3.eq7}  we only need to argue that 
\begin{equation}\label{lpm.l3.eq11}
\tau_n=\vep+o\left(n^{1-\alpha-\beta/2}\right)\,,n\to\infty\,.
\end{equation}
However, by \eqref{e:tilt.facts}, 
\[
\psi_n'(s)=s+O\left(n\sigma_n^{-4}\sum_{j=0}^\infty(A_j-A_{j-n})^3\right)\,,
\]
uniformly for $s$ in compact sets.  Using this and
\eqref{lpm.l1.eqcube}, we obtain 
\begin{align*}
\vep&=\psi_n'(\tau_n)\\
&=\tau_n+O\left(n\sigma_n^{-4}\sum_{j=0}^\infty(A_j-A_{j-n})^3\right)\\
&=\tau_n+O(n^{\alpha-1})
=\tau_n+o\left(n^{1-\alpha-\beta/2}\right)\,.
\end{align*}
 This establishes \eqref{lpm.l3.eq11} and, hence, \eqref{lpm.l3.eq7}
 with $j=1$.  The proof  of \eqref{lpm.l3.eq9} is similar. 
\end{proof}

None of the statements proved so far required the additional
assumptions stated at the beginning of this section. These assumptions
start to play a role now.

The next several lemmas require additional notation designed to focus
on the contribution of individual noise variables on $S_n$. For
$n\ge1$ and $i,j\ge0$, $i\neq j$, we set 
\[
  S_n'(i)=S_n-(A_i-A_{i-n})Z_{n-i-1}\,,
\]
\[
S_n'(i,j)=S_n-(A_i-A_{i-n})Z_{n-i-1}-(A_j-A_{j-n})Z_{n-j-1}\,,
\]
and, with $\zeta_n$ given by \eqref{eq.defzetan}, we 
 let $\hat S_n$, $\hat S_{ni}$, $\hat S_n(i,j)$ be random
 variables with distributions 
\[
P(\hat S_n\in ds)\propto e^{\zeta_ns}P(S_n\in ds)\,,
\]
\[
P(\hat S_n(i)\in ds)\propto e^{\zeta_ns}P(S_n'(i)\in ds)\,,
\]
\[
P(\hat S_n(i,j)\in ds)\propto e^{\zeta_ns}P(S_n'(i,j)\in ds)\,.
\]

Denote the characteristic functions of $\sigma_n^{-1}(\hat
S_n-n\vep)$, $\sigma_n^{-1}(\hat S_n(i)-n\vep)$ and
$\sigma_n^{-1}(\hat S_n(i,j)-n\vep)$ by $\phi_n$,  $\phi_{ni}$ and
$\phi_{nij}$,  respectively.  For $\mu\in\bbr$ and $\sigma\ge0$ we
denote by $\phi_G(\mu;\sigma^2;\cdot)$ the characteristic function of
$N(\mu,\sigma^2)$.

\begin{lemma}\label{lpm.newl0}
  Let $\kappa$ be given by \eqref{eq.kappa} and assume that
  \eqref{e:moment.eq} holds. Then 
the following statements hold uniformly in $t\in\bbr$:
\begin{equation}
\label{lpm.newl0.claim1}\left|\phi_n(t)-\phi_G(0;1;t)\right|=O\left(n^{1/2-\kappa(1-\alpha)}(1+|t|)^{\kappa+1}\right)\,,
\end{equation}
\begin{equation}
\label{lpm.newl0.claim2}\sup_{i\ge0}\left|\phi_{ni}(t)-\phi_G\left(\sigma_n^{-1}n\vep(\lambda_{ni}-1);\lambda_{ni};t\right)\right|=O\left(n^{1/2-\kappa(1-\alpha)}(1+|t|)^{\kappa+1}\right)\,,
\end{equation}
\begin{align}
\label{lpm.newl0.claim3}&\sup_{i,j\ge0 \atop i\neq
                          j}\left|\phi_{nij}(t)-\phi_G\left(\sigma_n^{-1}n\vep(\lambda_{nij}-1);\lambda_{nij};t\right)\right|\\
\notag &                          =O\left(n^{1/2-\kappa(1-\alpha)}(1+|t|)^{\kappa+1}\right)\,,
\end{align}
where for $n\ge1$ and $i,j\ge0$, $i\neq j$, we set 
\[
\lambda_{ni}=1-\frac{\sigma_Z^2}{\sigma_n^2}(A_i-A_{i-n})^2 , \ \ 
\lambda_{nij}=1-\frac{\sigma_Z^2}{\sigma_n^2}\left[(A_i-A_{i-n})^2+(A_j-A_{j-n})^2\right]\,.
\]
\end{lemma}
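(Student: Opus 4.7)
The plan is to express each of the three characteristic functions as a product over the contributions of the individual noise variables and then Taylor expand the log-Laplace transform $\varphi_Z$ around each tilting argument, exploiting the cumulant-matching implied by \eqref{e:moment.eq} to annihilate the intermediate-order terms. Setting $b_{nj}=A_j-A_{j-n}$ so that $S_n=\sum_j b_{nj}Z_{n-1-j}$ by \eqref{lpm.l0.eq8}, a direct computation from the definitions of $\hat S_n$ and $\zeta_n=n\vep/\sigma_n^2$ gives
\[
\log\phi_n(t)=-\frac{itn\vep}{\sigma_n}+\sum_{j=0}^\infty\Bigl[\varphi_Z\bigl(\zeta_nb_{nj}+itb_{nj}/\sigma_n\bigr)-\varphi_Z(\zeta_nb_{nj})\Bigr],
\]
with parallel formulas for $\phi_{ni}$ and $\phi_{nij}$ in which one, respectively two, summands are deleted. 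Because \eqref{eq.exp} makes $\varphi_Z$ entire, each summand has a Taylor expansion in the imaginary perturbation of any order, and I will truncate at order $\kappa+1$.

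The hypothesis \eqref{e:moment.eq} forces the first $\kappa$ cumulants of $Z$ to agree with those of $N(0,\sigma_Z^2)$, i.e.\ $\varphi_Z^{(k)}(0)=0$ for $1\le k\le\kappa$ with $k\ne2$ and $\varphi_Z''(0)=\sigma_Z^2$. Expanding each $\varphi_Z^{(k)}$ itself as a Taylor series about $0$ then yields $\varphi_Z^{(k)}(\theta)=O(|\theta|^{\kappa+1-k})$ as $\theta\to0$ for every $3\le k\le\kappa$, while standard Cauchy estimates on a fixed disc inside the domain of analyticity of $\varphi_Z$ give $|\varphi_Z^{(k)}(\theta)|\le Ck!\rho^{-k}$ uniformly in $k\ge\kappa+1$ and $|\theta|$ small, so the geometric tail of the expansion also contributes $O((|t|\,|b_{nj}|/\sigma_n)^{\kappa+1})$. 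Combining these, the Taylor remainder of each summand is pointwise bounded by $C(|t|\,|b_{nj}|/\sigma_n+\zeta_n|b_{nj}|)^{\kappa+1}$. Summing over $j$ and using \eqref{lpm.l1.eqcube} in the form $\sum_j|b_{nj}|^{\kappa+1}=O(n^{\kappa+2-(\kappa+1)\alpha})$, together with $\sigma_n^2\sim C_\alpha\sigma_Z^2n^{3-2\alpha}$ from \eqref{lpm.l0.eq2} and $\zeta_n=O(n^{2\alpha-2})$, each cross-term $|t|^k\sigma_n^{-k}\zeta_n^{\kappa+1-k}\sum_j|b_{nj}|^{\kappa+1}$ with $3\le k\le\kappa+1$ is $O(n^{1/2-\kappa(1-\alpha)}|t|^k)$; the worst exponent is reached at $k=3$ and the inequality $3/2-2\alpha\le1/2$ uses precisely $\alpha>1/2$. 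The linear and quadratic parts of the expansion combine with the drift $-itn\vep/\sigma_n$ to yield $-t^2/2$ up to the same error, the key identity being $\sum_jb_{nj}\varphi_Z'(\zeta_nb_{nj})=\zeta_n\sigma_Z^2\sum_jb_{nj}^2+O(\zeta_n^\kappa\sum_j|b_{nj}|^{\kappa+1})=n\vep+O(n^{2-\alpha-\kappa(1-\alpha)})$ by \eqref{lpm.l0.sigman}, with an analogous identity for the sum involving $\varphi_Z''$.

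This delivers $|\log\phi_n(t)+t^2/2|=O(n^{1/2-\kappa(1-\alpha)}(1+|t|)^{\kappa+1})$ on the range of $|t|$ where the Taylor expansion is legitimate. Exponentiating through $|e^u-e^v|\le|u-v|\exp(\max(\mathrm{Re}\,u,\mathrm{Re}\,v))$ (both exponents here have non-positive real parts) converts this into \eqref{lpm.newl0.claim1} on that range; for larger $|t|$ the inequality is trivial, since $|\phi_n|,|\phi_G|\le1$ while the right-hand side already exceeds $2$, and the choice \eqref{eq.kappa} guarantees $\kappa(1-\alpha)>1/2$, so that the cutoff $|t|\asymp n^{(\kappa(1-\alpha)-1/2)/(\kappa+1)}$ indeed tends to infinity. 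The bounds \eqref{lpm.newl0.claim2}--\eqref{lpm.newl0.claim3} follow verbatim from the same argument applied to the shortened products: the missing factor (or factors) shifts the first-cumulant contribution of the tilted partial sum from $n\vep$ to $n\vep\lambda_{ni}$ (resp.\ $n\vep\lambda_{nij}$) and the second-cumulant contribution from $\sigma_n^2$ to $\lambda_{ni}\sigma_n^2$ (resp.\ $\lambda_{nij}\sigma_n^2$), producing after the rescaling by $\sigma_n^{-1}$ and the shift by $-n\vep/\sigma_n$ exactly the shifted Gaussians in the statement. The main obstacle throughout is the exponent bookkeeping: every one of the Taylor remainder contributions must be pinned down so that its $n$-exponent does not exceed $1/2-\kappa(1-\alpha)$, and this balance is precisely what forces the peculiar choice \eqref{eq.kappa} of $\kappa$.
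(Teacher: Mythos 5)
Your proof is correct, but it takes a genuinely different route from the paper's. The paper argues on the characteristic function itself via a Lindeberg-type swapping scheme: it introduces independent Gaussians $G_{nj}\sim N((A_j-A_{j-n})\zeta_n\sigma_Z^2,\sigma_Z^2)$ whose first $\kappa$ moments agree with those of the tilted variables $\hat Z_{nj}$ up to the error \eqref{lpm.newl0.eq3} (the form that \eqref{e:moment.eq} takes after tilting), telescopes the difference of the two characteristic functions by replacing one summand at a time, and Taylor-expands $e^{i(U+V)}$ versus $e^{i(U+W)}$ around $U$ to order $\kappa$ with an order-$(\kappa+1)$ remainder; the per-summand errors are then accumulated through \eqref{lpm.l1.eqcube} exactly as you do. You instead pass to $\log\phi_n$, which is additive over $j$, and expand the cumulant generating function $\varphi_Z$ at the tilting points, using that \eqref{e:moment.eq} annihilates the cumulants of order $3,\dots,\kappa$ at the origin; your exponent bookkeeping (worst cross-term at $k=3$, cancellation of the drift via $\zeta_n\sigma_Z^2\sum_j(A_j-A_{j-n})^2=n\vep$, and the identification $\sigma_Z^2\sum_{j\neq i}(A_j-A_{j-n})^2=\lambda_{ni}\sigma_n^2$ for the shifted Gaussians in \eqref{lpm.newl0.claim2}--\eqref{lpm.newl0.claim3}) is accurate. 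The trade-off is that the paper's elementary bound $|e^{ix}-\sum_{m\le\kappa}(ix)^m/m!|\le|x|^{\kappa+1}/(\kappa+1)!$ is global in $t$, so its estimate is uniform over $t\in\bbr$ with no case split, whereas your expansion is only legitimate for $|t|=O\bigl(n^{(\kappa(1-\alpha)-1/2)/(\kappa+1)}\bigr)$ and you must (and correctly do) dispose of larger $|t|$ by noting the right-hand side then exceeds $2$; this cutoff is $o(n^{1/2})$, which also keeps all arguments of $\varphi_Z$ inside a fixed disc of analyticity guaranteed by \eqref{eq.exp}, so the Cauchy estimates for the remainder are uniform in $j$ and $n$. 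Both arguments are sound; yours is arguably more transparent in how the moment matching enters, the paper's avoids logarithms and any discussion of where $\phi_n$ is nonzero.
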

\begin{proof}
It is an elementary conclusion from \eqref{e:moment.eq} that, for each
$1\le i\le\kappa$, 
 \begin{equation}
\label{lpm.newl0.eq2} \left(\int_\bbr e^{\delta z}
  \,F_z(dz)\right)^{-1}\int_\bbr  z^ie^{\delta z} \,F_z(dz)=
\sigma_Z^iE\left[(G+\delta
  \sigma_Z)^i\right]+O\left(|\delta|^{\kappa-i+1}\right) 
\end{equation}
as
$\delta\to 0$, where $G$ is a standard Gaussian random variable.

Let $(\hat Z_{ni}:\, n\ge1,i\ge0)$ be a family of independent random
variables with each $\hat Z_{ni}\sim G_{(A_i-A_{i-n})\zeta_n}$, 
so that for $n\ge1$ and $i,j\ge0$, $i\neq j$ we have 
\[
\hat S_n\eid\sum_{k=0}^\infty\left(A_k-A_{k-n}\right)\hat Z_{nk}\,,
\]
$$
\hat
S_n(i)\eid\sum_{k\in\{0,1,2,\ldots\}\setminus\{i\}}\left(A_k-A_{k-n}\right)\hat
Z_{nk}\,, 
$$
$$
\hat
S_n(i,j)\eid\sum_{k\in\{0,1,2,\ldots\}\setminus\{i,j\}}\left(A_k-A_{k-n}\right)\hat
Z_{nk}\,.
$$
 
Let now  $(G_{ni}:\, n\ge1,i\ge0)$ be a collection of independent random
variables, also  independent of $(\hat Z_{ni}:n\ge1,i\ge0)$,  where
\[
G_{ni}\sim N\left((A_i-A_{i-n})\zeta_n\sigma_Z^2\,,\sigma_Z^2\right)\,, \text{ for all }n\ge1,\,i\ge0\,.
\]
It follows from Lemma \ref{lpm.l0} and \eqref{e:A.incr} that  
\eqref{lpm.newl0.eq2} can be reformulated as 
\begin{equation}
\label{lpm.newl0.eq3} E\left(\hat 
  Z_{nj}^i\right)-E\left(G_{nj}^i\right)=O\left(|A_j-A_{j-n}|^{\kappa-i+1}
  n^{-2(1-\alpha)(\kappa-i+1)}\right) 
\end{equation}
uniformly in $j\ge0$ and $1\le i\le\kappa$. For a fixed $t\in\bbr$ we
use telescoping to write 
\begin{align}
\label{lpm.newl0.eq4}&\left| E\exp\left\{i\left(t\sigma_n^{-1}\sum_{j=0}^\infty
(A_j-A_{j-n})G_{nj}\right)\right\}
                       - E\exp\left\{i\left(t\sigma_n^{-1}\hat
                       S_n\right)\right\}\right| \\
\nonumber&\le\sum_{j=0}^\infty\left|E\exp\left\{i\left(t\sigma_n^{-1}\left(\sum_{k=0}^{j-1}(A_j-A_{j-n})\hat Z_{nj}+\sum_{k=j}^\infty(A_j-A_{j-n})G_{nj}\right)\right)\right\}\right.\\
\nonumber &\left.\,\,\,\,-E\exp\left\{i\left(t\sigma_n^{-1}\left(\sum_{k=0}^{j}(A_j-A_{j-n})\hat Z_{nj}+\sum_{k=j+1}^\infty(A_j-A_{j-n})G_{nj}\right)\right)\right\}\right|\,.
\end{align}

Fix $j\ge0$ and denote 
\begin{align*}
U&=t\sigma_n^{-1}\left(\sum_{k=0}^{j-1}(A_j-A_{j-n})\hat 
  Z_{nj}+\sum_{k=j+1}^\infty(A_j-A_{j-n})G_{nj}\right), \\
V&=t\sigma_n^{-1}(A_j-A_{j-n})G_{nj}\,,
\end{align*}
so that by expanding in the Taylor series around $U$, 
\begin{align*}
&E\exp\left\{i\left(t\sigma_n^{-1}\left(\sum_{k=0}^{j-1}(A_j-A_{j-n})\hat
                 Z_{nj}+\sum_{k=j}^\infty(A_j-A_{j-n})G_{nj}\right)\right)\right\}
\\
                 &=Ee^{i(U+V)}
=\sum_{m=0}^\kappa\frac{i^m}{m!}E\left(V^m\right) Ee^{iU}
                   + R_1\,, 
\end{align*}
with $|R_1|\le E(|V|^{\kappa+1})/(\kappa+1)!$. Similarly, 
\begin{align*}
&E\exp\left\{i\left(t\sigma_n^{-1}\left(\sum_{k=0}^{j}(A_j-A_{j-n})\hat Z_{nj}+\sum_{k=j+1}^\infty(A_j-A_{j-n})G_{nj}\right)\right)\right\}\\
&=\sum_{m=0}^\kappa\frac{i^m}{m!}E\left(W^m\right)Ee^{iU}+R_2\,,
\end{align*}
with $|R_2|\le E(|W|^{\kappa+1})/(\kappa+1)!$, where 
\[
W=(A_j-A_{j-n})\hat Z_{nj}\,. 
\]
We conclude that 
\begin{align}
\nonumber&\left|E\exp\left\{i\left(t\sigma_n^{-1}\left(\sum_{k=0}^{j-1}(A_j-A_{j-n})
\hat Z_{nj}+\sum_{k=j}^\infty(A_j-A_{j-n})G_{nj}\right)\right)\right\}\right.\\
\nonumber&\left.\,\,\,\,-E\exp\left\{i\left(t\sigma_n^{-1}\left(\sum_{k=0}^{j}(A_j-A_{j-n})
\hat Z_{nj}+\sum_{k=j+1}^\infty(A_j-A_{j-n})G_{nj}\right)\right)\right\}\right| \\
\nonumber&\le\sum_{i=1}^\kappa\frac{|t|^i}{i!}\left|(A_j-A_{j-n})^i\sigma_n^{-i}E\left(
\hat Z_{nj}^i-G_{nj}^i\right)\right| \\
\label{lpm.newl0.eq5}&\,\,\,\,\,\,\,  +\frac{|t|^{\kappa+1}}{(\kappa+1)!}\left|A_j-A_{j-n}\right|^{\kappa+1}\sigma_n^{-(\kappa+1)}E\left(|G_{nj}|^{\kappa+1}+|\hat Z_{nj}|^{\kappa+1}\right)\,.
\end{align}
Note that by \eqref{lpm.l1.eqcube} and Lemma \ref{lpm.l0}, 
\begin{align*}
&\sigma_n^{-(\kappa+1)}\sum_{j=0}^\infty\left|A_j-A_{j-n}\right|^{\kappa+1}E\left(|G_{nj}|^{\kappa+1}+|\tilde Z_{nj}|^{\kappa+1}\right)\\
&=O\left(n^{-(\kappa-1)/2}\right)
=o\left(n^{1/2-\kappa(1-\alpha)}\right)\,.
\end{align*}
For $1\le i\le\kappa$ we  use, in addition. \eqref{lpm.newl0.eq3} to write
\begin{align*}
&\sigma_n^{-i}\sum_{j=0}^\infty\left|(A_j-A_{j-n})^iE\left(\tilde Z_{nj}^i-G_{nj}^i\right)\right|\\
&=O\left(n^{-\kappa(1-\alpha)+\alpha-i(\alpha-1/2)}\right) =O\left(n^{1/2-\kappa(1-\alpha)}\right)\,.
\end{align*}
Putting these bounds  into \eqref{lpm.newl0.eq5} we obtain 
\begin{equation*}
E\left(e^{\iota t\sigma_n^{-1}\tilde
    S_n}\right)=\phi_G\left(\sigma_n^{-1}n\vep;1;t\right)+O\left(n^{1/2-\kappa(1-\alpha)}\left(1+|t|^{\kappa+1}\right)\right) 
\end{equation*}
uniformly for $t\in\bbr$, which is equivalent to
\eqref{lpm.newl0.claim1}.  The argument for \eqref{lpm.newl0.claim2}
and \eqref{lpm.newl0.claim3} is the same. 
\end{proof}

By the assumption \eqref{eq.chf}, for large $n$, 
 the random variables $\sigma_n^{-1}(\hat S_n-n\vep)$, $\sigma_n^{-1}(\hat 
S_n(i)-n\vep)$ and $\sigma_n^{-1}(\hat S_n(i,j)-n\vep)$ have densities
which we denote by $f_n$, $f_{ni}$ 
and $f_{nij}$, correspondingly. 

\begin{lemma}\label{lpm.newl1}
  Suppose that \eqref{e:moment.eq}  and\eqref{eq.chf} hold. Then for large $n$, the densities
 $f_{ni}$ and $f_{nij}$ are twice differentiable. Furthermore, as
 $n\to\infty$, 
\begin{align}
\label{lpm.newl1.eq0}f_{ni}(0)&=(2\pi)^{-1/2}+o\left(n^{1-2\alpha}\right)\,,\\
  \label{lpm.newl1.eq-1}f_{ni}'(0)&=o\left(n^{1/2-\alpha}\right)
\end{align}
uniformly in $i$, and for some $n_0\in\bbn$,
\begin{equation}\label{lpm.newl1.eq-2}
\sup\left\{\left|f_{ni}''(x)\right|:n\ge n_0,\,i\ge0,\,x\in\bbr\right\} < \infty\,.
\end{equation}
All three statements also hold if $f_{ni}$ is replaced by
$f_{nij}$, $i<j$.
Finally,  as $n\to\infty$,
\begin{equation}
\label{lpm.newl1.eq4} \sup_{x\in\bbr}\left|f_n(x)-(2\pi)^{-1/2}e^{-x^2/2}\right|=o\left(n^{1-2\alpha}\right)\,.
\end{equation}
\end{lemma}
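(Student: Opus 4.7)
The plan is to obtain all the assertions via Fourier inversion, using Lemma \ref{lpm.newl0} to control the characteristic functions $\phi_n,\phi_{ni},\phi_{nij}$ near the origin and the regularity assumption \eqref{eq.chf} to dominate the tails.

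First I would verify that the densities exist and are twice differentiable. Note that the characteristic function of a tilted noise variable $\hat Z_{nj}$ is $\hat\varphi_{nj}(u)=\bigl(Ee^{(A_j-A_{j-n})\zeta_nZ}\bigr)^{-1}\int e^{(iu+(A_j-A_{j-n})\zeta_n)z}F_Z(dz)$, and since $(A_j-A_{j-n})\zeta_n=O(n^{-(1-\alpha)})\to 0$ uniformly in $j$, assumption \eqref{eq.chf} gives $\int u^2|\hat\varphi_{nj}(u)|\,du\le C<\infty$ uniformly in $n,j$ for large $n$. Writing
$$\phi_{ni}(t)=e^{-it\sigma_n^{-1}n\vep}\prod_{k\neq i}\hat\varphi_{nk}\bigl(t\sigma_n^{-1}(A_k-A_{k-n})\bigr),$$
I can factor off a single factor (say, the one for $k=0$ if $i\ne 0$, with $A_0\ne 0$), and the boundedness plus integrability of that single factor combined with $|\hat\varphi_{nk}|\le 1$ for all other $k$ yields $\int(1+t^2)|\phi_{ni}(t)|\,dt<\infty$ for large $n$, so $f_{ni}$ exists and is twice continuously differentiable by differentiation under the integral sign. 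The same argument works for $f_{nij}$ (use two different $k$'s not equal to $i,j$) and $f_n$.

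For \eqref{lpm.newl1.eq0}--\eqref{lpm.newl1.eq-2}, I write
$$f_{ni}^{(m)}(x)=\frac{1}{2\pi}\int_\bbr (-it)^m e^{-itx}\phi_{ni}(t)\,dt,\quad m=0,1,2,$$
and split the integral at a threshold $T_n$. On $|t|\le T_n$, Lemma \ref{lpm.newl0}(\ref{lpm.newl0.claim2}) replaces $\phi_{ni}(t)$ by $\phi_G(\sigma_n^{-1}n\vep(\lambda_{ni}-1);\lambda_{ni};t)$, with error whose $|t|^m$-moment integrates to $O(n^{1/2-\kappa(1-\alpha)}T_n^{\kappa+m+2})$. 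Since $\kappa$ is chosen in \eqref{eq.kappa} to satisfy $\kappa(1-\alpha)>(2\alpha-1/2)$, i.e.\ $\frac12-\kappa(1-\alpha)<1-2\alpha$, picking $T_n$ a small power of $n$ makes this error $o(n^{1-2\alpha})$ for $m=0$ and $o(n^{1/2-\alpha})$ for $m=1$. The remaining Gaussian integral $\frac{1}{2\pi}\int_{|t|\le T_n}\phi_G(\cdot)\,dt$ equals the Gaussian density evaluated at $0$ plus a super-polynomially small tail correction; since $\lambda_{ni}=1+O(n^{2-2\alpha}/\sigma_n^2)=1+o(n^{1-2\alpha})$ and its mean $\sigma_n^{-1}n\vep(\lambda_{ni}-1)$ is also $o(n^{1/2-\alpha})$, this recovers the stated asymptotics. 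On $|t|>T_n$, the tail $\int_{|t|>T_n}|t|^m|\phi_{ni}(t)|\,dt$ is negligible by the factorization above together with \eqref{eq.chf}. For the uniform bound \eqref{lpm.newl1.eq-2} the same tail decomposition for $m=2$ gives $\sup_x|f_{ni}''(x)|\le(2\pi)^{-1}\int t^2|\phi_{ni}(t)|\,dt<\infty$ uniformly in $n,i$. The statements for $f_{nij}$ follow identically using \eqref{lpm.newl0.claim3}.

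For \eqref{lpm.newl1.eq4}, the Fourier inversion gives
$$\sup_x\bigl|f_n(x)-(2\pi)^{-1/2}e^{-x^2/2}\bigr|\le\frac{1}{2\pi}\int_\bbr|\phi_n(t)-e^{-t^2/2}|\,dt,$$
and the same split at $T_n$ with \eqref{lpm.newl0.claim1} on $|t|\le T_n$ and the tail bound on $|t|>T_n$ yields $o(n^{1-2\alpha})$. The main obstacle will be the tail estimate: one must turn the assumption \eqref{eq.chf}, which bounds only a single-noise-variable integral with a weight $t^2$, into a uniform super-polynomial bound on $\int_{|t|>T_n}|t|^m|\phi_{ni}(t)|\,dt$ for the full product $\phi_{ni}$ over arbitrary subsets of indices. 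The key trick is to isolate one factor $\hat\varphi_{nk}(t\sigma_n^{-1}(A_k-A_{k-n}))$ with $A_k-A_{k-n}$ bounded below (e.g.\ $k=0$ giving $|A_0-A_{-n}|=|a_0|>0$), change variables so that \eqref{eq.chf} applies directly, and use $|\hat\varphi_{nk'}|\le 1$ on the remaining factors; one must check that this works uniformly as the excluded index $i$ (or pair $i,j$) ranges over all of $\{0,1,\ldots\}$, which can always be arranged by selecting a finite set of ``reserve'' indices with nonzero coefficients.
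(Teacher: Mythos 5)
Your overall architecture (Fourier inversion, split of the $t$-integral into a near-origin part controlled by Lemma \ref{lpm.newl0} and a tail controlled by \eqref{eq.chf}) is the same as the paper's, and your treatment of the near-origin part is correct: the choice of $\kappa$ in \eqref{eq.kappa} gives $1/2-\kappa(1-\alpha)<1-2\alpha$, so a slowly growing threshold makes that contribution $o(n^{1-2\alpha})$. The existence and twice-differentiability of the densities via a single integrable factor is also fine. But the resolution you propose for what you correctly identify as the main obstacle — the tail estimate — does not work. If you isolate one factor $\hat\varphi_{nk}\bigl(t\sigma_n^{-1}(A_k-A_{k-n})\bigr)$ with $|A_k-A_{k-n}|$ bounded below by a constant and bound all other factors by $1$, the change of variables $u=t\sigma_n^{-1}(A_k-A_{k-n})$ produces a Jacobian of order $\sigma_n\asymp n^{3/2-\alpha}$, so $\int_{|t|>T_n}|\phi_{ni}(t)|\,dt=O\bigl(n^{3/2-\alpha}\bigr)$, which diverges rather than being $o(n^{1-2\alpha})=o(1)$; for the weight $t^2$ the same computation gives $O\bigl(n^{3(3/2-\alpha)}\bigr)$, so the uniform bound \eqref{lpm.newl1.eq-2} also fails. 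The underlying problem is that on the range $T_n<|t|\ll n^{1/2}$ the argument of any single rescaled characteristic function is still close to $0$, hence the factor is close to $1$, and no one factor can produce decay there.

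The paper's tail argument instead exploits the \emph{product} over the block of $\Theta(n)$ indices $i\in[n/2,n-1]$, for which $A_i-A_{i-n}=A_i\asymp n^{1-\alpha}\asymp\sigma_n n^{-1/2}$, so that each factor is evaluated at an argument of order $tn^{-1/2}$. A local strict-concavity and monotonicity argument for $|H(x,t)|$ near $t=0$ (the function $H$ being the tilted characteristic function), combined with \eqref{f.chf} and \eqref{f.rl}, shows that each factor is at most $\bigl(1-s_1s_0^2n^{-1}(\log n)^2\bigr)^{1/2}$ once $t\ge\log n$, and the product of $\sim n/2$ such factors is $O\bigl(e^{-s_2(\log n)^2}\bigr)$, which beats every power of $n$. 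One additional reserved factor (the paper uses $i=n$) then makes the tail integral absolutely convergent, with total bound $O\bigl(n^{1/2}e^{-s_2(\log n)^2}\bigr)=o(n^{1-2\alpha})$. This multiplicative accumulation of Gaussian-type decay over a macroscopic block of comparable coefficients is the essential idea missing from your proposal; without it neither \eqref{lpm.newl1.eq4} nor the uniformity in \eqref{lpm.newl1.eq0}--\eqref{lpm.newl1.eq-2} can be obtained.
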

\begin{proof}
We start with the proof of \eqref{lpm.newl1.eq4} which would follow from the inversion formula for densities once it is shown that
\[
\int_{-\infty}^\infty \left|\phi_n(t)-\phi_G(0;1;t)\right|dt=o\left(n^{1-2\alpha}\right)\,.
\]
By Lemma \ref{lpm.newl0} and \eqref{eq.kappa}, 
\begin{align*}
\int_{-\log n}^{\log n}\left|\phi_n(t)-\phi_G(0;1;t)\right|dt&=O\left(n^{1/2-\kappa(1-\alpha)}(\log n)^{\kappa+2}\right)
=o\left(n^{1-2\alpha}\right)\,. 
\end{align*}
Furthermore, 
\[
\int_{[-\log n,\log n]^c}\phi_G(0;1;t)\, dt =O\left(e^{-(\log n)^2/2}\right)=o\left(n^{1-2\alpha}\right)\,,
\]
Thus, \eqref{lpm.newl1.eq4} will follow once we show that 
\begin{equation}
\label{lpm.newl1.eq5} \int_{[-\log n,\log n]^c}
\left|\phi_n(t)\right|dt =o\left(n^{1-2\alpha}\right)\,.
\end{equation}

With $(\hat Z_{ni}:n\ge1,i\ge0)$  as above, we set 
\[
U_{ni}=\sigma_n^{-1}(A_i-A_{i-n})\left[\hat Z_{ni}-E(\hat
  Z_{ni})\right], \, n\ge1,\, i\ge0\,, 
\]
so that 
\begin{equation}
\label{lpm.newl1.eq9}\left|\phi_n(t)\right|=\prod_{i=0}^\infty\left|E\left(e^{\iota tU_{ni}}\right)\right|\,,n\ge1,t\in\bbr\,.
\end{equation}
Set
\[
H(x,t)=\left(\int_{-\infty}^\infty e^{xz}f_Z(z)\, dz\right)^{-1}
\int_{-\infty}^\infty e^{(x+\iota t)z}f_Z(z)\, dz ,\, (x,t)\in\bbr^2\,,
\]
which is a characteristic function for any fixed $x$.  A consequence of that is
$\partial |H(x,t)|/\partial t|_{t=0}\leq 0$ for any
$x\in\bbr$. Furthermore, 
\[
\frac{\partial^2}{\partial t^2} | H(0,t)|\Bigr|_{t=0}=-\sigma_Z^2<0
\]
and by continuity of the second partial derivative we conclude that
there is $\delta_0>0$ such that
\[
\frac{\partial^2}{\partial t^2} | H(x,t)|\Bigr|<0 \ \text{whenever} \
0\leq |t|,|x|\leq \delta_0.
\]
That means we also have 
\begin{equation}
\label{lpm.newl1.eq7} \frac{\partial}{\partial t} | H(x,t)|\Bigr|\leq 0 \ \text{whenever} \
0\leq |t|,|x|\leq \delta_0.
\end{equation}
We may and will choose $\delta_0\in (0,\theta_0]$, with $\theta_0$ as
in \eqref{eq.chf}. By \eqref{eq.chf} we can appeal to \eqref{f.rl} to conclude that 
\[
\lim_{t\to\infty}\sup_{|x|\le\delta_0}|H(x,t)|=0\,.
\]
Thus, there is $M>0$ large enough so that
$$
\sup_{t>M,|x|\le\delta_0}|H(x,t)|<1.
$$
Since by continuity of $H$ and compactness we have
$$
\sup_{\delta_0\le t\le M,|x|\le\delta_0}|H(x,t)|<1, 
$$
it follows that 
\[
\eta=\sup_{t\ge\delta_0,|x|\le\delta_0}|H(x,t)|<1\,.
\]
The continuity argument also shows that there is
$\delta_1\in(0,\delta_0]$ such that 
\[
\min_{|x|\le\delta_0}|H(x,\delta_1)|\ge\eta\,.
\]
Therefore, for $|x|\le\delta_0$ and $0\le t\le\delta_1$,  \eqref{lpm.newl1.eq7} implies that
\begin{align*}
|H(x,t)|& \ge|H(x,\delta_1)|\ge\eta \ge\sup_{s\ge\delta_0}|H(x,s)|\,.
\end{align*}
Since by \eqref{lpm.newl1.eq7} we also have 
\[
|H(x,t)|=\sup_{s\in[t,\delta_0]}|H(x,s)|\,,
\]
we conclude that 
\begin{equation}
\label{lpm.newl1.eq8} |H(x,t)|=\sup_{s\ge t}|H(x,s)|, \,
|x|\le\delta_0, \, 0\le t\le\delta_1\,.
\end{equation}

By \eqref{lpm.newl1.eq9} 
\begin{align}\notag 
 |\phi_n(t)|&\le\left|E(e^{\iota
                       tU_{nn}})\right|\prod_{i=[n/2]}^{n-1}\left|E(e^{\iota
                       tU_{ni}})\right| \\
 \label{lpm.newl1.eq10} &=\left|E(e^{\iota tU_{nn}})\right|\prod_{i=[n/2]}^{n-1}\left|H\left(\zeta_nA_i,\sigma_n^{-1}A_it\right)\right|\,.
\end{align}

It follows from Lemma \ref{lpm.l0} that 
there exists $s_0>0$ such that for all $n$ large enough, 
\[
A_i\ge s_0\sigma_n n^{-1/2}, \, [n/2]\le i\le n-1\,.
\]
Thus, for $n$ large enough and $t\ge\log n$,  \eqref{lpm.newl1.eq8} implies that 
\begin{align*}
\prod_{i=[n/2]}^{n-1}\left|H\left(\zeta_nA_i,\sigma_n^{-1}A_it\right)\right| \le \prod_{i=[n/2]}^{n-1}\left|H\left(\zeta_nA_i,s_0n^{-1/2}\log n\right)\right| \,.
\end{align*}
Since any partial derivative of $H$ is bounded on a compact set,
we can use the bound \eqref{f.chf} to conclude that there exists
$s_1>0$ such that 
\[
\sup_{|x|\le\delta_0}|H(x,t)|\le (1-s_1t^2)^{1/2} ,\  0\le t\le 1\,.
\]
Thus, there is $s_2>0$ such that for all large $n$ and all $t\geq
\log n$ we have 
\[
\prod_{i=[n/2]}^{n-1}\left|H\left(\zeta_nA_i,\sigma_n^{-1}A_it\right)\right|\le\left(1-s_0^2s_1n^{-1}(\log n)^2\right)^{n/4}=O\left(e^{-s_2(\log n)^2}\right)\,.
\]
Using this bound in \eqref{lpm.newl1.eq10}, and appealing to
\eqref{eq.chf} we obtain 
\begin{align*}
\int_{\log n}^\infty \left|\phi_n(t)\right|dt  &=O\left(e^{-s_2(\log
     n)^2} \right)\int_{\log n}^\infty \left|E\left(e^{i tU_{nn}}\right)\right|dt\\
&=O\left(n^{1/2}e^{-s_2(\log n)^2}\right)
=o\left(n^{1-2\alpha}\right)\,.
\end{align*}
Since we can switch from $t$ to $-t$, \eqref{lpm.newl1.eq5} follows,
which establishes \eqref{lpm.newl1.eq4}.  

A similar calculation with the aid of \eqref{lpm.newl0.claim2} shows that
\[
f_{ni}(0)=\left(2\pi\lambda_{ni}\right)^{-1/2}\exp\left(-\sigma_n^{-2}n^2\vep^2(\lambda_{ni}-1)^2/2\lambda_{ni}\right)+o\left(n^{1-2\alpha}\right)\,,
\]
uniformly in $i\ge0$.  Since $\lambda_{ni}-1=O(1/n)$
uniformly in $i\ge0$, it follows that
\begin{align*}
\lambda_{ni}^{-1/2}\exp\left(-\sigma_n^{-2}n^2\vep^2(\lambda_{ni}-1)^2/2\lambda_{ni}\right)&=1+O\left(n^{-1}+\sigma_n^{-2}\right)\\
&=1+o\left(n^{1-2\alpha}\right)\,,
\end{align*}
uniformly for $i\ge0$,  which proves \eqref{lpm.newl1.eq0}.   
For \eqref{lpm.newl1.eq-2} we write 
\[
f_{nk}''(x)=-(2\pi)^{-1/2}\int_{-\infty}^\infty e^{-i
  tx}t^2\phi_{nk}(t)\, dt 
\]
and repeat the arguments used above in the proof of
\eqref{lpm.newl1.eq4}, applying \eqref{lpm.newl0.claim2} and
the full force of the assumption \eqref{eq.chf}.  

Finally, for \eqref{lpm.newl1.eq-1} we use the identity
$$
f_{nk}'(0)=-i(2\pi)^{-1/2}\int_{-\infty}^\infty  t\phi_{nk}(t)\, dt. 
$$
Since 
\begin{align*}
\left|\int_{-\infty}^\infty
  t\,\phi_G\left(\sigma_n^{-1}n\vep(\lambda_{nk}-1);\lambda_{nk};t\right)dt\right|
  =O\left(\sigma_n^{-1}\right)
=o\left(n^{1/2-\alpha}\right)\,,
\end{align*}
uniformly in $k\ge0$,  \eqref{lpm.newl1.eq-1} follows.

The arguments with $f_{nij}$ replacing $f_{ni}$ are similar.   This
completes the proof. 
\end{proof}

The next lemma tackles certain expectations 
conditionally on $E_0$; its statement should be compared to
\eqref{e:P.E0}. 
\begin{lemma}\label{lpm.ct}
  Suppose that \eqref{e:moment.eq}  and\eqref{eq.chf} hold. Then
\begin{align} \label{lpm.newl2.eq5}
E\left(Z_{n-i-1}\one(E_0)\right)
 =K_n\left[ \int_{-\infty}^\infty z\, G_{\zeta_n(A_i-A_{i-n})}(dz) + o\left(\zeta_n^{-1}\sigma_n^{-2}|A_i-A_{i-n}|\right)\right]
\end{align}
and 
\begin{align} \label{e:cros..pr}
&E\left(Z_{n-i-1}Z_{n-j-1}\one(E_0)\right)\\
\notag &=K_n\Biggl(\int_{-\infty}^\infty z_1\, G_{\zeta_n(A_i-A_{i-n})}(dz_1)
\int_{-\infty}^\infty z_2\, G_{\zeta_n(A_i-A_{i-n})}(dz_2)\\
\notag &+o\left(\sigma_n^{-2}|(A_i-A_{i-n})(A_j-A_{j-n})|\right)\Biggr), \
   n\to\infty, 
\end{align}
uniformly for $i,j\ge0$ with $i\neq j$, where 
\begin{equation}
\label{lpm.ct.eq2}K_n=(2\pi)^{-1/2}\zeta_n^{-1}\sigma_n^{-1}e^{-n\vep\zeta_n}E\left(e^{\zeta_nS_n}\right),
\, n\ge1\,.
\end{equation}
\end{lemma}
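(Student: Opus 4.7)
The plan is to establish both \eqref{lpm.newl2.eq5} and \eqref{e:cros..pr} by the same exponential tilting argument; I focus on \eqref{lpm.newl2.eq5}, since \eqref{e:cros..pr} follows from the analogous bivariate decomposition $S_n=c_iY_i+c_jY_j+S_n'(i,j)$ with $Y_k=Z_{n-k-1}$, $c_k=A_k-A_{k-n}$, and $\hat S_n(i,j)$, $f_{nij}$ playing the roles of $\hat S_n(i)$, $f_{ni}$. Under the $\zeta_n$-tilt, $Y_i$ and $Y_j$ become independent with laws $G_{\zeta_n c_i}$ and $G_{\zeta_n c_j}$, producing the factorized leading term in \eqref{e:cros..pr}.

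Write $c=A_i-A_{i-n}$ and $Y=Z_{n-i-1}$, so $S_n=cY+S_n'(i)$ with $Y$ independent of $S_n'(i)$. Tilting by $\zeta_n$ and using $M_Z(\zeta_n c)\cdot E(e^{\zeta_n S_n'(i)})=E(e^{\zeta_n S_n})=:M_n$ to factor the density ratio gives
\begin{equation*}
E(Y\one(E_0))=M_n\int\!\!\int y\,e^{-\zeta_n(cy+s)}\one(cy+s\ge n\vep)\,G_{\zeta_n c}(dy)\,P(\hat S_n(i)\in ds).
\end{equation*}
Substituting the density $\sigma_n^{-1}f_{ni}(\sigma_n^{-1}(s-n\vep))$ for $\hat S_n(i)$, performing the change of variable $w=\zeta_n(cy+s-n\vep)$ with $\eta_n=\zeta_n\sigma_n$, and recognizing the prefactor via \eqref{lpm.ct.eq2} yields
\begin{equation*}
E(Y\one(E_0))=(2\pi)^{1/2}K_n\int y\int_0^\infty e^{-w}f_{ni}\bigl(w/\eta_n-\sigma_n^{-1}cy\bigr)\,dw\,G_{\zeta_n c}(dy).
\end{equation*}
Thus \eqref{lpm.newl2.eq5} reduces to showing that the inner double integral equals $(2\pi)^{-1/2}\mu+o(\zeta_n^{-1}\sigma_n^{-2}|c|)$, where $\mu=\int z\,G_{\zeta_n c}(dz)$.

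The argument of $f_{ni}$ is small on the dominant set because $\eta_n\asymp n^{\alpha-1/2}\to\infty$, $\sigma_n^{-1}|c|=O(n^{-1/2})$, and $G_{\zeta_n c}$ has uniformly exponential tails by \eqref{eq.exp}. Taylor-expanding $f_{ni}$ to first order at $0$, the constant term is $f_{ni}(0)\mu=[(2\pi)^{-1/2}+o(n^{1-2\alpha})]\mu=(2\pi)^{-1/2}\mu+o(\zeta_n^{-1}\sigma_n^{-2}|c|)$ by \eqref{lpm.newl1.eq0} combined with $|\mu|=O(\zeta_n|c|)$ from \eqref{e:tilt.facts}. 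The linear term equals $f_{ni}'(0)\bigl[\mu/\eta_n-\sigma_n^{-1}c\,E_{G_{\zeta_n c}}(Y^2)\bigr]$, and by \eqref{e:tilt.facts} the bracket cancels to leading order (both pieces equal $c\sigma_Z^2/\sigma_n$), leaving a residual of size $O(\zeta_n c^2/\sigma_n)$; combined with $f_{ni}'(0)=o(n^{1/2-\alpha})$ from \eqref{lpm.newl1.eq-1}, this linear contribution is $o(n^{-1}|c|)=o(\zeta_n^{-1}\sigma_n^{-2}|c|)$.

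The main obstacle is the quadratic Taylor remainder $R_{ni}(x)=f_{ni}(x)-f_{ni}(0)-f_{ni}'(0)x$, because the crude bound $|R_{ni}(x)|\le\tfrac12\sup|f_{ni}''|\,x^2$ from \eqref{lpm.newl1.eq-2} yields a pure-$w$ contribution of order $E_{G_{\zeta_n c}}(|Y|)/\eta_n^2=O(n^{1-2\alpha})$, which is not $o(\zeta_n^{-1}\sigma_n^{-2}|c|)$ uniformly in small $|c|$. To sharpen this I would split $R_{ni}(x)=\tfrac12 f_{ni}''(0)x^2+\tfrac12[f_{ni}''(\xi_x)-f_{ni}''(0)]x^2$ and treat the pieces separately. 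For the first piece, $\tfrac12 f_{ni}''(0)\int y\int_0^\infty e^{-w}(w/\eta_n-\sigma_n^{-1}cy)^2\,dw\,G_{\zeta_n c}(dy)$, expanding the square and integrating termwise against $y\,G_{\zeta_n c}(dy)$ produces three contributions $f_{ni}''(0)\mu/\eta_n^2=O(|c|/(n\vep))$, $-\sigma_n^{-1}c\,f_{ni}''(0)E_{G_{\zeta_n c}}(Y^2)/\eta_n=O(|c|/n)$, and $f_{ni}''(0)\sigma_n^{-2}c^2\,E_{G_{\zeta_n c}}(Y^3)/2=O(c^2n^{2\alpha-3})=o(|c|/n)$ (using $|c|=O(n^{1-\alpha})$ from \eqref{e:A.incr} and $E_{G_{\zeta_n c}}(|Y|^k)=O(1)$ for every $k$ from \eqref{e:tilt.facts} and \eqref{eq.exp}), all within the target precision. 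The second piece is controlled via the uniform continuity of $f_{ni}''$ at $0$—obtained from the Fourier representation $f_{ni}''(x)=-(2\pi)^{-1/2}\int e^{-itx}t^2\phi_{ni}(t)\,dt$, the characteristic-function estimate \eqref{lpm.newl0.claim2}, and the tail decay from \eqref{eq.chf} as in the proof of Lemma \ref{lpm.newl1}—combined with an exponential-tail truncation of $G_{\zeta_n c}$ at $|y|\le\log n$ via \eqref{eq.exp}, on which $|\xi_x|\to 0$ uniformly.
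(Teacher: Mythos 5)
Your setup is correct and matches the paper's: both tilt away the conditioning, pass to the density $f_{ni}$ of $\sigma_n^{-1}(\hat S_n(i)-n\vep)$, and Taylor-expand using Lemma \ref{lpm.newl1}. But your error analysis has a genuine gap, and it is exactly the ``pure-$w$'' obstruction you yourself diagnose. Write $c=A_i-A_{i-n}$, $\mu=\int z\,G_{\zeta_n c}(dz)$ and recall the target error is $o(\zeta_n^{-1}\sigma_n^{-2}|c|)=o(|c|/(n\vep))$. First, of the three contributions from $\tfrac12 f_{ni}''(0)x^2$, the first two are \emph{not} within the target precision: since $\mu\sim\sigma_Z^2\zeta_n c$ and $f_{ni}''(0)\to-(2\pi)^{-1/2}\neq0$, the term $f_{ni}''(0)\mu/\eta_n^2$ is asymptotically a nonzero constant times $c\,\zeta_n^{-1}\sigma_n^{-2}$, i.e.\ exactly of the critical order, and likewise for $-\sigma_n^{-1}c\,f_{ni}''(0)m_2/\eta_n$ with $m_2=\int z^2G_{\zeta_n c}(dz)\to\sigma_Z^2$. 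Writing ``$O(|c|/(n\vep))$ \dots all within the target precision'' conflates $O$ with $o$. These two terms do happen to cancel to leading order (their sum is $f_{ni}''(0)\eta_n^{-1}[\mu/\eta_n-cm_2/\sigma_n]=O(c^2\sigma_n^{-2})=o(|c|\zeta_n^{-1}\sigma_n^{-2})$, by the same mechanism as your $f'$ cancellation), but your proposal neither states nor uses this cancellation, and without it the step fails. Second, your treatment of $\tfrac12[f_{ni}''(\xi_x)-f_{ni}''(0)]x^2$ by an absolute bound $\omega_n x^2$ (equicontinuity plus truncation) reintroduces the original problem: bounding in absolute value forces you to integrate against $|y|\,G_{\zeta_n c}(dy)=O(1)$ rather than against the signed $y\,G_{\zeta_n c}(dy)=\mu=O(\zeta_n|c|)$, so the $w^2/\eta_n^2$ part of $x^2$ contributes $O(\omega_n\,\eta_n^{-2})=O(\omega_n n^{1-2\alpha})$ with no factor of $|c|$. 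Since $|A_i-A_{i-n}|\to0$ as $i\to\infty$ for fixed $n$, this cannot be $o(\zeta_n^{-1}\sigma_n^{-2}|c|)$ uniformly in $i$, and the uniformity is essential for Lemma \ref{lpm.newl2}. Fixing it requires again splitting off the $y$-independent part $h(0)\mu$ and controlling the increment $h(y)-h(0)$, i.e.\ more cancellation bookkeeping.

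The paper sidesteps all of this with one structural device you do not use: because $EZ=0$, one may subtract the $z$-independent part of the inner integral and reduce to $\int_0^{\zeta_n c z}e^{x}f_{ni}(-x/(\sigma_n\zeta_n))\,dx$, an integral over a range of length $\zeta_n|c||z|$. Every Taylor term, including crude absolute-value bounds on the remainder, then automatically carries the factor $|c|$, and no cancellations need to be tracked. This device is even more important for \eqref{e:cros..pr}, where the paper subtracts \emph{both} marginal integrals to manufacture the product $(e^{\zeta_n(A_i-A_{i-n})z_1}-1)(e^{\zeta_n(A_j-A_{j-n})z_2}-1)$ and hence the factor $|(A_i-A_{i-n})(A_j-A_{j-n})|$ in the error; your one-sentence reduction of the bivariate case to ``the analogous decomposition'' does not engage with this, and the bivariate case is the harder of the two.
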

\begin{proof}
We only prove \eqref{e:cros..pr}; the proof of \eqref{lpm.newl2.eq5}
is similar and easier. Write 
\begin{align*}
&E\left(Z_{n-i-1}Z_{n-j-1}\one(E_0)\right)\\
&=\int_{-\infty}^\infty z_1\, F_Z(dz_1)\int_{-\infty}^\infty z_2\, F_Z(dz_2) \\
&\hspace{1in} P\left(S_n'(i,j)\ge n\vep-(A_i-A_{i-n})z_1-(A_j-A_{j-n})z_2\right) \\
&=\sigma_n^{-1}E\left(e^{\zeta_nS_n'(i,j)}\right)
    \int_{-\infty}^\infty z_1\, F_Z(dz_1)
\int_{-\infty}^\infty z_2\, F_Z(dz_2)\\
&\hspace{1in} \int_{n\vep-(A_i-A_{i-n})z_1-(A_j-A_{j-n})z_2}^\infty f_{nij}
                                          \bigl(s-n\vep)/\sigma_n\bigr)
                                          e^{-\zeta_ns} \, ds.
\end{align*}
We adopt the convention $\int_a^b\equiv-\int_b^a$, and denote
\begin{align*}
c_{nij}&=\zeta_n^{-1}\sigma_n^{-1}e^{-n\vep\zeta_n}E\left(e^{\zeta_nS_n'(i,j)}\right)\\
&=K_n(2\pi)^{1/2}\left(\int_{-\infty}^\infty e^{\zeta_n(A_i-A_{i-n})z}F_Z(dz)
\int_{-\infty}^\infty e^{\zeta_n(A_j-A_{j-n})z}F_Z(dz)\right)^{-1}.
\end{align*}
Changing the variable and using the fact that $EZ=0$, we obtain 
\begin{align}
\nonumber&E\left(Z_{n-i-1}Z_{n-j-1}\one(E_0)\right)\\
\nonumber&=c_{nij} \int_{-\infty}^\infty z_1\, F_Z(dz_1)
\int_{-\infty}^\infty z_2\, F_Z(dz_2)\\
\nonumber&\hspace{.5in}
           \int_{0}^{\zeta_n(A_i-A_{i-n})z_1+\zeta_n(A_j-A_{j-n})z_2}e^x
           f_{nij}\bigl(-x/(\sigma_n\zeta_n)\bigr)\, dx\\
 \label{lpm.ct.eq4}&= c_{nij} \int_{-\infty}^\infty z_1\, F_Z(dz_1)
\int_{-\infty}^\infty z_2\, F_Z(dz_2)\\
\nonumber &\hspace{.5in}\Biggl[ \int_{0}^{\zeta_n(A_i-A_{i-n})z_1+\zeta_n(A_j-A_{j-n})z_2}e^x
           f_{nij}\bigl(-x/(\sigma_n\zeta_n)\bigr)\, dx\\
\nonumber &\hspace{.5in} - \int_0^{\zeta_n(A_i-A_{i-n})z_1} e^x
           f_{nij}\bigl(-x/(\sigma_n\zeta_n)\bigr)\, dx\\
\nonumber &\hspace{.5in}-  \int_0^{\zeta_n(A_j-A_{j-n})z_2}e^x
           f_{nij}\bigl(-x/(\sigma_n\zeta_n)\bigr)\, dx \Biggr]\,.
\end{align}

For fixed  $z_1,z_2\in\bbr$,  the expression inside the square
brackets can be rewritten as 
\begin{align*}
&\left(e^{\zeta_n(A_i-A_{i-n})z_1}-1\right)\int_0^{\zeta_n(A_j-A_{j-n})z_2}e^x\\
&\hskip1.5in
            f_{nij}\bigl(-(x+\zeta_n(A_i-A_{i-n})z_1)/(\sigma_n\zeta_n)\bigr)\,
  dx\\
& + \int_0^{\zeta_n(A_j-A_{j-n})z_2}e^x\\
&\hskip0.5in\Biggl[ 
f_{nij}\bigl(-(x+\zeta_n(A_i-A_{i-n})z_1)/(\sigma_n\zeta_n)\bigr)- f_{nij}\bigl(-x/(\sigma_n\zeta_n)\bigr)\Biggr]dx.  
\end{align*}
By Taylor's theorem, 
\[
f_{nij}\left(-\,\frac{x+\zeta_n(A_i-A_{i-n})z_1}{\sigma_n\zeta_n}\right)=f_{nij}(0)-\,\frac{x+\zeta_n(A_i-A_{i-n})z_1}{\sigma_n\zeta_n}f_{nij}'(0)
\]
\[
+O\left(\frac{(x+\zeta_n(A_i-A_{i-n})z_1)^2}{\sigma_n^2\zeta_n^2}\|f_{nij}''\|_\infty\right)\,.
\]
Using this and \eqref{e:A.incr}, straightforward algebra gives us
\begin{align*}
&  \int_0^{\zeta_n(A_j-A_{j-n})z_2}e^x
    f_{nij}\bigl(-(x+\zeta_n(A_i-A_{i-n})z_1)/(\sigma_n\zeta_n)\bigr)\,  dx
\\
& =f_{nij}(0)\left(e^{\zeta_n(A_j-A_{j-n})z_2} - 1\right)\\
&+O\Biggl(e^{\zeta_n|A_j-A_{j-n}||z_2|}\Bigl(|f_{nij}'(0)|\sigma_n^{-1}\zeta_nn^{1-\alpha}|A_j-A_{j-n}||z_2|\bigl(|z_1|+|z_2|\bigr)\\
&\hspace{0.5in}+\|f_{nij}''\|_\infty\sigma_n^{-2}\zeta_nn^{2-2\alpha}|A_j-A_{j-n}||z_2|\bigl(|z_1|+|z_2|\bigr)^2\Bigr)\Biggr)\,.
\end{align*}
The obvious inequality $|e^x-1|\leq
|x|e^{|x|}$ for $x\in\bbr$ along with Lemma
\ref{lpm.newl1}  now show that 
\begin{align*}
\nonumber &\left(e^{\zeta_n(A_i-A_{i-n})z_1}-1\right)
\int_0^{\zeta_n(A_j-A_{j-n})z_2}e^x\\
&\hskip1.5in    f_{nij}\bigl(-(x+\zeta_n(A_i-A_{i-n})z_1)/(\sigma_n\zeta_n)\bigr)\,  dx\\
&= f_{nij}(0)\left(e^{\zeta_n(A_i-A_{i-n})z_1}-1\right)\left(e^{\zeta_n(A_j-A_{j-n})z_2}-1\right)\\
\nonumber& +o\Bigl(\sigma_n^{-2}\left|(A_i-A_{i-n})(A_j-A_{j-n})z_1z_2\right|\left(|z_1|+|z_2|\right)^2\\
&\hskip1.5in e^{\zeta_n(|A_i-A_{i-n}||z_1|+|A_j-A_{j-n}||z_2|)}\Bigr)\,,
\end{align*}
uniformly for $i,j\ge0$ with $i\neq j$ and $z_1,z_2\in\bbr$.  

Treating in a similar manner the second term, we conclude that the
expression inside the square brackets in the right hand side of
\eqref{lpm.ct.eq4} equals
\begin{align*}
&f_{nij}(0) \left(e^{\zeta_n(A_i-A_{i-n})z_1}-1\right) \left(e^{\zeta_n(A_j-A{j-n})z_2}-1\right)\\
&+o\Bigl(\sigma_n^{-2}\left| (A_i-A_{i-n})(A_j-A_{j-n})\right|(1+|z_1|^3)(1+|z_2|^3)\\
&\hskip1.5in 
e^{\zeta_n|(A_i-A_{i-n})z_1|+\zeta_n|(A_j-A_{j-n})z_2|}\Bigr)\,,
\end{align*}
uniformly for $i,j\ge0$ with $i\neq j$ and $z_1,z_2\in\bbr$, and
substitution into \eqref{lpm.ct.eq4} gives us 
\begin{align}
\nonumber&E\left(Z_{n-i-1}Z_{n-j-1}\one(E_0)\right)\\
\nonumber&=c_{nij}\Biggl[f_{nij}(0)\int_{-\infty}^\infty
  z_1e^{\zeta_n(A_i-A_{i-n})z_1}F_Z(dz_1)
  \int_{-\infty}^\infty  z_2e^{\zeta_n(A_j-A_{j-n})z_2}F_Z(dz_2)\\
\nonumber&\hspace{.5in} +o\Bigl(\sigma_n^{-2}\left| (A_i-A_{i-n})(A_j-A_{j-n})\right| \Bigr)\Biggr]\end{align}
\begin{align}
\label{lpm.ct.eq1}&=K_n(2\pi)^{1/2}f_{nij}(0)\int_{-\infty}^\infty
                    z_1\, G_{\zeta_n(A_i-A_{i-n})}(dz_1)
                    \int_{-\infty}^\infty z_2\, G_{\zeta_n(A_j-A_{j-n})}(dz_2)\\
\nonumber&\hspace{.5in}+c_{nij}o\Bigl(\sigma_n^{-2}\left| (A_i-A_{i-n})(A_j-A_{j-n})\right| \Bigr)\,,
\end{align}
as $n\to\infty$,  uniformly for $i,j\ge0$ with $i\neq j$.  Recalling
that $EZ=0$, we see that 
\[
\int_{-\infty}^\infty z_1\,G_{\zeta_n(A_i-A_{i-n})}(dz_1)=O\left(\zeta_n(A_i-A_{i-n})\right)\,,
\]
and likewise for the second integral in \eqref{lpm.ct.eq1}. Since
$K_n=O(c_{nij})$, the claim \eqref{e:cros..pr}  follows from Lemma
\ref{lpm.newl1}.  
\end{proof}

The next lemma is an important step in the proof of the main result;
the previous lemmas \ref{lpm.newl0}, \ref{lpm.newl1} and \ref{lpm.ct}
are needed for this lemma. We denote 
\begin{equation}
\label{eq.defyni}Y_{ni}=Z_{n-i-1} -
\left(1+\zeta_n^{-2}\sigma_n^{-2}\right) \int_{-\infty}^\infty z\,
G_{\zeta_n(A_i-A_{i-n})}(dz), \, i\in\bbz, \, n\ge1\,.
\end{equation}

\begin{lemma}\label{lpm.newl2}
 Suppose that \eqref{e:moment.eq}  and\eqref{eq.chf} hold. Then 
\begin{equation}\label{lpm.newl2.eq1}
 \sup_{n\ge1,i\ge0}E\left(Y_{ni}^2\bigr|E_0\right)<\infty\,,
\end{equation}
and
\begin{align}\label{lpm.newl2.eq2}
E\left(Y_{ni}Y_{nj}\bigr|E_0\right) =-\sigma_n^{-2}\sigma_Z^4\left(A_i-A_{i-n}\right)\left(A_j-A_{j-n}\right)\left(1+o(1)\right)
\end{align}
as $n\to\infty$,  uniformly in $i,j\ge0$ with $i\neq j$.
\end{lemma}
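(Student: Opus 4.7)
The plan is to prove the second-moment identity \eqref{lpm.newl2.eq2} by a direct expansion of $Y_{ni}Y_{nj}$ and then applying Lemma \ref{lpm.ct} term by term. Writing $\rho_n:=\zeta_n^{-2}\sigma_n^{-2}$ and $m_{ni}:=\int_{-\infty}^\infty z\,G_{\zeta_n(A_i-A_{i-n})}(dz)$, the definition \eqref{eq.defyni} gives
\[
Y_{ni}Y_{nj}=Z_{n-i-1}Z_{n-j-1}-(1+\rho_n)m_{nj}Z_{n-i-1}-(1+\rho_n)m_{ni}Z_{n-j-1}+(1+\rho_n)^2m_{ni}m_{nj}.
\]
Taking $E(\,\cdot\,\one(E_0))$ and substituting the formulas \eqref{lpm.newl2.eq5} and \eqref{e:cros..pr} of Lemma \ref{lpm.ct}, the leading $K_nm_{ni}m_{nj}$ contributions collapse into
\[
m_{ni}m_{nj}\bigl[(1+\rho_n)^2P(E_0)-(1+2\rho_n)K_n\bigr],
\]
while all error terms generated are bounded by $K_n\,o\bigl(\sigma_n^{-2}|A_i-A_{i-n}||A_j-A_{j-n}|\bigr)$. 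Since $m_{ni}\sim \zeta_n\sigma_Z^2(A_i-A_{i-n})$ by \eqref{e:tilt.facts}, one has $m_{ni}m_{nj}\sim\zeta_n^2\sigma_Z^4(A_i-A_{i-n})(A_j-A_{j-n})$, and these errors are $o(\rho_n K_n|m_{ni}m_{nj}|)$. Thus it suffices to pin down $P(E_0)/K_n$ to within an $o(\rho_n)$ correction.

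The crucial step is the refined saddle-point asymptotic
\begin{equation*}
P(E_0)=K_n\bigl(1-\rho_n(1+o(1))\bigr),\qquad n\to\infty.
\end{equation*}
I would derive this by changing measure via $\zeta_n$ and using the density $f_n$ from Lemma \ref{lpm.newl1}:
\[
P(E_0)=e^{-\zeta_nn\vep}E\bigl(e^{\zeta_nS_n}\bigr)(\zeta_n\sigma_n)^{-1}\int_0^\infty e^{-w}f_n\bigl(w/(\zeta_n\sigma_n)\bigr)\,dw.
\]
Using \eqref{lpm.newl1.eq4} to replace $f_n$ by $(2\pi)^{-1/2}e^{-x^2/2}$ uniformly at rate $o(n^{1-2\alpha})$, Taylor-expanding the Gaussian factor around $0$, and evaluating $\int_0^\infty e^{-w}w^2\,dw=2$ extract the correction $-\rho_n$. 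Plugging into the combined term yields
\[
(1+\rho_n)^2P(E_0)-(1+2\rho_n)K_n=-\rho_nK_n(1+o(1)),
\]
so $E(Y_{ni}Y_{nj}\one(E_0))=-\rho_nK_nm_{ni}m_{nj}(1+o(1))$. Dividing by $P(E_0)\sim K_n$ and substituting $m_{ni}m_{nj}\sim\zeta_n^2\sigma_Z^4(A_i-A_{i-n})(A_j-A_{j-n})$ gives $-\rho_n\zeta_n^2\sigma_Z^4(A_i-A_{i-n})(A_j-A_{j-n})=-\sigma_n^{-2}\sigma_Z^4(A_i-A_{i-n})(A_j-A_{j-n})$, which is \eqref{lpm.newl2.eq2}.

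For the uniform bound \eqref{lpm.newl2.eq1}, the same expansion with $i=j$ reduces the task to bounding $E(Z_{n-i-1}^2\mid E_0)$ uniformly in $i$ (since $m_{ni}=O(n^{\alpha-1})\to 0$ is bounded, and $E(Z_{n-i-1}\mid E_0)$ is bounded by \eqref{lpm.newl2.eq5}). Conditioning on $Z_{n-i-1}=z$, using independence of $S_n'(i)$ and $Z_{n-i-1}$, and a Chebyshev-type saddle-point bound
\[
P\bigl(S_n'(i)\ge n\vep-(A_i-A_{i-n})z\bigr)=O\bigl(e^{\zeta_n(A_i-A_{i-n})z}K_n\bigr),
\]
(which follows by the same tilting and density-integration as above applied to $\hat S_n(i)$, via Lemma \ref{lpm.newl1}), one obtains $E(Z_{n-i-1}^2\one(E_0))=O(K_n\int z^2e^{\zeta_n(A_i-A_{i-n})z}F_Z(dz))=O(K_n\sigma_Z^2)$ uniformly in $i\ge0$, and dividing by $P(E_0)\sim K_n$ gives the desired uniform bound.

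The principal obstacle is locating the correction $-\rho_n$ in $P(E_0)/K_n$: since $\rho_n\sim \text{const}\cdot n^{1-2\alpha}$ is exactly the order of the uniform error in Lemma \ref{lpm.newl1}, one must verify that integrating against $e^{-w}$ still lets the quadratic term $-w^2/(2\zeta_n^2\sigma_n^2)$ in the Taylor expansion of the Gaussian emerge cleanly above the noise, and that all $o(\,\cdot\,)$ error terms in Lemma \ref{lpm.ct} remain strictly smaller than $\rho_n K_n|m_{ni}m_{nj}|$. Everything else is essentially bookkeeping.
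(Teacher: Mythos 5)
Your proposal is correct and follows essentially the same route as the paper: both rest on Lemma \ref{lpm.ct} together with the refined expansion $P(E_0)=K_n\bigl(1-\zeta_n^{-2}\sigma_n^{-2}(1+o(1))\bigr)$ obtained by integrating \eqref{lpm.newl1.eq4} against $e^{-x}$, and the cancellation you exhibit is the same one the paper organizes slightly differently (first computing the conditional covariance of $Z_{n-i-1}$ and $Z_{n-j-1}$, then identifying the centring in $Y_{ni}$ with the conditional mean up to a negligible error). Your observation that the $o(n^{1-2\alpha})$ error in Lemma \ref{lpm.newl1} sits at exactly the order of $\zeta_n^{-2}\sigma_n^{-2}$ is the right point to worry about, and the little-$o$ (rather than big-$O$) in \eqref{lpm.newl1.eq4} is precisely what makes the argument close.
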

\begin{proof}
We  prove \eqref{lpm.newl2.eq2}; the proof of \eqref{lpm.newl2.eq1} is
similar (and much easier). We write
\begin{align*}
P(E_0)&=K_n(2\pi)^{1/2}\int_0^\infty e^{-x}f_n\bigl( 
        x/(\zeta_n\sigma_n)\bigr)\, dx, 
\end{align*}
with $K_n$ as in \eqref{lpm.ct.eq2}.  By \eqref{lpm.newl1.eq4} and
simple integration, 
\begin{align}\label{lpm.newl2.eq0} 
P(E_0)=&K_n
(2\pi)^{1/2}\left[o\left(\zeta_n^{-2}\sigma_n^{-2}\right)+(2\pi)^{-1/2}\int_0^\infty
  \exp\bigl(-x-x^2/(2\zeta_n^2\sigma_n^2)\bigr)\, dx\right] \\
\notag =&K_n\left[1-\zeta_n^{-2}\sigma_n^{-2}\left(1+o(1)\right)\right], \, n\to\infty\,.
\end{align}
In combination with \eqref{e:cros..pr}  this means that 
\begin{align*}
&E\left(Z_{n-i-1}Z_{n-j-1}\one(E_0)\right)P(E_0)\\
&=K_n^2\Biggl(\left(1-\zeta_n^{-2}\sigma_n^{-2}\right)\int_{-\infty}^\infty
           z_1\, G_{\zeta_n(A_i-A_{i-n})}(dz_1) \int_{-\infty}^\infty
           z_2\, G_{\zeta_n(A_j-A_{j-n})}(dz_2)\\
&\hspace{0.5in}
           +o\left(\sigma_n^{-2}|(A_i-A_{i-n})(A_j-A_{j-n})|\right)\Biggr),
           \, n\to\infty,
\end{align*}
uniformly in $i,j\ge0$ with $i\neq j$.  Since by 
\eqref{lpm.newl2.eq5}, 
\begin{align*}
&E\left(Z_{n-i-1}\one(E_0)\right)E\left(Z_{n-j-1}\one(E_0)\right)\\
&=K_n^2\int_{-\infty}^\infty z_1\,G_{\zeta_n(A_i-A_{i-n})}(dz_1)\int_{-\infty}^\infty z_2\,G_{\zeta_n(A_j-A_{j-n})}(dz_2)\\
&\hspace{.5in}+o\left(K_n^2\sigma_n^{-2}|A_i-A_{i-n}||A_j-A_{j-n}|\right)\,,
\end{align*}
we conclude that
\begin{align*}
&E\left(Z_{n-i-1}Z_{n-j-1}\one(E_0)\right)P(E_0)-E\left(Z_{n-i-1}\one(E_0)\right)E\left(Z_{n-j-1}\one(E_0)\right)\\
&=-K_n^2\zeta_n^{-2}\sigma_n^{-2}\int_{-\infty}^\infty z_1\,G_{\zeta_n(A_i-A_{i-n})}(dz_1)
\int_{-\infty}^\infty z_2\, G_{\zeta_n(A_j-A_{j-n})}(dz_2)\\
&\hspace{.5in}+o\left(K_n^2\sigma_n^{-2}|A_i-A_{i-n}||A_j-A_{j-n}|\right)\\
&=-K_n^2\sigma_n^{-2}\sigma_Z^4(A_i-A_{i-n})(A_j-A_{j-n})\left(1+o(1)\right)
\end{align*}
as $n\to\infty$, uniformly in $i,j\ge0$ with $i\neq j$.  Dividing both
sides by $P(E_0)^2$ and using \eqref{lpm.newl2.eq0},  we obtain 
\begin{align}
\label{lpm.newl2.eq3}&E\left[\bigl(Z_{n-i-1}-E(Z_{n-i-1}|E_0)\bigr)\bigl(Z_{n-j-1}-E(Z_{n-j-1}|E_0)\bigr)\Bigr|E_0\right]\\
\nonumber&=-\sigma_n^{-2}\sigma_Z^4(A_i-A_{i-n})(A_j-A_{j-n})\left(1+o(1)\right)\,,
\end{align}
as $n\to\infty$, again uniformly for $i,j\ge0$ with $i\neq j$.  Since
by  \eqref{lpm.newl2.eq0} with \eqref{lpm.newl2.eq5} 
\begin{align*}
E\left(Z_{n-i-1}|E_0\right)
&=\left(1+\zeta_n^{-2}\sigma_n^{-2}\right)\int_{-\infty}^\infty z\, G_{\zeta_n(A_i-A_{i-n})}(dz)\\
&\hskip2in+o\left(\zeta_n^{-1}\sigma_n^{-2}|A_i-A_{i-n}|\right)\,,
\end{align*}
with a similar statement for $Z_{n-j-1}$, 
\eqref{lpm.newl2.eq3} implies \eqref{lpm.newl2.eq2}. 
 \end{proof}

We proceed with establishing conditional distributional limits of certain
truncated sums. 
\begin{lemma}\label{lpm.l4}
 Suppose that \eqref{e:moment.eq}  and\eqref{eq.chf} hold. For
 $0<\delta<L$ denote  
\begin{align}
\label{lpm.l4.eq2}S_n(j,\delta,L)=&\sum_{i=[n^\beta\delta]}^{[n^\beta
  L]-1}(A_{i+j}-A_{i})Y_{ni}+\sum_{i=n-j}^{n-1}(A_{i+j}-A_{i+j-n}-A_i)Y_{ni}\\
\nonumber+&\sum_{i=n}^{n+[n^\beta L]}(A_{i+j}-A_{i+j-n}-A_i+A_{i-n})Y_{ni}, \ n\geq 1, \, j\geq 0.
\end{align}
With the overshoot $T_n^*$ as in \eqref{lpm.eq.defTn}, we have, 
conditionally on $E_0$, 
\begin{align}
\nonumber&\left(\zeta_nT_n^*,\left(n^{2\alpha-2}S_n([n^\beta
           t],\delta,L), \, t\ge0\right)\right)\\
\label{lpm.l4.eq3}&\Rightarrow \left(T_0,  \left( (1-\alpha)^{-1}\sigma_Z \left( \int_\delta^L\left[(s+t)^{1-\alpha}-s^{1-\alpha}\right] dB_1(s)\right. \right.\right.\\
\nonumber&\,\,\,\,\,\, \,\,\,\,  +\left.\left.\left.
           \int_0^t(t-s)^{1-\alpha} dB_2(s)+\int_0^L
           \left[s^{1-\alpha}-(s+t)^{1-\alpha}\right]dB_3(s)\right),
           \,            t\ge0\right)\right) 
\end{align}
in finite dimensional distributions as $n\to\infty$, where $T_0$ is a standard
exponential random variable independent of independent standard
Brownian motions $B_1,B_2,B_3$,
 \end{lemma}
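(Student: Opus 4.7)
My plan is to extend the joint convergence of Lemma~\ref{lpm.l3} to the three-term sums $S_n([n^\beta t],\delta,L)$ of \eqref{lpm.l4.eq2} by reducing the problem to a Riemann-sum approximation driven by the increments of $\xi_n^1$, $\xi_n^2$, $\xi_n^3$, and then invoking Lemma~\ref{lpm.l3}. The starting observation is that, for $n$ large, the three sums in \eqref{lpm.l4.eq2} involve pairwise disjoint blocks of the noise, namely exactly the blocks driving $\xi_n^1$, $\xi_n^2$, $\xi_n^3$, respectively. This explains the three independent Brownian motions in the limit and allows me to treat the three pieces separately up to their mutual conditional covariance.

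I would next introduce partitions of $[\delta,L]$ (for the first piece), $[0,t]$ (for the second), and $[0,L]$ (for the third), say $\delta=s_0<s_1<\cdots<s_K=L$ for piece~1. Using \eqref{lpm.l0.eq1} I would approximate the coefficient $A_{i+[n^\beta t]}-A_i$ by its value at the midpoint $s_\ell^\ast$ of $[s_{\ell-1},s_\ell]$ on each sub-range $i\in[[n^\beta s_{\ell-1}],[n^\beta s_\ell]-1]$. After this piecewise-constant replacement, the first piece becomes, modulo a small error,
\[
\sum_{\ell=1}^K \frac{(s_\ell^\ast+t)^{1-\alpha}-(s_\ell^\ast)^{1-\alpha}}{(s_\ell^\ast)^{1-\alpha}}\sum_{i=[n^\beta s_{\ell-1}]}^{[n^\beta s_\ell]-1} A_i Y_{ni},
\]
which, modulo centering, is a finite linear combination of rescaled increments of $\xi_n^1$. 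Analogous reductions work for the second and third pieces, using the coefficient asymptotics of $A_{i+j}-A_{i+j-n}-A_i$ near $i=n$ and of $A_{i+j}-A_{i+j-n}-A_i+A_{i-n}$ for $i\ge n$.

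The centering needs to be reconciled: $Y_{ni}$ uses the factor $(1+\zeta_n^{-2}\sigma_n^{-2})$ which is absent from the centering in Lemma~\ref{lpm.l3}. Since $\int z\,G_{\zeta_n(A_i-A_{i-n})}(dz)=\sigma_Z^2\zeta_n(A_i-A_{i-n})(1+o(1))$ by \eqref{e:tilt.facts} and $\zeta_n^{-1}\sim\sigma_n^2/(n\vep)$ by \eqref{lpm.l0.eq2}, the extra contribution of this factor in each piece is $O(n^{1-2\alpha})=o(1)$ after multiplication by $n^{2\alpha-2}$, hence is negligible. I would then invoke Lemma~\ref{lpm.l3} with the union of all break points from the three partitions to obtain the joint conditional-on-$E_0$ convergence of $\zeta_n T_n^\ast$ and the scaled $\xi_n^k$-increments to $T_0$ and independent Brownian increments of $B_1,B_2,B_3$. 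The continuous mapping theorem then converts the Riemann sums above into Riemann sums for the Brownian increments, and letting the partition mesh shrink to zero delivers the three stochastic integrals in \eqref{lpm.l4.eq3}. Joint convergence at several values of $t$ is obtained by refining the partitions to contain simultaneously all the break points required for all $t$-values.

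The main obstacle will be the uniform-in-$n$ control of the Riemann-sum error needed to interchange partition refinement with $n\to\infty$. The conditional moment estimates of Lemma~\ref{lpm.newl2} are the essential tool here: the diagonal bound \eqref{lpm.newl2.eq1} controls the conditional second moment of the error in each piece, while the off-diagonal bound \eqref{lpm.newl2.eq2}, summed across different pieces, shows that the scaled cross conditional covariance between two distinct pieces is of order $n^{1-2\alpha}\to 0$. This delivers the conditional asymptotic independence of the three pieces reflected in the three independent Brownian motions of the limit.
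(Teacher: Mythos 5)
Your proposal follows essentially the same route as the paper: split $S_n([n^\beta t],\delta,L)$ into the three blocks matching $\xi_n^{1\circ},\xi_n^{2\circ},\xi_n^{3\circ}$, replace the coefficient ratios by piecewise-constant approximations so each block becomes a linear combination of increments handled by Lemma \ref{lpm.l3}, absorb the $(1+\zeta_n^{-2}\sigma_n^{-2})$ centring discrepancy as an $O(n^{1-2\alpha})$ term, and control the approximation error via the conditional moment bounds of Lemma \ref{lpm.newl2} before letting the mesh shrink (the paper's ``convergence together'' step). The only cosmetic difference is that the paper takes infima rather than midpoints in the piecewise-constant approximation so that the nonpositivity of the off-diagonal conditional covariances can be used to drop cross terms outright, whereas your midpoint version requires bounding those cross terms directly via \eqref{lpm.newl2.eq2} — which also works.
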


\begin{proof}
For $n\ge1$ and $t\ge0$ we write 
\begin{align*}
\xi_n^{1\circ}(t)&=\sum_{i=1}^{[n^\beta t]}A_iY_{ni}, \ \ 
\xi_n^{2\circ}(t)=\sum_{i=n-[n^\beta t]}^{n-1}A_iY_{ni}, 
\end{align*}
\begin{align*}
\xi_n^{3\circ}(t)=\sum_{i=n+1}^{n+[n^\beta
                   t]}\left(A_i-A_{i-n}\right)Y_{ni}. 
\end{align*}
It follows from Lemma \ref{lpm.l3} that, conditionally on $E_0$,
\begin{align}\label{lpm.l4.eq1}
 &\Biggl(\zeta_nT_n^*,\left(n^{2\alpha-2}\xi_n^{1\circ}(t):t\ge0\right),\left(n^{\alpha-\beta/2-1}\xi_n^{2\circ}(t):t\ge0\right),\\
\notag                &\hskip2in 
  \left(n^{\alpha-\beta/2-1}\xi_n^{3\circ}(t):t\ge0\right)\Biggr)
\end{align}
\begin{align*}
  \Rightarrow &\,\Bigl(T_0,\bigl(K_1^{1/2}\sigma_Z
                                 B_1(t^{3-2\alpha}):t\ge0\bigr), 
\notag \bigl((1-\alpha)^{-1}\sigma_Z
                B_2(t):t\ge0\bigr),\\
                &\hskip2in 
                \bigl((1-\alpha)^{-1}\sigma_Z B_3(t):t\ge0\bigr)\Bigr) \nonumber 
\end{align*}
 because the difference between the two processes vanishes in the
 limit. For example, 
\[
n^{2\alpha-2}\zeta_n^{-2}\sigma_n^{-2}\sum_{i=1}^{[n^\beta
  t]}A_i\int_{-\infty}^\infty z\, G_{\zeta_nA_i}(dz)=O\left(n^{1-2\alpha}\right)=o(1)\,,
\]
and similarly with the other two components. Furthermore, for large $n$,
\begin{align*}
&S_n([n^\beta t],\delta,L)  \\
&= \sum_{i=[n^\beta\delta]}^{[n^\beta
                    L]-1}(A_{i+[n^\beta t]} -A_i)Y_{ni}
+\sum_{i=n-[n^\beta t]}^{n-1} (A_{i+[n^\beta t]}-
   A_{i+[n^\beta t]-n} -A_i)Y_{ni}  \\
  +&\sum_{i=n}^{n+[n^\beta L]}(A_{i+[n^\beta t]}-A_{i+[n^\beta t]-n}-A_i+A_{i-n})Y_{ni} 
=: V_n^1(t)+V_n^2(t)+V_n^3(t).
\end{align*}

Starting with  $V_n^3$, we write 
\begin{equation} \label{e:V3}
V_n^3(t)=n^{-(1-\alpha)(1-\beta)}\sum_{i=1}^{[n^\beta L]}f_n\left(n^{-\beta}i,t\right)\left(A_{n+i}-A_i\right)Y_{n,n+i}\,,
\end{equation}
where for $0\leq s\leq L$, 
\[
f_n(s,t)=n^{(1-\alpha)(1-\beta)}\frac{A_{n+[n^\beta s]+[n^\beta t]}-A_{[n^\beta s]+[n^\beta t]}-A_{n+[n^\beta s]}+A_{[n^\beta s]}}{A_{n+[n^\beta s]}-A_{[n^\beta s]}}\,.
\]
It is elementary that for  fixed $s,t$, as $n\to\infty$, 
\begin{align*}
A_{n+[n^\beta s]+[n^\beta t]}-A_{n+[n^\beta s]}&\ll A_{[n^\beta s]+[n^\beta t]} - A_{[n^\beta s]}\\
&\sim     (1-\alpha)^{-1}n^{\beta(1-\alpha)}\left[(s+t)^{1-\alpha}-s^{1-\alpha}\right]\,,   
\end{align*}
while $A_{n+[n^\beta s]}-A_{[n^\beta s]}\sim
(1-\alpha)^{-1}n^{1-\alpha}$. Therefore, 
\begin{equation}
\label{lpm.l5.eq5}\lim_{n\to\infty}f_n(s,t)=s^{1-\alpha}-(s+t)^{1-\alpha} =:f(s,t),
\end{equation}
and the limit is easily seen to be uniform in $0\leq s\leq L$ and $t$
in a compact interval. We will
show that, conditionally on $E_0$, 
\begin{align}
\label{lpm.l5.eq8}
&\bigl(n^{2\alpha-2}V_n^3(t), \, t\geq 0\bigr)\\
\notag&\Rightarrow
\left( \sigma_Z(1-\alpha)^{-1}\int_0^L
\left[s^{1-\alpha}-(s+t)^{1-\alpha}\right]dB_3(s),\, t\geq 0\right) 
\end{align}
in finite-dimensional distributions, as $n\to\infty$.  To this end,
set  
\[
c_{nj}(k,t)=\inf_{(j-1)L/k\le s\le jL/k}f_n(s,t), \, k\ge1, \, 1\le j\le k\,,
\]
and
\[
e_{ni}(k,t)= f_n\bigl(n^{-\beta}i, t\bigr) - c_{n,\lceil L^{-1}n^{-\beta}
  ki\rceil}(k,t)\geq 0, \, k\ge1,\, 1\le i\le[n^\beta L]\,.
\]
By \eqref{lpm.l5.eq5} and monotonicity, 
\begin{equation}
\label{l4.eq5}\lim_{n\to\infty}c_{nj}(k,t)=f\left((j-1)k^{-1}L,t\right),
\, 1\le j\le k\,. 
\end{equation}
A standard continuity argument shows that 
\begin{equation}
\label{l4.eq6}\lim_{k\to\infty}\limsup_{n\to\infty}\sup_{t\in A}\max_{1\le i\le[n^\beta L]}e_{ni}(k,t)=0
\end{equation}
for any compact set $A$. We have 
\begin{align*}
&\sum_{i=1}^{[n^\beta L]} c_{n,\lceil L^{-1}n^{-\beta} ki\rceil}(k,t)(A_{n+i}-A_i)Y_{n,n+i}\\
&=\sum_{j=1}^{k^\prime} c_{nj}(k,t)\sum_{i\in\bigl(k^{-1}Ln^\beta(j-1),k^{-1}Ln^\beta j\bigr]\cap\bbz}(A_{n+i}-A_i)Y_{n,n+i}\\
&=\sum_{j=1}^{k^\prime}c_{nj}(k,t)\Bigl(\xi_n^{3\circ}\bigl(
                                                                                                                                 k^{-1}Lj\bigr)-\xi_n^{3\circ}\bigl(  k^{-1}L(j-1)\bigr)\Bigr)
=:W_{nk}(t)\,,
\end{align*}
where $k^\prime = \lceil L^{-1}n^{-\beta} k[n^\beta L]\rceil$. 
This, together with \eqref{lpm.l4.eq1} and \eqref{l4.eq5}, implies  that for fixed $k$, as $n\to\infty$,
\begin{align}\label{e:W.nk}
&\bigl( n^{\alpha-\beta/2-1}W_{nk}(t), \, t\geq 0\bigr)\\
\notag \Rightarrow&\Biggl( (1-\alpha)^{-1}\sigma_Z\sum_{j=1}^k
     f\left((j-1)k^{-1}L,t\right)\left(B_3(k^{-1}jL)-B_3(k^{-1}(j-1)L)\right),  \\
\notag&\hskip3in  \, t\geq 0\Biggr)                                                        
\end{align}
in finite-dimensional distributions. We have 
\begin{align*}
&\sum_{i=1}^{[n^\beta
  L]}f_n\bigl(n^{-\beta}i,t\bigr)\left(A_{n+i}-A_i\right)Y_{n,n+i}-W_{nk}(t)\\
  &=\sum_{i=1}^{[n^\beta L]} e_{ni}(k,t)\left(A_{n+i}-A_i\right)Y_{n,n+i}\,.
\end{align*}

It follows from \eqref{lpm.newl2.eq2}  that, for large $n$, 
$$
\sup_{i,j\ge0:i\neq
  j}\left(A_i-A_{i-n}\right)\left(A_j-A_{j-n}\right)E\left(Y_{ni}Y_{nj}|E_0\right)\le0\,.
$$
This, along with \eqref{lpm.newl2.eq1} and the non-negativity of each
$e_{ni}$,  implies that for large $n$, 
\begin{align*}
&E\left(\left[\sum_{i=1}^{[n^\beta L]} e_{ni}(k,t)\left(A_{n+i}-A_i\right)Y_{n,n+i}\right]^2\Biggr|E_0\right)\\
&\le\sum_{i=1}^{[n^\beta L]} \left[e_{ni}(k,t)\left(A_{n+i}-A_i\right)\right]^2E(Y_{n,n+i}^2|E_0)\\
&=O\left(\max_{1\le j\le[n^\beta L]}e_{nj}(k,t)^2\sum_{i=1}^{[n^\beta L]}\left(A_{n+i}-A_i\right)^2\right)\\
&=O\left(n^{2-2\alpha+\beta}\max_{1\le j\le[n^\beta
L]}e_{nj}(k,t)^2\right)\,. 
\end{align*}
  Invoking \eqref{l4.eq6} we conclude that for any compact set $A$, 
\begin{align}\label{lpm.l4.eq8}
&\lim_{k\to\infty}\limsup_{n\to\infty}n^{2\alpha-\beta-2}\sup_{t\in
           A}E\Biggl[\Biggl(W_{nk}(t)\\
\notag           &\hskip1in-\sum_{i=1}^{[n^\beta L]}f_n\bigl(n^{-\beta}i,t\bigr)\left(A_{n+i}-A_i\right)Y_{n,n+i}\Biggr)^2\Biggr|E_0\Biggr]
=0\,.
\end{align}
 As $k\to\infty$, the process in the right hand side of \eqref{e:W.nk}
 converges in
finite-dimensional distributions to the process in the right-hand side
of \eqref{lpm.l5.eq8}.  Since
$(2\alpha-2)-(1-\alpha)(1-\beta)=\alpha-\beta/2-1$, the claim
\eqref{lpm.l5.eq8} follows from \eqref{e:V3} and \eqref{lpm.l4.eq8} 
by the ``convergence together'' argument; see Theorem 3.2 in
\cite{billingsley:1999}.

A nearly identical argument shows that, conditionally on $E_0$, 
\begin{align}\label{lpm.l4.eq9}
 \bigl(n^{2\alpha-2}V_n^2(t), \, t\geq 0\bigr)\Rightarrow&
\left( -\sigma_Z(1-\alpha)^{-1}\int_0^t
  (t-s)^{1-\alpha}dB_2(s),\, t\geq 0\right)  \\
 \notag \eid&  \left( \sigma_Z(1-\alpha)^{-1}\int_0^t
  (t-s)^{1-\alpha}dB_2(s),\, t\geq 0\right)
\end{align}
in finite-dimensional distributions.

The situation with the term $V_n^1$ is, once again, similar, with a
small twist. Since 
\[
\lim_{n\to\infty}\frac{A_{[n^\beta s]+[n^\beta t]} -A_{[n^\beta
    s]}}{A_{[n^\beta s]}}=\frac{(s+t)^{1-\alpha}-s^{1-\alpha}}{s^{1-\alpha}}
\]
uniformly for $\delta\le s\le L$ and $t$, our argument now shows that,
conditionally on $E_0$,  
\[
\bigl( n^{-(2-2\alpha)}V_n^1, \, t\geq 0\bigr)\
\Rightarrow
\left(\sigma_Z K_1^{1/2}\int_\delta^L  
\frac{(s+t)^{1-\alpha}-s^{1-\alpha}}{s^{1-\alpha}} M(ds), \, t\geq 0  \right)
\]
in finite-dimensional distributions, where $M$ is a centred Gaussian
random measure with the variance measure with the density
$(3-2\alpha)s^{2-2\alpha}$, $s>0$. Since the centred Gaussian random
measures $(1-\alpha)^{-1}B_3(ds)$ and $K_1^{1/2}M(ds)/s^{1-\alpha}$
have the same variance measure, this means that, conditionally on
$E_0$,  
\begin{align}\label{lpm.l5.eq9}
&\bigl(n^{2\alpha-2}V_n^2(t), \, t\geq 0\bigr)\\
\notag &\Rightarrow
\left( \sigma_Z(1-\alpha)^{-1}\int_\delta^L  
\bigl((s+t)^{1-\alpha}-s^{1-\alpha}\bigr)dB_3(s) ,\, t\geq 0\right)
\end{align}
in finite-dimensional distributions.

Since \eqref{lpm.l5.eq8}, \eqref{lpm.l4.eq9} and \eqref{lpm.l5.eq9}
are all consequences of \eqref{lpm.l4.eq1}, the convergence statements
they contain hold jointly, and jointly with  
$\zeta_n T_n^*\Rightarrow T_0$.   The claim \eqref{lpm.l4.eq3} follows.
\end{proof}

The next lemma treats the sequence of shifts appearing due to
conditioning on $E_0$.
\begin{lemma} \label{l:mu.n}
Define
\begin{align*}
&\mu_n(t)\\
\notag&=n^{2\alpha-2} \sum_{i=0}^\infty\left(A_{i+[n^\beta t]}-A_{i+[n^\beta
           t]-n}-A_i+A_{i-n}\right)   \int_{-\infty}^\infty
z\,  G_{\zeta_n(A_i-A_{i-n})}(dz),
\end{align*}
for $t\geq 0$ and $n\geq 1$. Then $\mu_n\rightarrow \mu_\infty$ as $n\to\infty$, in
$D([0,\infty))$ equipped with the Skorohod $J_1$ topology, where
$\mu_\infty(t) = -\vep t^{3-2\alpha}, \, t\geq 0$.
\end{lemma}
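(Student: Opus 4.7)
The plan is to linearize the tilted mean $\int z\, G_{\zeta_n B_i}(dz)$, compute the resulting leading term exactly using Lemma \ref{lpm.l0}, and control the remainder using the moment-matching condition \eqref{e:moment.eq}. Writing $B_i := A_i - A_{i-n}$, I would first note that \eqref{e:A.incr} and \eqref{lpm.l0.eq2} give $\zeta_n B_i = O(n^{\alpha-1})$ uniformly in $i$, so the tilting parameter is small. The moment-equality hypothesis forces the cumulants $\varphi_Z^{(k)}(0) = 0$ for $3 \le k \le \kappa$, so a Taylor expansion of $\varphi_Z'(\theta) = \int z\, G_\theta(dz)$ at the origin refines \eqref{e:tilt.facts} to
\[
\int z\, G_\theta(dz) = \theta \sigma_Z^2 + O(|\theta|^\kappa) \quad \text{as }\theta \to 0.
\]
This improvement is the key to obtaining a small enough remainder.

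Substituting yields $\mu_n(t) = M_n(t) + R_n(t)$ with
\[
M_n(t) = n^{2\alpha-2}\zeta_n\sigma_Z^2 \sum_{i=0}^\infty \bigl(B_{i+[n^\beta t]} - B_i\bigr) B_i .
\]
I would split the sum as $\sum_i B_{i+[n^\beta t]} B_i - \sum_i B_i^2$; by \eqref{lpm.l0.sigman} the second piece is $\sigma_n^2/\sigma_Z^2$, and \eqref{lpm.l0.eq4} identifies the first piece as $(\sigma_n^2/\sigma_Z^2)\bigl(1 - n^{1-2\alpha}t^{3-2\alpha}(1+o(1))\bigr)$, so their difference is $-(\sigma_n^2/\sigma_Z^2)n^{1-2\alpha}t^{3-2\alpha}(1+o(1))$. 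Inserting $\zeta_n = n\vep/\sigma_n^2$ and cancelling powers of $n$ yields $M_n(t) \to -\vep t^{3-2\alpha}$. Inspection of the proofs in Lemma \ref{lpm.l0} shows that the $o(1)$ terms there are locally uniform in $t$, so $M_n \to \mu_\infty$ locally uniformly.

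The main obstacle is bounding the remainder. I would estimate, using $|B_i|^\kappa \le (\max_j|B_j|)^{\kappa-1}|B_i|$ and then Cauchy--Schwarz,
\[
|R_n(t)| \le C n^{2\alpha-2}\zeta_n^\kappa \bigl(\max_i|B_i|\bigr)^{\kappa-1} \Bigl(\sum_i (B_{i+[n^\beta t]}-B_i)^2\Bigr)^{1/2} \Bigl(\sum_i B_i^2\Bigr)^{1/2}.
\]
By \eqref{e:A.incr}, $\max_i|B_i| = O(n^{1-\alpha})$; the telescoping computation surrounding \eqref{lpm.l0.eq10}--\eqref{lpm.l0.eq11} gives $\sum_i(B_{i+[n^\beta t]}-B_i)^2 = O(n^{4-4\alpha})$ locally uniformly in $t$; \eqref{lpm.l0.sigman} gives $\sum_i B_i^2 = O(n^{3-2\alpha})$; and $\zeta_n = O(n^{2\alpha-2})$. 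Collecting exponents,
\[
|R_n(t)| = O\!\left(n^{3/2 - \alpha - (\kappa+1)(1-\alpha)}\right).
\]
A direct rearrangement shows the exponent is negative precisely when $\kappa > 1/(2(1-\alpha))$, and since \eqref{eq.kappa} gives $\kappa > (4\alpha-1)/(2(1-\alpha))$ with $4\alpha - 1 > 1$ for $\alpha > 1/2$, this is guaranteed. Hence $R_n(t) = o(1)$ uniformly on compact $t$-intervals. Since $\mu_\infty$ is continuous, uniform convergence on compact intervals is equivalent to $J_1$ convergence in $D([0,\infty))$, completing the proof.
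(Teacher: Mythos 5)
Your decomposition is the same as the paper's: linearize the tilted mean, identify the leading term via \eqref{lpm.l0.eq4} and \eqref{lpm.l0.sigman}, and kill the remainder. Your treatment of the remainder is actually a clean improvement on the paper's: the paper only uses the $O(\theta^2)$ (resp.\ $O(|\theta|^3)$ when $\kappa\ge 3$) error in \eqref{e:tilt.facts} and is therefore forced into a case split at $\alpha=5/6$, whereas your cumulant-matching refinement $\int z\,G_\theta(dz)=\theta\sigma_Z^2+O(|\theta|^\kappa)$ (which is legitimate under \eqref{e:moment.eq}, and is of the same nature as the paper's own \eqref{lpm.newl0.eq2}) combined with Cauchy--Schwarz gives a single bound $O(n^{1/2-\kappa(1-\alpha)})$, and your verification that the exponent is negative by \eqref{eq.kappa} is correct. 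The pointwise computation of the main term is also correct.

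The gap is the sentence ``inspection of the proofs in Lemma \ref{lpm.l0} shows that the $o(1)$ terms there are locally uniform in $t$.'' That uniformity is precisely the content of the $D([0,\infty))$ claim beyond pointwise convergence, and it is not available by inspection: \eqref{lpm.l0.eq4} is proved for fixed $t$, and the quantities it rests on, such as $t\mapsto\sum_{i=0}^{n-1}(A_i-A_{i-[n^\beta t]})^2$, are \emph{not} monotone in $t$ in general, because \eqref{e:power.a} only guarantees that $a_j>0$ and $(a_j)$ is monotone for $j$ beyond some $i_0$; the first $i_0$ coefficients can have arbitrary signs. This is why the paper spends the bulk of its proof decomposing $\mu_n^{(1)}$ (via the polarization identity \eqref{lpm.l0.eq11}) into pieces that, after splitting off the initial segment $i<i_0$, are genuinely monotone in $t$ --- for which pointwise convergence to a continuous limit upgrades to locally uniform convergence by a P\'olya/Dini argument --- together with direct uniform $o(1)$ bounds on the leftover boundary pieces (the terms $\mu_n^{(1222)}$ etc.). Your proof needs this step, or some substitute for it (e.g.\ a uniform additive estimate $\sup_{t\le L}\bigl|\sum_{i=0}^{n-1}(A_i-A_{i-[n^\beta t]})^2-C_\alpha t^{3-2\alpha}n^{4-4\alpha}\bigr|=o(n^{4-4\alpha})$ proved directly); as written, the passage from pointwise to locally uniform convergence of $M_n$ is asserted rather than established.
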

\begin{proof}
  Writing
  \begin{align*}
\mu_n(t)
=&n^{2\alpha-2} \zeta_n\sum_{i=0}^\infty\left(A_{i+[n^\beta t]}-A_{i+[n^\beta
           t]-n}-A_i+A_{i-n}\right) \bigl( A_i-A_{i-n}\bigr)\\
+&n^{2\alpha-2} \sum_{i=0}^\infty\left(A_{i+[n^\beta t]}-A_{i+[n^\beta
           t]-n}-A_i+A_{i-n}\right)  \\
&\hskip 0.2in \left[ \int_{-\infty}^\infty
z\,  G_{\zeta_n(A_i-A_{i-n})}(dz)-\zeta_n\bigl( A_i-A_{i-n}\bigr)\right]\\
&=: \mu_n^{(1)}(t)+\mu_n^{(2)}(t), \, t\geq 0, 
  \end{align*}
 the claim of the lemma will follow once we prove that 
\begin{equation}  \label{e:mu.1}
\mu_n^{(1)}\rightarrow \mu_\infty  \ \ \text{in} \ \ 
D([0,\infty))
\end{equation}
and
\begin{equation}  \label{e:mu.2}
\mu_n^{(2)}(t) \to 0 \ \ \text{uniformly on compact intervals.}
\end{equation} 

We start by proving \eqref{e:mu.2}. Fix $L>0$ so that $0\leq t\leq
L$. Suppose first that
$1/2<\alpha<5/6$. By \eqref {e:tilt.facts}
\begin{align*}
&\bigl| \mu_n^{(2)}(t)\bigr| \\
=& O\left( 
n^{2\alpha-2} \zeta_n^2\sum_{i=0}^\infty\left|A_{i+[n^\beta t]}-A_{i+[n^\beta
           t]-n}-A_i+A_{i-n}\right| \bigl( A_i-A_{i-n}\bigr)^2\right)\\
=& O\left( 
n^{2\alpha-2} \zeta_n^2n^\beta \sum_{i=1}^\infty i^{-\alpha} \bigl(
  A_i-A_{i-n}\bigr)^2\right) 
=O\left( 
n^{2\alpha-2} \zeta_n^2n^\beta n^{3-3\alpha} \right)\to 0
\end{align*}
uniformly in $0\leq t\leq L$, showing \eqref{e:mu.2}. On the other
hand, if $\alpha\geq 5/6$, then $\kappa\geq 3$ in \eqref{e:moment.eq},
so by \eqref {e:tilt.facts}
\begin{align*}
&\bigl| \mu_n^{(2)}(t)\bigr| \\
=& O\left( 
n^{2\alpha-2} \zeta_n^3\sum_{i=0}^\infty\left|A_{i+[n^\beta t]}-A_{i+[n^\beta
           t]-n}-A_i+A_{i-n}\right| \bigl( A_i-A_{i-n}\bigr)^3\right)\\
=& O\left( 
n^{2\alpha-2} \zeta_n^3n^\beta \sum_{i=1}^\infty i^{-\alpha} \bigl(
  A_i-A_{i-n}\bigr)^3\right) 
=O\left( 
n^{2\alpha-2} \zeta_n^3n^\beta n^{4-4\alpha} \right)\to 0
\end{align*}
uniformly in $0\leq t\leq L$, again showing \eqref{e:mu.2}. 

We now prove \eqref{e:mu.1}. The pointwise
convergence is clear: for fixed $t$, 
\begin{align*}
\mu_n^{(1)}(t)&=
          \sigma_Z^2\sigma_n^{-2}n^{2\alpha-1}\vep\sum_{i=0}^\infty\left(A_{i+[n^\beta
          t]}-A_{i+[n^\beta
          t]-n}\right)(A_i-A_{i-n})-n^{2\alpha-1}\vep\\
          & \to-\vep t^{3-2\alpha}
\end{align*}
as $n\to\infty$, where we have used \eqref{lpm.l0.eq4}. Next, 
as in \eqref{lpm.l0.eq11} we can write for $t\geq 0$, 
\begin{align*}
 \mu_n^{(1)}(t)=& \frac{n^{2\alpha-2}\zeta_n}{2} 
\Biggl[\sum_{i=0}^{n-1}\left(A_i-A_{i-[n^\beta t]}\right)^2
\\
&\hskip1in  +\sum_{i=n-[n^\beta t]}^\infty
\left(A_{i+[n^\beta t]}-A_{i+[n^\beta t]-n}-A_i+A_{i-n}\right)^2\Biggr]\\
&=: \mu_n^{(11)}(t)+\mu_n^{(12)}(t). \notag 
\end{align*}
The claim \eqref{e:mu.1} will follow once we show that both
$\mu_n^{(11)}$ and $\mu_n^{(12)}$ converge in
$D([0,\infty))$ to continuous limits (both constant factors of
$\mu_\infty$). The fact that $\mu_n^{(11)}$ converges pointwise to a
constant factor of of the pointwise limit of $\mu_n^{(1)}$ is an
intermediate step in the proof of \eqref{lpm.l0.eq4}. Since
$\mu_n^{(11)}$ is a monotone function, its convergence in
$D([0,\infty))$ follows. 

We already know that $\mu_n^{(12)}$ converges pointwise to a
continuous limit. Let $i_0$ be such that $a_i$ is monotone for $i\geq
i_0$. Write for $t\geq 0$
\begin{align*}
\mu_n^{(12)}(t)=& \frac{n^{2\alpha-2}\zeta_n}{2} 
\Biggl[ \sum_{i=n+i_0}^\infty
\left(A_{i+[n^\beta t]}-A_{i+[n^\beta t]-n}-A_i+A_{i-n}\right)^2 \\
&\hskip 0.5in -\sum_{i=n-[n^\beta t]}^{n+i_0-1}
\left(A_{i+[n^\beta t]}-A_{i+[n^\beta t]-n}-A_i\right)^2\Biggr] \\
&=: \mu_n^{(121)}(t)-\mu_n^{(122)}(t),
\end{align*}
so it is enough to show that both $\mu_n^{(121)}$ and $\mu_n^{(122)}$
converge in $D([0,\infty))$ to continuous limits. Splitting further,
we write for $t\geq 0$,
\begin{align*}
  \mu_n^{(122)}(t)&= \frac{n^{2\alpha-2}\zeta_n}{2} 
\Biggl[ \sum_{i=n-[n^\beta t]}^{n+i_0-1} A_{i+[n^\beta t]-n}^2 \\
&+ \sum_{i=n-[n^\beta t]}^{n+i_0-1} \bigl( A_i-A_{i+[n^\beta t]}\bigr)
\bigl(  A_i-A_{i+[n^\beta t]}-2A_{i+[n^\beta t]-n}\bigr)\Biggr]\\
&=:  \mu_n^{(1221)}(t)+  \mu_n^{(1222)}(t).
\end{align*}
Clearly,
\begin{align*}
\mu_n^{(1221)}(t)=\frac{n^{2\alpha-2}\zeta_n}{2} \sum_{i=0}^{[n^\beta
  t]+i_0-1} A_i^2 
\end{align*}
converges pointwise to a constant factor of $\mu_\infty$. Since
$\mu_n^{(1221)}$ is monotone, we conclude that $\mu_n^{(1221)}$
converges in $D([0,\infty))$ to a continuous limit. In order to prove
that so does $\mu_n^{(122)}$, we will show that $\mu_n^{(1222)}(t)\to 0$
uniformly on compact intervals. Considering once again $0\leq t\leq
L$, we have
\begin{align*}
&\bigl| \mu_n^{(1222)}(t)\bigr|\\
 &\leq \frac{n^{2\alpha-2}\zeta_n}{2}  
 \sum_{i=n-[n^\beta t]}^{n+i_0-1} 
\bigl( A_{i+[n^\beta t]}-A_i\bigr)
\bigl[\big(  A_{i+[n^\beta t]}-A_i\bigr) +2A_{i+[n^\beta t]-n}\bigr]\\
&=O\left( n^{2\alpha-2}\zeta_n \sum_{i=n-[n^\beta t]}^{n+i_0-1} n^\beta
  n^{-\alpha} \bigl(  n^\beta
  n^{-\alpha} + n^{\beta(1-\alpha)}\bigr)\right)\\
  &=O\left(
         n^{\alpha-2}\zeta_n
             n^{3\beta-\beta\alpha}\right) \to 0
\end{align*}
uniformly over $0\leq t\leq L$, as required.

Finally, we already know that $\mu_n^{(121)}$ converges pointwise to a
continuous limit. Furthermore, by the choice of $i_0$, $\mu_n^{(121)}$
is a monotone function. Therefore, it converges in $D([0,\infty))$,
and the proof is complete. 
\end{proof}
  
The following is the final lemma before we prove Theorem
\ref{lpm.t1}. 

\begin{lemma}\label{lpm.l6}
 Suppose that \eqref{e:moment.eq}  and\eqref{eq.chf} hold. Let 
\begin{equation}
\label{lpm.l6.eq0}S_n(j)=\sum_{i=j}^{j+n-1}X_i, \, j\ge0, \, n\geq 1.
\end{equation}
As $n\to\infty$, conditionally on $E_0$,
\begin{align*}
& \left( n^{-(2-2\alpha)} \left( S_n ( [n^\beta t] ) - n \vep \right),
                 \, t \ge 0\right)\\
& \Rightarrow  \left( (2C_\alpha)^{1/2} B_H(t) + \vep^{-1} C_\alpha \sigma_Z^2T_0 - \vep
               t^{3-2\alpha} , \, t\ge0 \right)
\end{align*}
in finite-dimensional distributions, 
where $(B_H(t):t\ge0)$ is the standard fractional Brownian motion
\eqref{lpm.fbm} with the Hurst exponent $H$ given in
\eqref{lpm.eq.defH}, $C_\alpha$ is the constant defined in \eqref{lpm.defC},  
 and  $T_0$ is a standard exponential random variable independent of
 the fractional Brownian motion. 
\end{lemma}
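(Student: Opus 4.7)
The proof starts from the decomposition
\[
S_n([n^\beta t]) - n\vep = T_n^* + \bigl(S_n([n^\beta t]) - S_n(0)\bigr),
\]
with $T_n^*$ the overshoot from \eqref{lpm.eq.defTn}. For the first piece, Lemma \ref{lpm.l3} together with the asymptotic $\zeta_n \sim \vep/(C_\alpha \sigma_Z^2)\cdot n^{2\alpha-2}$ yields $n^{-(2-2\alpha)} T_n^* \Rightarrow \vep^{-1} C_\alpha \sigma_Z^2 T_0$ conditionally on $E_0$. For the second piece, I would expand both $S_n(0)$ and $S_n([n^\beta t])$ via the representation \eqref{lpm.l0.eq8} and subtract. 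The result splits into a contribution $\Delta_n^A(t) := \sum_{i=-[n^\beta t]}^{-1} A_{i+[n^\beta t]}\, Z_{n-1-i}$ involving only the variables $Z_n,\ldots,Z_{n+[n^\beta t]-1}$ (hence independent of $S_n(0)$ and of $E_0$), plus a sum over $i \ge 0$ with coefficients $(A_{i+[n^\beta t]} - A_{i+[n^\beta t]-n}) - (A_i - A_{i-n})$.

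Next, using \eqref{eq.defyni} I would replace each $Z_{n-1-i}$ in the $i \ge 0$ sum by $Y_{ni}$ plus a centering term. The sum of the centerings reproduces $\mu_n(t)$ of Lemma \ref{l:mu.n}, modulo an $o(n^{2-2\alpha})$ correction coming from the factor $(1+\zeta_n^{-2}\sigma_n^{-2})$. Truncating the $Y_{ni}$-sum to indices in $[[n^\beta\delta],[n^\beta L]-1] \cup [n-[n^\beta t], n-1] \cup [n, n+[n^\beta L]]$ recovers exactly $S_n([n^\beta t], \delta, L)$ of Lemma \ref{lpm.l4}. The remaining $\Delta_n^A(t)$ is a sum of independent zero-mean variables with $\var(\Delta_n^A(t)) \sim K_1 \sigma_Z^2 n^{4-4\alpha} t^{3-2\alpha}$ by Lemma \ref{lpm.l0}, so a Lindeberg CLT gives $n^{-(2-2\alpha)} \Delta_n^A(t) \Rightarrow (1-\alpha)^{-1}\sigma_Z \int_0^t (t-s)^{1-\alpha}\, dB_4(s)$ for an independent standard Brownian motion $B_4$. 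Joint convergence of $T_n^*$, $\Delta_n^A$, $S_n([n^\beta t],\delta, L)$ and $\mu_n(t)$ follows from Lemmas \ref{lpm.l3}, \ref{lpm.l4}, \ref{l:mu.n}, the independence of $\Delta_n^A$ from $E_0$, and a Skorohod-representation argument.

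What remains is the error $R_n(t,\delta,L)$ obtained by restoring the $Y_{ni}$-indices outside the truncation range. I would bound its conditional second moment using Lemma \ref{lpm.newl2}: the uniform bound \eqref{lpm.newl2.eq1} handles the diagonal, while the negative conditional covariance \eqref{lpm.newl2.eq2} shows that the cross terms in $E(R_n^2|E_0)$ only decrease the bound, reducing the estimate to $O(\sum (\text{coef})^2)$. By Lemma \ref{lpm.l0}, this tends to $0$ after first letting $n\to\infty$ and then $\delta\downarrow 0$, $L\uparrow\infty$, validating the exchange of limits.

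The final step identifies the Gaussian limit. Assembling the four independent stochastic integrals and applying \eqref{lpm.l0.eq12} to evaluate variances, the total variance at a single time $t$ collapses to $\sigma_Z^2\cdot 2 C_\alpha\, t^{3-2\alpha}$; at two times $0 \le s \le t$, the same computation telescopes the cross-covariance to $\sigma_Z^2 C_\alpha(s^{2H} + t^{2H} - |s-t|^{2H})$, identifying the Gaussian piece (in law) with $(2C_\alpha)^{1/2}\sigma_Z B_H(\cdot)$ for $H = 3/2-\alpha$. Combining with the deterministic drift $-\vep t^{3-2\alpha}$ from $\mu_\infty$ and the independent shift $\vep^{-1} C_\alpha \sigma_Z^2 T_0$ from the overshoot yields the stated finite-dimensional limit. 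The principal obstacles are the second-moment control of $R_n$ as $\delta\downarrow 0, L\uparrow\infty$ and the algebraic verification that the four-integral covariance indeed reduces to the fractional Brownian covariance.
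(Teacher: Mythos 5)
Your proposal is correct and follows essentially the same route as the paper's proof: the same decomposition into the overshoot $T_n^*$, the independent sum over negative indices, the centred $Y_{ni}$-sum truncated to match Lemma \ref{lpm.l4}, with the truncation error controlled via the non-positive conditional covariances of Lemma \ref{lpm.newl2}, the drift supplied by Lemma \ref{l:mu.n}, and the four independent Brownian integrals assembled into $(2C_\alpha)^{1/2}B_H$ via \eqref{lpm.l0.eq12}. No substantive differences from the paper's argument.
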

\begin{proof}
It follows from \eqref{lpm.newl2.eq2} and the eventual monotonicity of
the sequence $(A_n)$ that there is $i_0\ge 0$ such that for all large
$n$, 
\begin{equation}
\label{lpm.l6.eq1}\sup_{i_0\le i<j}E\left(Y_{ni}Y_{nj}|E_0\right)\le0\,.
\end{equation}
For  fixed $L, t>0$ this and \eqref{lpm.newl2.eq1} imply that 
\begin{align*}
& E\left[\left(\sum_{i=[n^\beta L]}^{n-[n^\beta t]-1}\left(A_{i+[n^\beta
  t]}-A_i\right)Y_{ni}\right)^2\Biggr|E_0\right]\\
  =&O\left(\sum_{i=[n^\beta
  L]}^\infty\left(A_{i+[n^\beta t]}-A_i\right)^2\right)\\
=&O\left(\sum_{j=[n^\beta
   L]}^\infty\left(\left(j+[n^\beta
   t]\right)^{1-\alpha}-j^{1-\alpha}\right)^2\right) \\
   \leq & O\left(n^{4-4\alpha}\int_L^\infty \left[(x+t)^{1-\alpha}-x^{1-\alpha}\right]^2dx\right)\,.
\end{align*}
Therefore, for fixed $t$,
\begin{equation}\label{lpm.l6.eq2}
\lim_{L\to\infty}\limsup_{n\to\infty}E\left[\left(n^{2\alpha-2}\sum_{i=[n^\beta L]}^{n-[n^\beta t]-1}\left(A_{i+[n^\beta t]}-A_i\right)Y_{ni}\right)^2\Biggr|E_0\right]=0\,.
\end{equation}

Since the sequence $(a_n)$ is eventually monotone, we can increase, if
necessary, $i_0$ to guarantee that  $A_{j+k}-A_{j}\le
A_{i+k}-A_i$ for all $i_0\le i\le j$ and $k\ge0$. By
\eqref{lpm.l6.eq1}, for fixed $L,t>0$, large $n$ and $i,j\ge n+[n^\beta
L]$, 
\begin{align*} 
&\left(A_{i+[n^\beta t]}-A_{i+[n^\beta t]-n}-A_i+A_{i-n}\right) \\
&\hskip1in \left(A_{j+[n^\beta t]}-A_{j+[n^\beta  t]-n}-A_j+A_{j-n}\right)E\left(Y_{ni}Y_{nj}\bigr|E_0\right)\le0\,,
\end{align*}
and the same argument as above implies that 
\begin{align} \label{e:miss.lim}
&\lim_{L\to\infty}\limsup_{n\to\infty}E\Biggl[
 \Bigl(n^{2\alpha-2}\\
\notag &\hskip0.1in \sum_{i=n+[n^\beta L]+1}^\infty\left(A_{i+[n^\beta t]}-A_{i+[n^\beta t]-n}-A_i+A_{i-n}\right)Y_{ni}\Bigr)^2\Biggr|E_0\Biggr]=0\,.
\end{align}
Similarly, for a fixed $t>0$, 
\begin{equation} \label{e:miss.lim2}
\lim_{\delta\to0}\limsup_{n\to\infty}E\left[\left(n^{2\alpha-2}\sum_{i=i_0}^{[n^\beta \delta]-1}\left(A_{i+[n^\beta t]}-A_i\right)Y_{ni}\right)^2\Biggr|E_0\right]=0\,,
\end{equation}
and it is elementary that for a fixed $t>0$, 
\begin{equation}
\label{lpm.l6.eq3}\lim_{n\to\infty}E\left[ \left(n^{2\alpha-2}\sum_{i=0}^{i_0-1}\left(A_{i+[n^\beta t]}-A_i\right)Y_{ni}\right)^2\Biggr|E_0\right]=0\,.
\end{equation}

It follows from  \eqref{lpm.l6.eq2}, \eqref{e:miss.lim},
\eqref{e:miss.lim2}, 
 \eqref{lpm.l6.eq3} and Lemma \ref{lpm.l4} that, conditionally on $E_0$,  
\begin{align}\label{lpm.l7.eq1}
&\left[   \zeta_nT_n^*,\left(n^{-(2-2\alpha)}\sum_{i=0}^\infty
\left(A_{i+[n^\beta  t]}-A_{i+[n^\beta t]-n}-A_i+A_{i-n}\right)Y_{ni}, 
\, t\ge0\right)\right]\\
\nonumber&\Rightarrow \left[ T_0,\left( (1-\alpha)^{-1}\sigma_Z \left( \int_0^\infty\left[(s+t)^{1-\alpha}-s^{1-\alpha}\right] dB_1(s)\right. \right.\right.\\
\nonumber&\,\,\,\,\,\, \,\,\,\,
           +\left.\left. \left.\int_0^t(t-s)^{1-\alpha}\,dB_2(s)
           +\int_0^\infty
           \left[(s+t)^{1-\alpha}-s^{1-\alpha}\right]dB_3(s)\right),
           \, t\ge0\right)\right]\,,
\end{align}
in finite-dimensional distributions,  as $n\to\infty$. Furthermore,
one can easily check the Lindeberg conditions of the central limit
theorem to see that 
\begin{align}
\label{lpm.l7.eq2}&\left(n^{-(2-2\alpha)}\sum_{i=-[n^\beta t]}^{-1}
                    A_{i+[n^\beta t]} Z_{n-1-i}, \, t\ge0\right)\\
\notag & \Rightarrow
           \left((1-\alpha)^{-1}\sigma_Z\int_0^t(t-s)^{1-\alpha}\, dB_0(s),
           \, t\ge0\right)
\end{align}
in finite-dimensional distributions, as $n\to\infty$, 
where $B_0$ is a standard Brownian motion. Note that the random
variables in the left hand side of \eqref{lpm.l7.eq2} are independent
of the the random
variables in the left hand side of \eqref{lpm.l7.eq1} and, in
particular, independent of $E_0$.

Using \eqref{lpm.l0.eq8} we conclude by \eqref{lpm.l7.eq1} and
\eqref{lpm.l7.eq2} that,  in the notation of Lemma \ref{l:mu.n},
conditionally on $E_0$,  
\begin{align*}\label{lpm.l7.eq3}
&\left[   \zeta_nT_n^*,\left(n^{-(2-2\alpha)}\left(S_n([n^\beta
                                    t])-S_n\right)-\bigl( 1+\zeta_n^{-2}\sigma_n^{-2}\bigr)\mu_n(t), \, t\ge0\right)\right]\\
\nonumber&\Rightarrow \left[ T_0,\left( (1-\alpha)^{-1}\sigma_Z \left( 
\int_0^t(t-s)^{1-\alpha}\,dB_0(s)\right.\right.\right.\\
\nonumber&\,\,\,\,\,\, \,\,\,\,
+\int_0^\infty\left[(s+t)^{1-\alpha}-s^{1-\alpha}\right] dB_1(s)\\
\nonumber&\,\,\,\,\,\, \,\,\,\,
           +\left.\left. \left.\int_0^t(t-s)^{1-\alpha}\,dB_2(s)
           +\int_0^\infty
           \left[(s+t)^{1-\alpha}-s^{1-\alpha}\right]dB_3(s)\right),
           \, t\ge0\right)\right]\\
&\eid\left[
                                      T_0,\left(2^{1/2}(1-\alpha)^{-1}\sigma_Z\int_{-\infty}^\infty \left[(t-s)_+^{1-\alpha}-(-s)_+^{1-\alpha}\right]dW(s), \,
t\ge0\right)\right]\nonumber
\end{align*}
in finite-dimensional distributions as $n\to\infty$, where at the
intermediate step the four standard Brownian motions, $B_0,B_1,B_2$
and $B_3$ are independent (and independent of $T_0$), and in the final
expression $(W(s), \, s\in\bbr)$ is a two-sided standard Brownian motion,
independent of $T_0$. By  \eqref{lpm.l0.eq12}, this can be restated as
saying that, conditionally on $E_0$, 
\begin{align*}
&\left[   \zeta_nT_n^*,\left(n^{-(2-2\alpha)}
\left(S_n([n^\beta t])-S_n\right)-\mu_n(t), \, t\ge0\right)\right]\\
&\Rightarrow\left[    T_0,\left( (2C_\alpha)^{1/2} B_H(t), \,  t\ge0 \right)\right],
\end{align*}
and by Lemma
\ref{l:mu.n} also
\begin{align*}
&\left[   \zeta_nT_n^*,\left(n^{-(2-2\alpha)}
\left(S_n([n^\beta t])-S_n\right), \, t\ge0\right)\right]\\
&\Rightarrow\left[    T_0,\left( (2C_\alpha)^{1/2} B_H(t)-\vep t^{3-2\alpha}, \,  t\ge0 \right)\right]
\end{align*}
in finite-dimensional distributions, as $n\to\infty$. Since
\begin{align*}
  n^{-(2-2\alpha)} \bigl(S_n ( [n^\beta t] ) - n \vep\bigr)
= n^{-(2-2\alpha)}\bigl(S_n([n^\beta t])-S_n\bigr) +
  \bigl(n^{2\alpha-2}\zeta_n^{-1}\bigr)\zeta_nT_n*, 
\end{align*}
the claim of the lemma follows from the definition \eqref{eq.defzetan}
of $\zeta_n$ and \eqref{lpm.l0.eq3}.  
\end{proof}

Now we are in a position to prove Theorem \ref{lpm.t1}.
\begin{proof}[Proof of Theorem \ref{lpm.t1}]  
We will prove that 
\begin{equation}
\label{t1.eq1}\biggl\{P\left[\left( n^{-(2-2\alpha)} \left( S_n (
        [n^\beta t] ) - n \vep \right), \, 
 0\le t<\infty\right)\in\cdot\Bigr|E_0\right], \, n\ge1\biggr\}
\end{equation}
is a tight family of probability measures on $D([0,\infty))$ equipped
with the Skorohod $J_1$ topology. Assuming for a moment that this is
true, it would follow from Lemma \ref{lpm.l6} that, conditionally on $E_0$,
\begin{align*}
& \left( n^{-(2-2\alpha)} \left( S_n ( [n^\beta t] ) - n \vep \right) : t \ge 0\right)\\
& \Rightarrow  \left( (2C_\alpha)^{1/2} B_H(t) + \vep^{-1} C_\alpha\sigma_Z^2 T_0 - \vep t^{3-2\alpha} : t\ge0 \right)
\end{align*}
weakly in $D([0,\infty))$, as $n\to\infty$. Since the  functional
  $  \bx\mapsto \inf\{ t\geq 0:\, x(t)\leq 0\}$  on $D([0,\infty))$
  is, clearly, a.s. continuous with 
respect to the law induced on that space by the limiting process, the
continuous mapping theorem would imply that,  conditionally on $E_0$,
 \begin{align*}
\nonumber n^{-\beta} I_n(\vep)&= \inf\left\{t\ge0:n^{-(2-2\alpha)}\left( S_n([n^\beta t])-n\vep \right) \le0\right\}\\
&\Rightarrow\inf\left\{t\ge0: (2C_\alpha)^{1/2} B_H(t) 
+ \vep^{-1}   C_\alpha \sigma_Z^2T_0 - \vep t^{3-2\alpha} \le   0\right\} =\tau_\vep
\end{align*}
as $n\to\infty$.  Therefore, establishing tightness of the family
\eqref{t1.eq1} suffices to complete the proof of Theorem
\ref{lpm.t1}, and by Lemma \ref{l:mu.n}  it is enough to prove that the
family
\begin{equation}
\label{t1.eq1aa}\biggl\{P\left[\left( n^{-(2-2\alpha)} \left( S_n (
        [n^\beta t] ) - n \vep \right)-\mu_n(t), \, 
 0\le t<\infty\right)\in\cdot\Bigr|E_0\right], \, n\ge1\biggr\}
\end{equation}
is a tight family of probability measures on $D([0,\infty))$.

We have to prove tightness of the restriction of the family
\eqref{t1.eq1aa} to the interval $[0,L]$ for any $L>0$, so fix $L$. We
start by showing that  
\begin{align}\notag 
&E\left[\left(S_n\left([n^\beta t]\right)-n^{2\alpha-2}\mu_n(t)
-S_n\left([n^\beta s]\right)+n^{2\alpha-2}\mu_n(s)\right)^2\biggr|E_0\right]\\
\label{lpm.al8.eq1}&=O\left(\left([n^\beta t]-[n^\beta s]\right)^{3-2\alpha}\right)\,,
\end{align}
uniformly for $0\le s\le t\le L$.   We write 
\begin{align*}
&S_n\left([n^\beta t]\right)-n^{2\alpha-2}\mu_n(t)
-S_n\left([n^\beta s]\right)+n^{2\alpha-2}\mu_n(s) \\
&= \sum_{i=-[n^\beta t]}^{-1} \left(A_{i+[n^\beta t]} -A_{i+[n^\beta
  s]}\right)Z_{n-i-1} \\
 &+ \sum_{i=0}^\infty\left(A_{i+[n^\beta t]}
-A_{i+[n^\beta   t]-n}-A_{i+[n^\beta   s]}
+A_{i+[n^\beta   s]-n}\right)Y_{ni}. 
\end{align*}
Since $Z_n,Z_{n+1},\ldots$ are independent of $E_0$, by  Lemma
\ref{lpm.newl2}, 
\begin{align*}
& E\left[\left(S_n\left([n^\beta t]\right)-n^{2\alpha-2}\mu_n(t)-S_n\left([n^\beta s]\right)+n^{2\alpha-2}\mu_n(s)\right)^2\biggr|E_0\right]\\
 =& O\Biggl[\sum_{j=0}^{[n^\beta t]-1}\left(A_j-A_{j+[n^\beta
           s]-[n^\beta t]}\right)^2 \\
&\hskip1in + \sum_{i=0}^\infty\left(A_{i+[n^\beta
           t]}-A_{i+[n^\beta t]-n}  
-A_{i+[n^\beta s]}+A_{i+[n^\beta
           s]-n}\right)^2\Biggr] \\
 =& O\left(\left([n^\beta t]-[n^\beta
           s]\right)^{3-2\alpha}\right) 
\end{align*}
 uniformly for $0\le s\le t\le L$ by
\eqref{lpm.newl0.eq2} with $\kappa=2$, and \eqref{lpm.al8.eq1}
follows.

Let now $0\le r\le s\le t\le L$. If $t-r\leq n^{-\beta}$, then 
\begin{align*}
&E\Biggl[\left|S_n([n^\beta s])-\mu_n(s)-S_n([n^\beta r])+\mu_n(r)\right|\\
&\hskip1in \left|S_n([n^\beta t])-\mu_n(t)-S_n([n^\beta s])+\mu_n(s)\right|\Bigr|E_0\Biggr]
\end{align*}
vanishes. On the other hand, if $t-r> n^{-\beta}$, then by
\eqref{lpm.al8.eq1} and the Cauchy-Schwarz inequality, the conditional
expectation can be bounded by 
\begin{align*}
 O\left(\left([n^\beta t]-[n^\beta r]\right)^{3-2\alpha}\right)
= O\left(n^{4-4\alpha}(t-r)^{3-2\alpha}\right)
\end{align*}
uniformly for $0\le r\le s\le t\le L$.  Since $3-2\alpha>1$, the
required tightness of the family in \eqref{t1.eq1aa} follows, which completes the proof of Theorem \ref{lpm.t1}.
\end{proof}

\section{Some useful facts} \label{sec:useful.f}

We collect in this section for easy reference a number of known or
easily derivable results.

The following integral evaluation follows from (2), (6) and (51) in
\cite{pickard:2011}. If $H\in(0,1)$, $H\not=1/2$, then
\begin{equation} \label{e:pickard}
\int_0^\infty \left[x^{H-1/2}-(x-1)_+^{H-1/2}\right]^2dx 
 =\frac{\cos(\pi H)\Gamma(2-2H)}{\pi H(1-2H)}{\Gamma(H+1/2)}^2\,.
\end{equation}

\bigskip

Next, we will need the following version of  the Berry-Essen
theorem valid for independent not necessarily identically
distributed summands; see \cite{batirov:manevich:nagaev:1977}.

Let $X_1,\ldots,X_n$ be independent zero mean random variables with
finite third moments. Denote 
\[
A=\sum_{i=1}^nE|X_i^3|, \ B=\sqrt{\sum_{i=1}^nE(X_i^2)}. 
\]
Assuming $B>0$ we have 
\begin{equation} \label{e:BE.nonid}
\left|P\left(\sum_{i=1}^nX_i\le Bz\right)-\Phi(z)\right|\le
C_uAB^{-3}, \, z\in\bbr\,,
\end{equation}
with $C_u$ a universal constant, and $\Phi$ the standard
normal CDF. The fact that the constant is universal means that
\eqref{e:BE.nonid} remains valid for $n=\infty$ as long the series in
the left hand side converges and $A,B$ are finite.

\bigskip

The following generalization of the Riemann-Lebesgue lemma can be
proven in the same way as the original statement. 
If $f:\bbr\to\bbr$ is a measurable function such that for some $\delta>0$,
\[
\int_{-\infty}^\infty e^{\theta x}|f(x)|dx<\infty\ \text{ for all }\theta\in[-\delta,\delta]\,,
\]
then
\begin{equation} \label{f.rl} 
\lim_{t\to\infty}\sup_{|\theta|\le\delta}\left|\int_{-\infty}^\infty
  e^{(\theta+it) x}f(x)\, dx\right|=0\,.
\end{equation} 

\bigskip

We will use a simple bound on the characteristic function $\phi$ of 
a random variable $X$ with a finite third moment. Let  $X'$ be an
independent copy of $X$ and $Y=X-X'$. Using the bound $\cos t \leq
1-t^2/2+|t|^3/6$ for $t\in\bbr$, we have 
 \begin{align*}
Ee^{itY}&\le1-t^2E(Y^2)/2+|t|^3E|Y|^3/6\\
&\le 1-t^2\Var(X)+4|t|^3E|X|^3/3\,.
\end{align*}
This implies that 
\begin{equation} \label{f.chf} 
|\phi(t)|\le\left(1-t^2\Var(X)+4|t|^3E|X|^3/3\right)^{1/2},\, t\in\bbr\,.
\end{equation}

\section*{Acknowledgment} AC is thankful to Rudra Sarkar for helpful discussions.


\end{document}